\documentclass[a4paper]{amsart}
\usepackage[utf8]{inputenc}
\usepackage[T1]{fontenc}
\usepackage{lmodern}
\usepackage{amsmath, amsthm, amssymb, amsfonts}
\usepackage[all]{xy}
\usepackage{nicefrac,mathtools}
\usepackage[shortlabels, inline]{enumitem}
\usepackage{microtype}
\usepackage{hyperref}
\usepackage{graphicx}
\usepackage[all]{xy}
\usepackage{xcolor}
\usepackage{tikz-cd}
\usepackage[lite]{amsrefs}
\usepackage{thmtools}

\numberwithin{equation}{section}
\theoremstyle{plain}
\newtheorem{theorem}[equation]{Theorem}

\newtheorem{lemma}[equation]{Lemma}

\newtheorem{proposition}[equation]{Proposition}

\newtheorem{corollary}[equation]{Corollary}

\theoremstyle{definition}
\newtheorem{definition}[equation]{Definition}

\theoremstyle{remark} \newtheorem{remark}[equation]{Remark}
 \newtheorem*{remark*}{Remark}
\newtheorem*{remarks*}{Remark} \newtheorem{example}[equation]{Example}

\newcommand*{\Contc}{\mathrm{C_c}}
\newcommand*{\Cc}{\mathrm{C_c}}
\newcommand{\Contz}{\mathrm{C_0}} \newcommand{\base}[1][G]{{#1}^{(0)}}
\newcommand{\dd}{\mathrm{d}} 
 \newcommand{\supp}{{\textup{supp}}}

\newcommand*{\Hilm}[1][H]{\mathcal #1}
\newcommand{\Bound}{\mathbb B}
\newcommand*{\Cst}{\mathrm{C}^*}
\newcommand{\A}{\mathcal{A}}
\newcommand{\KMS}[1][\beta]{\textup{KMS}\textsubscript{\(#1\)}}
\newcommand{\Ind}{\mathrm{Ind}}

\newcommand*{\inpro}[2]{\langle#1, #2\rangle}
\newcommand*{\Linpro}[2]{\langle\!\langle#1, #2\rangle\!\rangle}

\newcommand{\norm}[1]{\lvert\!\lvert #1\rvert\!\rvert}
\newcommand{\C}{\mathbb{C}}
\newcommand{\R}{\mathbb{R}}
\newcommand{\Z}{\mathbb{Z}}
\newcommand{\N}{\mathbb{N}}
\newcommand{\e}{\mathrm{e}}
\newcommand{\Mat}{\mathrm{M}}

\newcommand*{\nb}{\nobreakdash}
\newcommand*{\defeq}{\mathrel{\vcentcolon=}}
\newcommand{\etale}{{\'e}tale}

\title[Ground states on Fell bundle \(\mathrm{C}^*\)-algebras]{Ground states on the \(\mathrm{C}^*\)-algebras of Fell bundles over \'etale groupoids}

\author{Md Amir Hossain}

\address{Theoretical Statistics and Mathematics Unit, Indian Statistical Institute, Delhi Centre,  7, S. J. S. Sansanwal Marg, New Delhi, 110016, India}
\email{mdamirhossain18@gmail.com}

\keywords{Ground states, Fell bundles \(\Cst\)-algebras, Crystallization, Deaconu--Renault groupoids, \(k\)-graphs, groupoid crossed products}
\thanks{\emph{2020 Mathematics Subject classification.} 46L55, 22A22, 46L05, }

\begin{document}
\begin{abstract}
Let \(p\colon \mathcal{A}\to G\) be a Fell bundle over a locally compact Hausdorff second countable \'etale groupoid \(G\) and let \(\mathrm{C}^*(G; \mathcal{A})\) be the associated Fell bundle \(\mathrm{C}^*\)-algebra. Suppose \(\sigma^c\) is the real dynamics on \(\mathrm{C}^*(G;\mathcal{A})\) induced by a real-valued \(1\)-cocycle \(c\). We establish an affine homeomorphism between the \(\sigma^c\)-ground state space of \(\mathrm{C}^*(G;\mathcal{A})\) and the state space of \(\mathrm{C}^*(G(Z); \mathcal{A}|_{G(Z)})\), where \(Z\) is the boundary set of \(c\) and \(G(Z)\) is the boundary groupoid. In particular, \(\mathrm{C}^*(G; \mathcal{A})\) admits \(\sigma^c\)-ground states if and only if \(Z\neq \emptyset\). We further investigate the relation between ground states and KMS\(_{\infty}\) states and give a crystallization interpretation of the boundary \(\mathrm{C}^*\)-algebra \(\mathrm{C}^*(G(Z); \mathcal{A}|_{G(Z)})\) when the cocycle is locally constant. Finally, we apply our results to several classes of examples, including twisted \(\mathrm{C}^*\)-algebras of Deaconu--Renault groupoids and twisted higher-rank graph \(\mathrm{C}^*\)-algebras, obtaining explicit descriptions of their ground states.  
\end{abstract}

\maketitle

\section{Introduction}

Ground states of \(\Cst\)\nb-dynamical systems have been extensively studied in connection with the zero-temperature behaviour of the equilibrium states. 
Although every weak\(^*\)-limit of
KMS\(_\beta\) states as \(\beta\) goes to infinity is a ground state, in general  the class of ground states is strictly larger. Therefore, a natural question is to determine the ground state space directly from the underlying \(\Cst\)\nb-dynamical system. This question has been investigated for several classes of \(\Cst\)\nb-algebras. For example, Thomsen~\cite{Thomsen-Ground-states-for-gen-gauge-action-UHF}
studied ground states of a UHF algebra for generalized gauge actions and
identified ground states with the state space of a unital AF-algebra.
A particularly relevant result for the present work is due to
Laca--Larsen--Neshveyev~\cite{Laca-Larsen-Neshveyev-Ground-states}.
They characterized the ground state space of the \(\Cst\)\nb-algebra of an \'etale groupoid~\(G\), for the real dynamics induced by a continuous
real-valued \(1\)\nb-cocycle \(c\), in terms of the state space of the
\(\Cst\)\nb-algebra of the associated boundary groupoid \(G(Z)\). Their
result shows that the geometry of the boundary set \(Z\) determined by the cocycle \(c\) captures the entire ground state space. 
They applied this result to study ground states on certain arithmetic subalgebras of Hecke \(\Cst\)\nb-algebras.

Motivated by this result (\cite{Laca-Larsen-Neshveyev-Ground-states}), we extend the boundary groupoid description of
ground states to \(\Cst\)\nb-algebras arising from a Fell bundle \(p\colon \A\to G\) over
an \'etale groupoid \(G\).
 Our main result (Theorem~\ref{thm-main-ground-st}) shows that the ground state space is
determined by the restricted Fell bundle over the boundary groupoid \(G(Z)\defeq r^{-1}(Z)\cap s^{-1}(Z).\)
More precisely, assuming that \(G(Z)\) is \'etale, we prove that there is an
affine homeomorphism between the state space of
\(\Cst(G(Z);\A|_{G(Z)})\) and the \(\sigma^c\)\nb-ground state space of
\(\Cst(G;\A)\). Under this correspondence, a state \(\varphi\) on
\(\Cst(G(Z);\A|_{G(Z)})\) gives the unique \(\sigma^c\)-ground state
\(\psi_\varphi\) satisfying
\[
\psi_\varphi(f)=\varphi(f|_{G(Z)}),
\]
for \(f\in\Cc(G;\A).\)
In particular, if \(Z=\emptyset\), then \(\Cst(G;\A)\) admits no
\(\sigma^c\)\nb-ground states. Thus, the boundary groupoid provides a
canonical ``zero-energy'' part of the Fell bundle which completely
determines the ground state space.

Our proof is based on two main ideas. For the construction of ground
states from states on \(\Cst(G(Z);\A|_{G(Z)})\), we use the imprimitivity
bimodule associated with an equivalence of Fell bundles between
\(p\colon\A|_{G(Z)}\to G(Z)\) and a Fell bundle over the imprimitivity
groupoid \(G(Z)^G\) (see Section~\ref{sec-induc-state}). This gives an
induction procedure which allows us to construct states on \(\Cst(G;\A)\) form states on \(\Cst(G(Z); \A|_{G(Z)})\)
(see Theorem~\ref{thm-ind-state}). 
For the
converse direction, we use the Muhly--Williams disintegration theorem for
Fell bundles~\cite{Muhly-Williams2008Equivalence-and-disinte-thm-Fell-bundle}. The ground state condition forces the cyclic vector in the
disintegrated representation to be supported on the boundary set \(Z\).
Consequently, the representation restricts to the Fell bundle over
\(G(Z)\) produces the state which determines the original ground state.

When the \(1\)-cocycle \(c\) is locally constant, the above description of ground states admits another interpretation in terms of crystallization. For an almost periodic \(\Cst\)\nb-dynamical system, Laca--Neshveyev--Yamashita~\cite{Laca-Neshveyev-Yamashita-Crystallization} introduced the crystal,
which is a quotient of the fixed-point algebra and whose state space
parametrizes the ground states. We show that, for the dynamics on a Fell
bundle \(\Cst\)\nb-algebra induced by a locally constant \(1\)-cocycle, the crystal is canonically isomorphic to \(\Cst(G(Z);\A|_{G(Z)}).\)
Thus, in this setting, the restricted Fell bundle over the boundary groupoid appearing in our main theorem has an intrinsic \(\Cst\)\nb-algebraic interpretation in terms of crystal of the dynamical system.

A smaller and well-behaved class of ground states, called KMS\(_\infty\) states, was introduced by Connes--Marcolli (\cite{Connes-Marcolli-2006-QSM_Q-lattice}) in their study
of quantum statistical mechanics. These states may be viewed as
weak\(^*\)-limits of KMS\(_\beta\) states as the temperature goes to zero, and hence represent ground states arising as zero-temperature limits of equilibrium states. KMS\(_\infty\) states subsequently have been studied in several classes of \(\Cst\)\nb-algebras, including graph
\(\Cst\)\nb-algebras~\cite{Thomsen-KMS_weight-Graph-alg-Ground-state}, \(\Cst\)\nb-algebras associated to Cayley graphs~\cite{Christensen-Klaus2018Cayley-graphs}, and simple unital AF-algebras~\cite{Thomsen-Equi-states-temp-goes-to-zero}. 
Although every KMS\(_\infty\) state is a ground state, the converse need not be true. Thus, once the ground states have been characterized, it is
natural to ask which of them arise as zero-temperature limits of KMS states.

In the present setting, our description of ground states allows us to give a partial answer to this question. Assume that \(c^{-1}(0)\) is an
\'etale groupoid. We show that if \(\varphi\) is a state on
\(\Cst(G(Z);\A|_{G(Z)})\) and the corresponding ground state
\(\psi_{\varphi}\) is a KMS\(_\infty\) state, then \(\varphi\) must be
tracial. Consequently, whenever \(\Cst(G(Z);\A|_{G(Z)})\) admits a non-tracial state, the corresponding
ground state is not a KMS\(_\infty\) state. 

We conclude with several applications illustrating the main result. First, we consider twisted \(\Cst\)\nb-algebras of Deaconu--Renault groupoids. For the
canonical gauge cocycle, we show that there are no
\(\sigma^c\)\nb-ground states on \(\Cst(G_T;\omega)\), where \(\omega\) is a \(2\)-cocycle. We then consider cocycles arising from potentials and
describe the corresponding ground states for the twisted
\(\Cst\)-algebra \(\Cst(G_T;\omega)\) (see Corollary~\ref{coro-ground-st-DR-gpd}).

We next apply our result to twisted \(\Cst\)\nb-algebras of higher-rank graphs. For the \emph{preferred dynamics} introduced in~\cite{an-Huef-Laca-Raeburn-Sims-KMS-states-k-graph-periodicity}, we obtain a particularly simple characterization of the ground states. Let
\(\Lambda\) be a strongly connected row-finite \(k\)-graph with no source. For \(i=1,\ldots,k\), let
\(A_i\in\Mat_{\Lambda^0}\) be the matrix given by \(A_i(v,u)=|v\Lambda^{e_i}u|\) and let \(\rho(A_i)\) denotes the spectral radius of \(\rho(A_i)\).
Then the \(\sigma^c\)\nb-ground state space of the twisted
\(\Cst\)\nb-algebra \(\Cst(\Lambda;\omega)\) is nonempty precisely when
\(\rho(A_i)=1\) for all \(i=1,2,\cdots,k\).
In this case, every state on \(\Cst(\Lambda;\omega)\) is a
\(\sigma^c\)\nb-ground state. On the other hand, if
\(\rho(A_i)\neq 1\) for at least one \(i\), then there are no
\(\sigma^c\)\nb-ground states (see Theorem~\ref{thm-ground-k-graph}). 
 
\smallskip

\paragraph{\itshape Structure of the article.} 
Section~\ref{sec-prelim} contains the
necessary background on groupoid \(\Cst\)\nb-algebras, Fell bundles, the
representation theory of Fell bundles, and ground states.
In Section~\ref{sec-induc-state}, we construct a \(\Cst\)\nb-correspondence
from \(\Cst(G;\A)\) to
\(\Cst(G(Z);\A|_{G(Z)})\) (see Theorem~\ref{thm-C-st-corr}) and use it to induce states
from \(\Cst(G(Z);\A|_{G(Z)})\) to \(\Cst(G;\A)\)
(see Theorem~\ref{thm-ind-state}). Section~\ref{sec-main-ground-st} contains the main
result of the paper, which characterize the
\(\sigma^c\)-ground states on \(\Cst(G;\A)\) in terms of the states on
\(\Cst(G(Z);\A|_{G(Z)})\) (see Theorem~\ref{thm-main-ground-st}).
In Section~\ref{sec-crystal}, we study the crystallization of Fell bundle
\(\Cst\)\nb-algebras and identify the crystal with the \(\Cst\)\nb-algebra
associated to the Fell bundle over the boundary groupoid. Finally, in Section~\ref{sec-application}, we apply
our main result to several classes of examples, including twisted groupoid
\(\Cst\)\nb-algebras, twisted \(\Cst\)\nb-algebras of Deaconu--Renault groupoids,
twisted \(\Cst\)\nb-algebras of higher-rank graphs, and groupoid crossed
products.

\section{Preliminaries}
\label{sec-prelim}

\subsection{Groupoid \(\Cst\)-algebras}
We refer the reader to~\cite{Renault1980A-gpd-appr-to-cst-alg, Williams2019A-toolkit-for-gpd-alg} and~\cite{Sims-Szabo-Williams2020Book-Operator-alg-Dynamical-system-gpd-crossed-prod-Rokhlin-dim} for the standard materials for groupoids and their \(\Cst\)\nb-algebras. A groupoid \(G\) is a small category with every arrow is invartable. The unit space of the groupoid is denoted by \(\base\) and the set of all composable pairs is denoted by \(G^{(2)}\). The multiplication is a map from \(G^{(2)} \to G\). The range and source maps \(r,s \colon G\rightrightarrows \base\) are given by \(r(\gamma) = \gamma^{-1}\gamma\) and \(s(\gamma) = \gamma \gamma^{-1}\). The groupoid \(G\) is called \emph{locally compact Hausdorff} if \(G\) is a locally compact Hausdorff topological space such that all the structure maps are continuous. The groupoid \(G\) is called \emph{\'etale} if the range (or equivalently, the source) map is a local homeomorphism. An open set \(U\subset G\) is called \emph{bisection} if \(r(U), s(U)\subseteq \base\) are open and \(r|_U\colon U\to r(U)\) and \(s|_{U} \colon U\to s(U)\) are homeomorphisms. An \'etale groupoid has a basis consisting of bisections.

For a unit \(x\in \base\), \(G^x\defeq r^{-1}(x)\) and \(G_x\defeq s^{-1}(x)\) are called the range and source fibres, respectively. The intersection \(G^x_x\defeq G_x\cap G^x\) is a group, called the isotropy group at \(x\). For a subset \(Y\subset \base\), we denote the set \(\{\gamma \in G: s(\gamma) \in Y\}\) by \(G_Y\) and \(\{\gamma \in G: r(\gamma) \in Y\}\) by \(G^Y\). The subgroupoid \(G(Y)\defeq \{\gamma \in G: s(\gamma), r(\gamma) \in Y\}\) is the restriction of \(G\) on \(Y\).

Let \(\mu\) be a probability measure on \(\base\). Then \(\mu\) induces two measures \(\nu\) and \(\nu^{-1}\) on \(G\) given by 
\[
 \nu(f) =\int_{\base} \sum_{\gamma\in G^x} f(\gamma) \dd(\mu)(x) \quad \textup{and} \quad  \nu^{-1}(f) =\int_{\base} \sum_{\gamma\in G_x} f(\gamma) \dd(\mu)(x)
\]  
for \(f\in \Cc(G)\). The measure \(\mu\) is called \emph{quasi-invariant} if \(\nu\) and \(\nu^{-1}\) are equivalent. The Radon--Nikodym derivative \(\frac{\dd\nu}{\dd\nu^{-1}}\) is a \(1\)\nb-cocycle on \(G\), called the modular function (see~\cite[Page 23]{Renault1980A-gpd-appr-to-cst-alg}).

Let \(G\) be an \'etale groupoid. A \(2\)-cocycle \(\omega\) is a continuous map \(\omega \colon G^{(2)} \to \mathbb{T}\) satisfying
\[
\omega(\gamma, s(\gamma)) =\omega(r(\gamma), \gamma) = 1 \quad \textup{and} \quad \omega(\alpha, \beta\gamma) \omega (\beta, \gamma) = \omega(\alpha, \beta) \omega(\alpha\beta, \gamma)
\]
for all composable triples \((\alpha, \beta, \gamma)\). The set of all \(2\)-cocycle is denoted by \(Z^2(G; \mathbb{T})\). Fix \(\omega \in Z^2(G;\mathbb{T})\). The collection of all compactly supported complex valued functions on \(G\) is denoted by \(\Cc(G;\omega)\). Define convolution and involution on \(\Cc(G;\omega)\) by 
\[
 f*g(\gamma) = \sum_{\eta \in G^{r(\gamma)}} \omega(\eta, \eta^{-1}\gamma)f(\eta)g(\eta^{-1}\gamma) \quad \textup{and} \quad f^*(\gamma) = \overline{\omega(\gamma, \gamma^{-1})f(\gamma^{-1})}.
\]
 The enveloping \(\Cst\)\nb-algebra of \(\Cc(G;\omega)\) is called the \emph{twisted groupoid \(\Cst\)-algebra} and denoted by \(\Cst(G;\omega)\).
\subsection{Fell bundle \(\Cst\)-algebras}
We recall briefly the definition of upper semicontinuous bundle of Banach spaces over a locally compact Hausdorff space from~\cite[Appendix C]{Williams2007Crossed-Prod-Book} and~\cite{Fell1988Representation-of-Star-Alg-Banach-bundles}. An \emph{upper semicontinuous bundle of Banach spaces} (or \emph{upper semicontinuous Banach bundle}) is topological space \(\A\) together with an open map \(p\colon \A \to X\), \(\A\) is called the total space and \(X\) is the base space. For each \(x\in X\), the fibre \(\A_x\defeq p^{-1}(x)\) has a complex Banach space structure such that the the function \(\A\to \R\) by \(a\mapsto \norm{a}\) is supper semicontinuous and the scalar multiplication \(\C\times \A \to \A\) is continuous.

In this article, we only consider upper semicontinuous bundle over a locally compact Hausdorff second countable space \(X\).
For such a bundle is said to have \emph{enough sections} if for any \(x\in X\) and \(a\in \A_x\), there exists a continuous compactly supported section \(f\) such that \(f(x) =a\). If the base space \(X\) is locally compact Hausdorff then the bundle \(\A\) have enough sections by~\cite{Hofmann1977Bundles-and-sheaves-equi-in-Cat-Ban}.

For an upper semicontinuous Banach bundle \(p\colon \A\to X\), the collection of continuous compactly supported sections and denoted by 
\(\Cc(X;\A)\) and the collection of continuous sections that vanishes at infinity is denoted by \(\Contz(X; \A)\). Recall the definition of upper semicontinuous bundle of \(\Cst\)\nb-algebras and \(\Contz(X)\)-algebras from~\cite[Appendix C]{Williams2007Crossed-Prod-Book}. Recall from~\cite[Theorem C.26]{Williams2007Crossed-Prod-Book} that there is an one-to-one correspondence between the \(\Contz(X)\)-algebras and the  section algebras of upper semicontinuous bundle of \(\Cst\)\nb-algebras over \(X\). The fibre of a \(\Contz(X)\)\nb-algebra \(A\) at \(x\in X\) is denoted by \(A(x)\) or \(\A_x\) where \(\A\) is the bundle associated to \(A\).

Let \(G\) be a locally compact Hausdorff \'etale groupoid and \(p\colon\A\to G\) an upper semicontinuous bundle of Banach spaces. Let 
\[
 \A^{(2)}\defeq \{(a,b)\in \A\times\A : s(p(a)) = r(p(b))\}.
\]

\begin{definition}[Fell bundle~\cite{Muhly-Williams2008Equivalence-and-disinte-thm-Fell-bundle}]
	\label{def:fell-bundle}
	A \emph{Fell bundle} over an \etale\ groupoid~\(G\) is an upper
	semicontinuous bundle of Banach spaces \(p\colon \A\to G\) equipped
	with a continuous \emph{multiplication}  from \( \A^{(2)}\to \A, (a,b) \mapsto ab\)
	and an \emph{involution} from \(\A\to \A, a\mapsto a^*\) for \(a\in \A\)
	which satisfy the following axioms:
	\begin{enumerate}[leftmargin = *]
		\item\label{item:Fell-1} \(p(ab) =p(a)p(b)\) for all
		\((a,b) \in \A^{(2)}\);
		\item\label{item:Fell-2} \(p(a^*)=p(a)^{-1}\)
		
		\item \label{item:Fell-4} for each \(x\in \base\), \(\A_x\) is a \(\Cst\)\nb-algebra with respect to the inherited
		multiplication and involution on \(\A_x\);
		
		\item \label{item:Fell-5} for \(\gamma\in G\), \(\A_\gamma\) is a \(\A_{r(\gamma)}\)-\(\A_{s(\gamma)}\)-imprimitivity bimodule equipped
		with the inherited actions and inner products given by
		\[
		{}_{\A_{r(\gamma)}}\Linpro{a}{b}\defeq ab^* \quad \textup{and} \quad \inpro{a}{b}_{\A_{s(\gamma)}} \defeq a^*b.
		\]
\end{enumerate}
\end{definition}

Let \(p\colon \A \to G\) be a Fell bundle over a locally compact Hausdorff \etale\ groupoid~\(G\). We assume the Fell bundle is \emph{saturated} in the sense that 
\[
\A_{\gamma\A_{\eta}}\defeq \textup{span}\{ab: a\in \A_{\gamma} \textup{ and } b\in \A_{\eta}\}
\]
 is dense in \(\A_{\gamma\eta}\). Let \(\Cc(G;\A)\) denote the \(^*\)-algebra with the following convolution and involution operations:
\[
f*g(\gamma) = \sum_{\alpha\beta=\gamma} f(\alpha)g(\beta) \quad \text{and} \quad f^*(\gamma) = f(\gamma^{-1})^*
\]
where \(f, g \in \Contc(G;\A)\).
Define \(I\)\nb-norm on \(\Cc(G;\A)\) by 
\[
\norm{f}_I =\max\Big\{\sup_{x\in \base} \sum_{\gamma\in G^x} \norm{f(\gamma)}, \sup_{x\in \base} \sum_{\gamma\in G_x} \norm{f(\gamma)} \Big\}.
\]
A \emph{representation} of \(\Cc(G;\A)\) is a \(^*\)\nb-homomorphism \(\pi\colon \Cc(G; \A) \to \Bound(H)\). The representation \(\pi\) is called \emph{\(I\)\nb-norm bounded} if \(\norm{\pi(f)} \leq \norm{f}_I\) for all \(f\in \Cc(G;\A)\). The \emph{universal norm} on \(\Cc(G;\A)\) is defined by 
\[
 \norm{f} \defeq \sup\{\norm{\pi(f)} : \pi \textup{ is a \(I\)-norm bounded representation}\}.
\]
The completion of \(\Cc(G;\A)\) with respect to the universal norm is called the \emph{full \(\Cst\)-algebra} of the Fell bundle and denoted by \(\Cst(G;\A)\).

For a subset \(K \subseteq G\), we denote the restriction of the Fell bundle \(\A\) over \(K\) as \(\A|_{K}\). If \(K\) is a subgroupoid of \(G\), then \(\A|_K\) is again a Fell bundle over \(K\). The space of compactly supported continuous sections of the restricted bundle \(\A|_K \to K\) is denoted by \(\Cc(K;\A|_{K})\).
\subsection{Representations of Fell bundles}
We now recall the necessary background of representations of Fell bundles and their integrated representation  from~\cite{Muhly-Williams2008Equivalence-and-disinte-thm-Fell-bundle} which will require to prove our main result in Section~\ref{sec-main-ground-st}.  

Let \(p\colon \A\to G\) be a Fell bundle over a locally compact Hausdorff groupoid \(G\) and \(\Hilm \to \base\) a Borel Hilbert bundle over the unit space \(\base\). Consider the groupoid 
\[
\textup{End}(\Hilm) \defeq \{(x,T,y): x,y\in \base \textup{ and } T\in \Bound(\Hilm_y,\Hilm_x)\}.
\] 
\begin{definition}[{\cite[Defintion 4.5]{Muhly-Williams2008Equivalence-and-disinte-thm-Fell-bundle}}]
	A map \(\hat{\pi}\colon \A \to \textup{End}(\Hilm)\) is called \(^*\)\nb-functor if for \(a\in \A\),
\(\hat{\pi}(a) = (r(p(a)), \pi(a), s(p(a)))\) where 
\(\pi(a) \colon \Hilm_{s(p(a))} \to \Hilm_{r(p(a))}\) is a bounded linear operator such that
\begin{enumerate}
	\item \(\pi(\lambda a+b)=\lambda \pi(a)+\pi(b)\) if
	\(a,b \in \A_\gamma\) for \(\gamma \in G\);
	\item \(\pi(ab)=\pi(a)\pi(b)\) if \((a,b)\in \A^{(2)}\);
	\item \(\pi(a^*)=\pi(a)^*\) for \(a\in \A\).
\end{enumerate}
\end{definition}
\noindent A \emph{strict representation} of the Fell bundle \(\A\) is \((\mu,\Hilm, \hat{\pi})\) consisting of a quasi-invariant
probability measure \(\mu\) on \(G^{(0)}\), a Borel Hilbert bundle
\(\Hilm\to \base\) and a \(^*\)-functor \(\hat{\pi}\).
Let
\(L^2(\Hilm,\mu)\) be the direct integral
\(\int_{\base}^{\oplus} \Hilm_x\,\dd\mu(x)\) of square-integrable
section of the bundle~\(\Hilm\).  Let \(\Delta_\mu\) be the modular function
associated with the quasi-invariant measure~\(\mu\). This strict
representation integrates to an \(I\)-norm bounded representation
\(L\) of \(\Contc(G;\A)\) on \(L^2(\Hilm,\mu)\) (see~\cite[Proposition~4.10]{Muhly-Williams2008Equivalence-and-disinte-thm-Fell-bundle}) given by
\begin{equation}\label{equ-int-dis-rep}
\inpro{\eta}{L(f)\zeta} = \int_{G^{(0)}} \biggr(\sum_{\gamma\in G^x}
\inpro{\eta(r(\gamma))}{\pi(f(\gamma))\zeta(s(\gamma))}_{\Hilm_{r(\gamma)}}
\Delta_\mu(\gamma)^{-\frac{1}{2}} \biggr) \mathrm{d} \mu(x)
\end{equation}
for \(f\in \Cc(G;\A)\) and \(\zeta,\eta \in L^2(\Hilm,\mu)\).  The
disintegration theorem~\cite[Theorem
4.13]{Muhly-Williams2008Equivalence-and-disinte-thm-Fell-bundle} established a strong converse of this
integration
process~\cite[Proposition~4.10]{Muhly-Williams2008Equivalence-and-disinte-thm-Fell-bundle}. Using vector-valued integral, the
 Equation~\eqref{equ-int-dis-rep} can be written as
\begin{equation}\label{equ-vector-valued-int-dis-rep}
L(f)\zeta(x) =  \sum_{\gamma\in G^x}
\pi(f(\gamma))\zeta(s(\gamma)) 
~\Delta_\mu(\gamma)^{-\frac{1}{2}}\,
\end{equation}
for \(x\in\base\), \(\zeta \in L^2(\Hilm, \mu)\) and \(f\in \Cc(G;\A)\).

\subsection{KMS states and ground states}\label{subsec-KMS-ground-st}
We refer the reader to~\cite{Bratteli-Robinson1981Oper-alg-Quan-sta-mech-part-2} for the standard background on KMS states and ground states. Let \(A\) be a \(\Cst\)\nb-algebra with a one-parameter group of automorphisms \(\sigma = (\sigma_t)_{t\in \R}\). In other words \((A, \sigma)\) is a real \(\Cst\)-dynamical system.  An element
\(a\in A\) is called \emph{analytic} if the map \(t\mapsto \sigma_t(a)\) from \(\R\to A\) 
is the restriction of an analytic function \(\mathbb{C}\to
A\). The collection of all analytic elements of \(A\) forms a norm-dense \(^*\)-subalgebra of \(A\). Let \(\beta \in \R\). A state \(\varphi\) on \(A\) is called a \(\KMS\) state if 
\[
 \varphi(ab) = \varphi(b\sigma_{i\beta}(a))
\] 
for all analytic elements \(a,b\in A\).

Let \(p\colon \A \to G\) be a Fell bundle over an \etale\ groupoid \(G\).
Let \(c\colon G\to \R\) be a 1-cocycle, i.e., a continuous
groupoid homomorphism. Such a cocycle induces a real dynamics
\(\sigma^c \colon \R \to \textup{Aut}(\Cst(G;\A))\) defined by
\begin{equation}\label{equ:real-dyna}
	\sigma^c_t(f)(\gamma)= \mathrm{e}^{itc(\gamma)}f(\gamma)
\end{equation}
for \(t\in \R, \gamma \in G\) and
\(f \in \Cc(G;\A)
\).

The characterization of KMS\(_\beta\) states on the groupoid \(\Cst\)\nb-algebra \(\Cst(G)\) in terms of quasi-invariant measures and a field of states ware obtained by Renault~\cite[Proposition II.5.4]{Renault1980A-gpd-appr-to-cst-alg} and subsequently by Neshveyev~\cite[Theorem 1.3]{Neshveyev2013KMS-states}. This result was generalized to \emph{singly generated} Fell bundles over \'etale groupoids by Afsar--Sims~\cite[Theorem 3.4]{Afsar-Sims2021KMS-state-on-Cst-alg-Fell-bundle-over-gpd}.
For general saturated Fell bundles, the result is as follows.

\begin{theorem}[{\cite[Theorem 5.20]{Holkar-Hossain-2024-KMS-states}}]
	\label{thm-KMS-state}
	Let \(p\colon \A \to G\) be a Fell bundle over a 
	locally compact Hausdorff second countable {\'e}tale groupoid
	\(G\). Let \(\beta\in \R\). Then, there is a bijetive
	correspondence between the \KMS\ states on \((\Cst(G;\A), \sigma^c)\)
	and pairs \((\mu, \{\varphi_x\}_{x})\) consisting of a probability
	measure \(\mu\) on \(G^{(0)}\) and a \(\mu\)\nb-measurable
	field of states \(\{\varphi_x\}_{x}\) on
	\(\Cst(G^x_x;\A|_{G^x_x})\) with \(\A_x\) contained in the
	centraliser of~\(\varphi_x\) such that the following holds:
	\begin{enumerate}
		\item \(\mu\) is quasi-invariant probability
		measure with Radon--Nikodym derivative~\(\mathrm{e}^{-\beta c}\).
		\item  For \(\mu\)-a.e. \(x\in \base\) and
		\(\gamma \in G^x_x\), \(\eta \in G_x\), \(a\in \A_\gamma\)
		and \(\xi \in \A_\eta\) the following equality holds:
		\begin{equation*}
			\varphi_{s(\eta)} \big(a \xi^* \xi \cdot
			\delta_{\gamma}\big) = \varphi_{r(\eta)}\big(\xi   a \xi^* \cdot
			\delta_{\eta \gamma \eta^{-1}}\big).
		\end{equation*}
	\end{enumerate}
	The state corresponding to the pair \((\mu, \{\varphi_x\}_{x})\) is given by 
	\[
	 \varphi(f) =\int_{G^{(0)}} \sum_{\gamma \in G^x_x} \varphi_x(f(\gamma)\delta_{\gamma}) \dd\mu(x)
	\]
	for \(f\in \Cc(G;\A)\).
\end{theorem}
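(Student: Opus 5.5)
The plan is to follow Neshveyev's strategy for \(\Cst(G)\)~\cite{Neshveyev2013KMS-states}, replacing scalar functions by sections of \(\A\) and using the \(\Cont_0(\base)\)\nb-module structure of \(\Cst(G;\A)\) to localise. Throughout I use that \(\Cc(G;\A)\) is spanned by sections supported in open bisections, and that such sections are analytic for \(\sigma^c\). For a section \(f\) supported in a bisection \(U\) one has \(\sigma^c_z(f)=\mathrm{e}^{izc}f\) pointwise.

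\emph{Step 1: from a \KMS\ state to the measure.} Let \(\varphi\) be a \KMS\ state. Multiplying sections by functions \(h\in\Cc(\base)\) acts on \(\Cst(G;\A)\) through multipliers, and \(h\mapsto\varphi(h\cdot u)\), for \(u\) an approximate unit of \(\Cont_0(\base;\A|_{\base})\), is a positive functional of norm at most one. I will show it has norm exactly one, so the Riesz representation theorem gives a probability measure \(\mu\) on \(\base\). For quasi-invariance, take \(\xi\in\Cc(U;\A)\) with \(U\) a bisection and apply the \KMS\ condition to \(a=\xi\), \(b=\xi^*\):
\[
\varphi(\xi*\xi^*)=\varphi\big(\xi^**\mathrm{e}^{-\beta c}\xi\big).
\]
Here \(\xi*\xi^*\) lives over \(r(U)\) and \(\xi^**\xi\) lives over \(s(U)\). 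Letting \(\xi\) run through sections of the form \(\sqrt{h}\cdot\xi_0\), with \(\xi_0\) built from approximate units of the imprimitivity bimodules \(\A_\gamma\) (saturation is used here), should give
\[
\mu(h\circ r|_U)=\int_{s(U)} h(r(U^{-1}x))\,\mathrm{e}^{-\beta c(\cdot)}\,\dd\mu.
\]
This is exactly condition~(1), namely that \(\mu\) is quasi-invariant with Radon--Nikodym derivative \(\mathrm{e}^{-\beta c}\).

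\emph{Step 2: support on the isotropy.} Let \(f\) be supported in a bisection \(U\) that is disjoint from \(\mathrm{Iso}(G)\). I shrink \(U\) so that \(r(U)\cap s(U)=\emptyset\), and write \(f=h*f\) with \(h\in\Cc(r(U))\). The function \(h\) is \(\sigma^c\)\nb-fixed, so the \KMS\ condition gives
\[
\varphi(h*f)=\varphi(f*h)=0.
\]
Hence \(\varphi\) only sees \(f|_{\mathrm{Iso}(G)}\). Next, \(f\mapsto\varphi(h\cdot f)\), for \(f\) supported near the isotropy bundle, is a family of functionals dominated by \(\mu\). I disintegrate this family over \(\mu\), fibrewise via Radon--Nikodym and using second countability to choose a countable dense set of sections. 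This produces, for \(\mu\)\nb-a.e.\ \(x\), a positive functional \(\varphi_x\) on \(\Cc(G^x_x;\A|_{G^x_x})\) satisfying the integral formula in the statement.

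\emph{Step 3: positivity and the conditions on \(\varphi_x\).} To see that \(\varphi_x\) extends to a state of \(\Cst(G^x_x;\A|_{G^x_x})\), I compare with positivity of \(\varphi\) on elements \(g^**g\) with \(g\) localised near \(x\). Alternatively, I use the Muhly--Williams disintegration~\cite[Theorem 4.13]{Muhly-Williams2008Equivalence-and-disinte-thm-Fell-bundle} of the GNS representation to write \(\varphi_x\) as a vector functional, up to a normalisation fixed by \(\mu\). Condition~(2), and the requirement that \(\A_x\) lies in the centraliser of \(\varphi_x\), come from applying the \KMS\ condition to \(a=\xi\in\Cc(V;\A)\) and \(b=g\). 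Here \(V\) is a bisection through \(\eta\), and \(g\) is supported near \(\gamma\) in the isotropy. The factor \(\mathrm{e}^{-\beta c(\eta)}\) produced by \(\sigma^c_{i\beta}\) is cancelled exactly by the Radon--Nikodym derivative from Step 1, and \(c\) vanishes on \(\gamma\) and on its conjugate, so the condition appears without an exponential factor.

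\emph{Step 4: converse and bijectivity.} Given a pair \((\mu,\{\varphi_x\})\), I define \(\varphi\) by the stated formula. Positivity is the main obstacle. My first attempt is to realise \(\varphi\) as a vector state of an induced representation. Each \(\varphi_x\) gives a GNS representation of \(\Cst(G^x_x;\A|_{G^x_x})\), and I induce it to \(\Cst(G;\A)\) using the equivalence between \(\A|_{G_x}\) and the isotropy bundle. Integrating over \(\mu\) should then exhibit \(\varphi(g^**g)\) as \(\int\norm{\cdot}^2\,\dd\mu\geq0\). Condition~(2) is exactly what makes this computation consistent: it lets one move \(\xi^*\xi\) across an orbit, producing the factor \(\mathrm{e}^{-\beta c}\). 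The \KMS\ condition then needs checking only on bisection-supported sections, where it reduces to the computation of Step 3 read backwards. Injectivity of the correspondence holds because \(\varphi\) determines \(\mu\) (Step 1) and determines the \(\varphi_x\) almost everywhere (Step 2).
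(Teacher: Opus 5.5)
There is a genuine gap in Step~1, and it cannot be closed. The statement as quoted here is false for general saturated Fell bundles. (The paper gives no proof; it only cites \cite{Holkar-Hossain-2024-KMS-states}.)

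\textbf{Where Step~1 fails.} Write the restriction of \(\varphi\) to \(\Contz(\base;\A|_{\base})\) as \(\int\psi_x\,\dd\mu(x)\), with \(\psi_x\) states on \(\A_x\). For \(\xi\in\Cc(U;\A)\) and \(U\) a bisection, the \KMS\ condition with \(a=\xi\), \(b=\xi^*\) gives only
\[
\int_{r(U)}\psi_{r(\gamma)}\bigl(\xi(\gamma)\xi(\gamma)^*\bigr)\,\dd\mu
=\int_{s(U)}\e^{-\beta c(\gamma)}\,\psi_{s(\gamma)}\bigl(\xi(\gamma)^*\xi(\gamma)\bigr)\,\dd\mu .
\]
Here \(\gamma\) is the element of \(U\) lying over the integration variable. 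To get a statement about \(\mu\) alone, you need elements of \(\A_\gamma\) whose left and right inner products approximate the units of \(\A_{r(\gamma)}\) and \(\A_{s(\gamma)}\) at the same time. Saturation only gives fullness on both sides, which is not enough. In \(\C^n\), viewed as an \(\Mat_n(\C)\)--\(\C\) imprimitivity bimodule, \(\mathrm{Tr}\bigl(\sum_k\xi_k\xi_k^*\bigr)=\sum_k\xi_k^*\xi_k\) for every finite family. So the two sums can never both be close to \(1\) when \(n\geq 2\).

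\textbf{First counterexample.} Let \(G\) be the pair groupoid on \(\{1,2\}\), with \(\A_{(1,1)}=\Mat_n(\C)\), \(\A_{(2,2)}=\C\), \(\A_{(1,2)}=\C^n\) (columns), \(\A_{(2,1)}\) the rows, and matrix operations.
\begin{enumerate}
\item This bundle is saturated and \(\Cst(G;\A)\cong\Mat_{n+1}(\C)\).
\item Take \(c=0\). The \KMS\ states are then the tracial states, so there is exactly one, \(\mathrm{Tr}/(n+1)\).
\item In the form of the statement it has \(\varphi_1=\mathrm{Tr}/n\), \(\varphi_2=\mathrm{id}\), \(\mu(\{1\})=n/(n+1)\) and \(\mu(\{2\})=1/(n+1)\).
\item Its Radon--Nikodym derivative at \((1,2)\) is therefore \(n\neq 1=\e^{-\beta c}\).
\item Condition~(2) also fails. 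With \(\eta=(1,2)\), \(\gamma=2\), \(a=1\) and \(\xi\in\C^n\) it reads \(\norm{\xi}^2=\varphi_1(\xi\xi^*)\). This forces \(\varphi_1=\mathrm{Tr}\), which is not a state for \(n\geq2\).
\end{enumerate}
So no pair is admissible, yet a \KMS\ state exists.

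\textbf{Second counterexample.} View \(\mathcal{O}_n\) as the Fell bundle over \(\Z\) of its gauge spectral subspaces. It is saturated, since \(\sum_iS_iS_i^*=1=S_1^*S_1\). Take \(c(k)=k\). The unit space is a point, so the Radon--Nikodym derivative is identically \(1\), and (1) forces \(\beta=0\). But \(\mathcal{O}_n\) has no traces and does have a \KMS[\log n] state.

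\textbf{What your argument actually gives.} Step~1 gives only that \(\mu\) is quasi-invariant; fullness of the \(\A_\gamma\) yields mutual absolute continuity. Your Step~3 computation, done with \(\dd\nu=\Delta_\mu\,\dd\nu^{-1}\) on bisections, gives
\[
\varphi_{s(\eta)}\bigl(a\xi^*\xi\cdot\delta_\gamma\bigr)=\Delta_\mu(\eta)\,\e^{\beta c(\eta)}\,\varphi_{r(\eta)}\bigl(\xi a\xi^*\cdot\delta_{\eta\gamma\eta^{-1}}\bigr).
\]
So the ``exact cancellation'' you invoke holds only when \(\Delta_\mu=\e^{-\beta c}\). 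That is the case for Fell line bundles (twisted groupoid \(\Cst\)-algebras). It also holds whenever each \(\A_\eta\) contains families whose left and right inner products approximate both units simultaneously. If you replace (1)--(2) by quasi-invariance of \(\mu\) plus this relation, the rest of your outline can be carried out along the lines you describe.

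\textbf{A correction to Step~4.} Positivity needs neither condition. For each \(x\), the functional \(f\mapsto\varphi_x(f|_{G^x_x})\) is the vector state of the representation induced through the correspondence of Section~\ref{sec-induc-state}, run with \(Z=\{x\}\); that construction only needs \(Z\) closed. Integrating against any probability measure therefore gives a state. The two conditions are needed only to verify the \KMS\ identity.
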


\noindent 
We wish to obtain a characterization analogous to Theorem~\ref{thm-KMS-state} for \(\sigma^c\)-ground states on the Fell bundle \(\Cst\)\nb-algebra \(\Cst(G;\A)\). 

A state \(\varphi\) on \((A, \sigma)\) is called a ground state (or \(\sigma\)-ground state) if for every \(a\in A\) and every analytic element \(b\) the function 
\[
 z\mapsto \varphi(a\sigma_z(b))
\]
 is bounded on \(\{z\in \C : \textup{Im}(z)\geq 0\}\) (see~\cite[Section 5.3]{Bratteli-Robinson1981Oper-alg-Quan-sta-mech-part-2}). Such a state is always \(\sigma\)\nb-invariant. A state on \(A\) is called a KMS\(_\infty\) state if it is weak\(^*\)-limit of a net of KMS\(_{\beta_i}\) states \((\varphi_i)_i\) such that \(\beta_i\to \infty\). Any KMS\(_\infty\) state is a ground state (see~\cite[Proposition 5.3.23]{Bratteli-Robinson1981Oper-alg-Quan-sta-mech-part-2}). However, the converse does not holds in general. (See~\cite[Corollary 1.8]{Laca-Larsen-Neshveyev-Ground-states}, \cite{Laca-Raeburn-Phase-transation-of-Top-alg-affi-sem-gp-n} and Remark~\ref{rmk-Infinity}).

Let \(c\) be a real-valued \(1\)-cocycle of \(G\). Recall the boundary set \(Z\) from~\cite{Renault1980A-gpd-appr-to-cst-alg},~\cite{Laca-Larsen-Neshveyev-Ground-states} of the cocycle \(c\) 
\begin{equation}\label{eq-boundary-set}
Z = \{x\in \base: c\leq 0 \textup{ on } G^x\} = \{x\in \base: c\geq 0 \textup{ on } G_x\}.
\end{equation}
Renault~\cite{Renault1980A-gpd-appr-to-cst-alg} denote this boundary set by `\(\textup{Min}(c)\)', however we follow the notation of~\cite{Laca-Larsen-Neshveyev-Ground-states} and denote the boundary set by \(Z\). The boundary set may be empty, but when it is nonempty, it plays a crucial role
to characterize the \(\sigma^c\)-ground states for the Fell bundle algebra \(\Cst(G;\A)\). Consider the subgroupoid \(G(Z)\defeq  r^{-1}(Z)\cap s^{-1}(Z) \) of \(G\). Lemma~1.3 of~\cite{Laca-Larsen-Neshveyev-Ground-states} says that if the kernel groupoid \(c^{-1}(0)\) of the cocycle is \'etale, then \(G(Z)\) is also \'etale. We denote \(s^{-1}(Z)\) by the notation \(G_Z\).

\noindent The following result is standard, but we include a proof for completeness.
\begin{lemma}\label{lem-closed-of-boundary}
	Let \(G\) be a locally compact Hausdorff \'etale groupoid and \(c\colon G \to \R\) a \(1\)\nb-cocycle. Then the boundary set 
	\[
	Z =\{x\in \base: c(\gamma) \leq 0 \textup{ for all } \gamma \in G^x\}
	\]
	is closed and consequently, \(G(Z)\) is a closed subgroupoid of \(G\).
\end{lemma}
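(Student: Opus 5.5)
To show that \(Z\) is closed, I will prove that its complement \(\base\setminus Z\) is open, using that \(G\) is \'etale and \(c\) is continuous. Fix \(x\in\base\setminus Z\). By definition there is some \(\gamma\in G^x\) with \(c(\gamma)>0\). The set \(\{\eta\in G: c(\eta)>0\}\) is open by continuity of \(c\). Because \(G\) is \'etale, I can choose an open bisection \(U\) with \(\gamma\in U\subseteq\{\eta : c(\eta)>0\}\). Then \(r(U)\) is an open subset of \(\base\) containing \(x=r(\gamma)\). For each \(y\in r(U)\) there is a unique \(\eta\in U\) with \(r(\eta)=y\). This \(\eta\) lies in \(G^y\) and satisfies \(c(\eta)>0\), so \(y\notin Z\). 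Hence \(r(U)\subseteq\base\setminus Z\), and the complement is open. Note that this step only needs \(r\) to be an open map, which holds for \'etale groupoids.

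For the second claim, note that \(\base\) is closed in \(G\), since \(G\) is Hausdorff, and \(Z\) is closed in \(\base\). So \(Z\) is closed in \(G\) as well. By continuity of \(r\) and \(s\), the sets \(r^{-1}(Z)\) and \(s^{-1}(Z)\) are closed in \(G\). Therefore \(G(Z)=r^{-1}(Z)\cap s^{-1}(Z)\) is closed.

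It remains to check that \(G(Z)\) is a subgroupoid. If \(\gamma,\eta\in G(Z)\) are composable, then \(r(\gamma\eta)=r(\gamma)\in Z\) and \(s(\gamma\eta)=s(\eta)\in Z\). For inverses, \(r(\gamma^{-1})=s(\gamma)\) and \(s(\gamma^{-1})=r(\gamma)\), so \(G(Z)\) is closed under inversion. The units of \(Z\) also lie in \(G(Z)\).

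There is no real obstacle here. The only point needing care is producing an open neighbourhood of \(x\) in \(\base\) from the open neighbourhood of \(\gamma\), and the openness of \(r\) handles this.
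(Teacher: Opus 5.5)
Your proof is correct and is essentially the paper's argument. The paper takes a net \(x_i\to x\) in \(Z\), lifts it through an open bisection around \(\gamma\in G^x\), and uses continuity of \(c\) to get \(c(\gamma)\le 0\); you run the same bisection-plus-continuity idea contrapositively, showing that \(\base\setminus Z = r(\{\eta\in G : c(\eta)>0\})\) is the image of an open set under the open map \(r\), and your treatment of \(G(Z)\) as a closed subgroupoid matches (and slightly expands on) the paper's.
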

\begin{proof}
	Let \(x_i\to x\) for \(x_i\in Z\). Since \(r\colon G\to \base\) is a local homeomorphism, for \(\gamma\in G^x\) we choose an open bisection \(U\) containing \(\gamma\) such that \(r|_{U} \colon U\to r(U)\) is a homeomorphism. Since \(x\in r(U)\) and \(x_i\to x\),  we have \(x_i\in r(U)\) for sufficiently large \(i\). Therefore, \(\gamma_i\defeq (r|_U)^{-1}(x_i) \to \gamma\). Since \(x_i\in Z\), \(c(\gamma_i)\leq 0\). Thus, \(c(\gamma) \leq 0\) and hence \(x\in Z\). Therefore, \(Z\) is a closed subset of \(\base\). 
	
Since \(Z\) is closed, both \(r^{-1}(Z)\) and \(s^{-1}(Z)\) are closed in \(G\). Consequently,
\(
G(Z)=r^{-1}(Z)\cap s^{-1}(Z)\)
is a closed subgroupoid of \(G\).
\end{proof}
\noindent We end this section with the following statement of an approximation identity of Hilbert module.
\begin{lemma}[{\cite[Page 5]{Lance1995Hilbert-modules}}]\label{lem-app-id-hilbert-mod}
Let \(X\) be a right Hilbert \(A\)\nb-module and \((u_\lambda)_{\lambda}\) an approximate identity of \(A\). Then \(\norm{x u_\lambda -x} \to 0\) as \(\lambda \to \infty\).
\end{lemma}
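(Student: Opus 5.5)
This is the standard fact about right Hilbert modules; the plan is to reduce the convergence of \(x u_\lambda\) to \(x\) to a statement inside the \(\Cst\)\nb-algebra \(A\) by computing the norm squared through the inner product. Throughout, \((u_\lambda)_\lambda\) is taken to be an approximate identity of positive contractions, i.e.\ \(0\leq u_\lambda\) and \(\norm{u_\lambda}\leq 1\), which is the usual convention and which we may always arrange in a \(\Cst\)\nb-algebra.

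\emph{Expanding the norm.} Using \(\norm{y}^2=\norm{\inpro{y}{y}_A}\), the \(A\)\nb-linearity of the inner product in the second variable, and \(u_\lambda^*=u_\lambda\), set \(a\defeq \inpro{x}{x}_A\in A\). Then
\[
\inpro{xu_\lambda - x}{xu_\lambda - x}_A = u_\lambda a u_\lambda - u_\lambda a - a u_\lambda + a .
\]

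\emph{Estimating in \(A\).} Since \(u_\lambda\) is an approximate identity for \(A\), we have \(a u_\lambda\to a\) and \(u_\lambda a\to a\) in norm. For the quadratic term, write
\[
u_\lambda a u_\lambda - a = u_\lambda(a u_\lambda - a) + (u_\lambda a - a),
\]
and use \(\norm{u_\lambda}\leq 1\) to bound its norm by \(\norm{au_\lambda-a}+\norm{u_\lambda a-a}\to 0\). Hence the whole expression above tends to \(a-a-a+a=0\), so \(\norm{xu_\lambda-x}^2\to 0\) and therefore \(\norm{xu_\lambda-x}\to 0\).

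There is no real obstacle here. The only points needing care are checking the adjoint and linearity conventions for the right module, namely \(\inpro{xb}{y}_A=b^*\inpro{x}{y}_A\) and \(\inpro{x}{yb}_A=\inpro{x}{y}_A b\), and the uniform bound \(\norm{u_\lambda}\leq1\), which is what controls the product term.
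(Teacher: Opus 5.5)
Your argument is correct and is essentially the standard proof from Lance that the paper cites without reproducing. That proof likewise expands \(\inpro{xu_\lambda - x}{xu_\lambda - x}_A = u_\lambda a u_\lambda - u_\lambda a - a u_\lambda + a\) and uses that \((u_\lambda)\) is an approximate identity of positive contractions to conclude that this element tends to \(0\) in \(A\).
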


\section{Induction of states}\label{sec-induc-state}
 
 In this section, we develop an induction procedure for states from the \(\Cst\)\nb-algebra of Fell bundle over the boundary groupoid \(G(Z)\) to the \(\Cst\)\nb-algebra of the given Fell bundle \(p\colon \A\to G\). The construction is based on the groupoid equivalence between the boundary groupoid \(G(Z)\) and the imprimitivity groupoid \(H^G\). This induction will be useful in the proof of our main result in Section~\ref{sec-main-ground-st}.

Let \(p\colon \A \to G\) be a Fell bundle over a locally compact Hausdorff \'etale groupoid~\(G\). Let \(Z\) be the boundary set defined in Equation~\eqref{eq-boundary-set}. Then the reduction \(G(Z) = r^{-1}(Z)\cap s^{-1}(Z)\) is a closed subgroupoid of \(G\). Throughout this section, we denote \(G(Z)\) by \(H\) and assume that \(H\) is \'etale. Let \(p\colon \A|_{H} \to H\) be the restricted Fell bundle to the subgroupoid \(H\). We first briefly recall the \emph{imprimitivity groupoid} \(H^G\) from~\cite[Page 47]{Williams2019A-toolkit-for-gpd-alg}. Consider the free and proper \(H\)-space 
\[
 G_Z\times_{s, Z,s}G_Z =\{(\gamma, \eta) \in G_Z\times G_Z : s(\gamma) =s(\eta)\}.
\]
The orbit space \(H^G\defeq (G_Z\times_{s, Z,s}G_Z)/H\) is a locally compact Hausdorff groupoid and the groupoid structure is as follows: we denote the orbit of \((\gamma, \eta)\) by \([\gamma, \eta]\). The range and source maps \(r,s \colon  H^G \rightrightarrows G_Z/H\) are given by \(r([\gamma, \eta]) = \gamma H\) and  \(s([\gamma, \eta]) = \eta H\). We say \([\gamma, \eta]\) and \([\omega, \zeta]\) are composable if \(\eta H = \omega H\) and the operations are given by 
\[
 [\gamma, \eta] [\omega, \zeta] = [\gamma, \zeta] \quad \textup{and} \quad [\gamma, \eta]^{-1} = [\eta, \gamma].
\]
Then, \(G_Z\) is a \((H^G, H)\)-groupoid equivalence.

Let \(p\colon \A\to G\) be a Fell bundle. Define a continuous groupoid homomorphism \(\rho \colon H^G \to G\) by \(\rho([\gamma, \eta])  =\gamma\eta^{-1}\). The \emph{pull back} bundle 
\[
 \rho^*\A \defeq \{(a,[\gamma, \eta]) : [\gamma, \eta] \in H^G , a\in \A \textup{ and } p(a)= \rho([\gamma, \eta]) \}
\]
is a Fell bundle over \( H^G\) with the bundle map \(\rho^*p(a,[\gamma, \eta]) = [\gamma, \eta]\). 
The restricted bundle \(p\colon \A|_{G_Z} \to G_Z\) implements an equivalence between the Fell bundles \(\rho^*p\colon \rho^*\A \to H^G\) and \(p\colon \A|_{G(Z)} \to G(Z)\) in the sense of Muhly--Williams~\cite[Definition 6.1]{Muhly-Williams2008Equivalence-and-disinte-thm-Fell-bundle}. The left action of \(\rho^*\A\) on \(\A|_{G_Z}\) given by 
\((a,[\gamma, \eta])b = ab \) if \(\eta H = r(p(b))\); the right action of \(\A|_H\) on \(\A|_{G_Z}\) is given by \(b\cdot e =be\). The left inner product \(\Linpro{\cdot}{\cdot} \colon \A|_{G_Z}*_s\A|_{G_Z} \to \rho^*\A\) and the right inner product \(\inpro{\cdot}{\cdot} \colon \A|_{G_Z}*_r\A|_{G_Z} \to \A|_H\) are given by 
\[
 \Linpro{e}{f} \defeq  (ef^*, [p(e), p(f)]) \quad \textup{and} \quad \inpro{e}{f} \defeq e^*f
\]
where \(\A|_{G_Z}*_s\A|_{G_Z} \defeq \{(e,f) \in \A|_{G_Z} \times \A|_{G_Z} : s(p(e)) = s(p(f))\}\) and similarly we can define \(\A|_{G_Z}*_r\A|_{G_Z}\). Then \(\Cc(G_Z;\A|_{G_Z})\) is a pre-imprimitivity bimodule between \(\Cc(H^G;\rho^*\A)\) and \(\Cc(H;\A|_{H})\) with actions and inner products are given by 
\begin{align}
	F*'\phi(\gamma) &= \sum_{\eta \in G_{s(\gamma)}} F([\gamma, \eta]) \phi(\eta)\label{eq-equivalence-1}\\
	\phi*_rg(\gamma) &= \sum_{h\in G^{s(\gamma)}_{Z}} \phi(\gamma h) g(h^{-1})  \label{eq-equivalence-2}\\
	\inpro{\phi}{\psi}(h) &= \sum_{\gamma \in G_{r(h)}} \phi(\gamma)^*\psi(\gamma h) \label{eq-equivalence-3}\\
	\Linpro{\phi}{\psi}([\gamma, \eta]) &= \sum_{h\in G^{s(\gamma)}_Z} \phi(\gamma h) \psi(\eta h)^* \label{eq-equivalence-4}
\end{align}
where \(F\in \Cc(H^G;\sigma^*\A)\), \(\phi, \psi \in \Cc(G_Z;\A|_{G_Z})\) and \(g\in \Cc(H;\A|_{H})\). The completion \(X\) of \(\Cc(G_Z;\A|_{G_Z})\) is a \(\Cst(H^G;\sigma^*\A)\)-\(\Cst(H;\A|_{H})\)-imprimitivity bimodule (see~\cite[Section 2.1]{Ionescu-Williams-Irre-Ind-rep-Fell-bund} for details).

Our first goal is to define a left action of \(\Cc(G;\A)\) on the right Hilbert \(\Cst(G(Z);\A|_{G(Z)})\)-module and complete it to a \(\Cst\)-correspondence from \(\Cst(G;\A) \to \Cst(G(Z);\A|_{G(Z)})\). To achieve this we define the left action by
\begin{equation}\label{eq-left-action}
	f*_l\phi(\gamma) = \sum_{\eta\in G^{r(\gamma)}} f(\eta)\phi(\eta^{-1}\gamma)
\end{equation} 
for \(f\in \Cc(G;\A)\) and \(\phi\in \Cc(G_Z;\A|_{G_Z})\).
\begin{lemma}\label{left-aaction-adjointable}
The left action \(*_l\) given by Equation~\eqref{eq-left-action} satisfies
\[
 \inpro{f*_l\zeta}{\eta} =\inpro{\zeta}{f^**_l\eta}
\] 
for \(f\in \Cc(G;\A)\) and \(\zeta, \eta\in \Cc(G_Z;\A|_{G_Z})\).
\end{lemma}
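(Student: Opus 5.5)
Both sides of the identity are elements of \(\Cc(H;\A|_{H})\). The plan is therefore to evaluate them pointwise at an arbitrary \(h\in H\), expand using Equations~\eqref{eq-equivalence-3} and~\eqref{eq-left-action}, and match the two resulting finite sums by a change of variables in \(G\). We assume the natural reading of~\eqref{eq-equivalence-3}: the sum runs over \(\gamma\in G_Z\) with \(r(\gamma)\) fixed, i.e.\ \(\gamma\in G_Z\) with \(s(\gamma)=r(h)\), so that \(\gamma h\) is defined. All sums are finite because the sections are compactly supported and \(G\) is \'etale, so rearrangements are harmless.

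First step: compute the left side. For \(h\in H\),
\[
\inpro{f*_l\zeta}{\eta}(h)=\sum_{\gamma}\big(f*_l\zeta(\gamma)\big)^*\eta(\gamma h)=\sum_{\gamma}\sum_{\alpha\in G^{r(\gamma)}}\zeta(\alpha^{-1}\gamma)^*f(\alpha)^*\eta(\gamma h).
\]
Here we use that the involution on \(\A\) is antimultiplicative, \((ab)^*=b^*a^*\), which follows from the Fell bundle axioms. Substituting \(\kappa=\alpha^{-1}\gamma\), the pair \((\gamma,\alpha)\) is replaced by \((\kappa,\alpha)\) with \(\kappa\in G_Z\), \(s(\kappa)=r(h)\) and \(\alpha\in G^{r(\kappa)}\) arbitrary. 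This map is a bijection, with inverse \(\gamma=\alpha\kappa\). The left side thus becomes
\[
\sum_{\kappa}\zeta(\kappa)^*\sum_{\alpha\in G^{r(\kappa)}}f(\alpha)^*\eta(\alpha\kappa h).
\]

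Second step: compute the right side.
\[
\inpro{\zeta}{f^**_l\eta}(h)=\sum_{\kappa}\zeta(\kappa)^*\sum_{\beta\in G^{r(\kappa h)}}f^*(\beta)\,\eta(\beta^{-1}\kappa h)=\sum_{\kappa}\zeta(\kappa)^*\sum_{\beta\in G^{r(\kappa)}}f(\beta^{-1})^*\eta(\beta^{-1}\kappa h).
\]
Reindexing by \(\alpha=\beta^{-1}\), which runs over \(G_{r(\kappa)}\), gives the inner sum \(\sum_{\alpha\in G_{r(\kappa)}}f(\alpha)^*\eta(\alpha\kappa h)\).

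Reconciling the index sets is where the bookkeeping has to be done carefully, and it is the one genuine check in the proof. In the first computation \(\gamma=\alpha\kappa\) must be composable, which forces \(s(\alpha)=r(\kappa)\), i.e.\ \(\alpha\in G_{r(\kappa)}\) rather than \(G^{r(\kappa)}\). So the correct substitution in the first step is: \(\alpha\) ranges over \(G^{r(\gamma)}\), and then \(\kappa=\alpha^{-1}\gamma\) satisfies \(r(\kappa)=s(\alpha)\), equivalently \(\alpha\in G_{r(\kappa)}\) with \(r(\alpha)=r(\gamma)\). With this correction the two expressions coincide term by term.

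It also has to be confirmed that the substituted variables stay in the right domains. The element \(\kappa\) lies in \(G_Z\) because \(s(\kappa)=s(\gamma)\in Z\), and \(\alpha\kappa h\in G_Z\) because its source is \(s(h)\in Z\). Hence both reindexings are bijections between the same finite index sets, and the identity holds at every \(h\in H\), which proves the lemma.
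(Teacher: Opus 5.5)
Your proof is correct and follows the paper's own proof. Both evaluate at \(h\in G(Z)\), expand via Equations~\eqref{eq-equivalence-3} and~\eqref{eq-left-action}, substitute \(\kappa=\alpha^{-1}\gamma\) (the paper's \(\gamma\mapsto\tau\gamma\)), and invert the remaining variable to recognise \(f^*\). One write-up fix: after the substitution the inner index set is \(G_{r(\kappa)}\), not \(G^{r(\kappa)}\), as you note yourself later, so state the first-step display with \(G_{r(\kappa)}\) from the outset instead of correcting it afterwards.
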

\begin{proof}
For \(h\in G(Z)\), we have
\begin{align*}
\inpro{f*_l\zeta}{\eta} (h) &= \sum_{\gamma \in G_{r(h)}} ((f*_l\zeta)(\gamma))^*\eta(\gamma h) = \sum_{\gamma \in G_{r(h)}} \sum_{\tau\in G^{r(\gamma)}} \bigl( f(\tau)\zeta(\tau^{-1}\gamma)\bigr)^*\eta(\gamma h)\\
&=\sum_{\gamma \in G_{r(h)}} \sum_{\tau\in G^{r(\gamma)}} \zeta(\tau^{-1}\gamma)^*f(\tau)^*\eta(\gamma h).
\end{align*}
Using the change of variable \(\gamma \mapsto \tau\gamma\), the last term becomes
\[
 \sum_{\gamma \in G_{r(h)}} \sum_{\tau\in G_{r(\gamma)}} \zeta(\gamma)^*f(\tau)^*\eta(\tau\gamma h).
\]
Since \(\tau \mapsto \tau^{-1}\) is a  bijection from \(G_{r(\gamma)} \to G^{r(\gamma)}\) using the change of variable \(\tau \mapsto \tau^{-1}\) in the last term, we obtain
\begin{multline*}
\sum_{\gamma \in G_{r(h)}} \sum_{\tau\in G^{r(\gamma)}} \zeta(\gamma)^*f(\tau^{-1})^*\eta(\tau^{-1}\gamma h) = \sum_{\gamma \in G_{r(h)}} \sum_{\tau\in G^{r(\gamma)}} \zeta(\gamma)^*f^*(\tau)\eta(\tau^{-1}\gamma h)\\
=\sum_{\gamma \in G_{r(h)}} \zeta(\gamma)^* f^**_l\eta(\gamma h) = \inpro{\zeta}{f^**_l\eta}.
\end{multline*}
\end{proof}
\begin{lemma}\label{lem-left-act-comp}
The left action of \(\Cc(G;\A)\) on the pre-Hilbert module \(\Cc(G_Z;\A|_{G_Z})\) defined by Equation~\eqref{eq-left-action} is compatable with the module operations of \(X\), i.e., we have 
\begin{align}
	g*_l(\zeta+\eta) &= g*_l\zeta +g*_l\eta \label{eq-induction-1}\\
	(g+k)*_l\zeta &= g*_l\zeta + k*_l\zeta \label{eq-induction-2}\\
	(g*k)*_l\zeta &= g *_l(k*_l\zeta) \label{eq-induction-3}\\
	(g*_l\zeta)*_rf &= g*_l(\zeta*_rf) \label{eq-induction-4}
\end{align}
for \(g,k\in \Cc(G;\A)\), \(\zeta, \eta \in \Cc(G_Z;\A|_{G_Z})\) and \(f\in \Cc(G(Z);\A|_{G(Z)})\).
\end{lemma}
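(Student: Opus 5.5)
The plan is to verify each identity pointwise at an arbitrary \(\gamma\in G_Z\), since all four are equalities of compactly supported sections of \(\A|_{G_Z}\). Each sum is finite because \(G\) is \'etale and the sections have compact support. We also need \(f*_l\phi\) to be a section over \(G_Z\). If \(\gamma\in G_Z\) and \(\eta\in G^{r(\gamma)}\), then \(s(\eta^{-1}\gamma)=s(\gamma)\in Z\), so \(\eta^{-1}\gamma\in G_Z\) and the formula in Equation~\eqref{eq-left-action} makes sense. Continuity and compact support of \(f*_l\phi\) follow by the standard argument for convolution in Fell bundles: use partitions of unity subordinate to bisections, exactly as for the convolution on \(\Cc(G;\A)\). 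I would mention this once rather than redo it.

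Equations~\eqref{eq-induction-1} and~\eqref{eq-induction-2} are immediate. The multiplication \(\A^{(2)}\to\A\) is bilinear on fibres, i.e.\ \(a(b+b')=ab+ab'\) and \((a+a')b=ab+a'b\) whenever the fibres match, and finite sums may be rearranged.

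For Equation~\eqref{eq-induction-3}, expand the left side:
\[
(g*k)*_l\zeta(\gamma)=\sum_{\eta\in G^{r(\gamma)}}\sum_{\alpha\in G^{r(\eta)}} g(\alpha)k(\alpha^{-1}\eta)\zeta(\eta^{-1}\gamma).
\]
Substitute \(\eta=\alpha\beta\) with \(\beta\in G^{s(\alpha)}\), which reindexes the double sum bijectively. Associativity of the Fell bundle multiplication then gives
\[
\sum_{\alpha\in G^{r(\gamma)}} g(\alpha)\sum_{\beta\in G^{s(\alpha)}} k(\beta)\zeta(\beta^{-1}\alpha^{-1}\gamma)=g*_l(k*_l\zeta)(\gamma).
\]
Associativity of the multiplication is part of the Fell bundle structure; it is implicit in Definition~\ref{def:fell-bundle} and is standard in the Muhly--Williams axioms.

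For Equation~\eqref{eq-induction-4}, expand both sides with Equation~\eqref{eq-equivalence-2}. The left side is
\[
\sum_{h}\sum_{\eta\in G^{r(\gamma)}} f(\eta)\zeta(\eta^{-1}\gamma h)g(h^{-1}),
\]
where \(h\) ranges over \(G(Z)^{s(\gamma)}\) and \(r(\gamma h)=r(\gamma)\) has been used. The right side is
\[
\sum_{\eta\in G^{r(\gamma)}} f(\eta)\sum_{h}\zeta(\eta^{-1}\gamma h)g(h^{-1}),
\]
where \(h\) ranges over \(G(Z)^{s(\eta^{-1}\gamma)}=G(Z)^{s(\gamma)}\). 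These agree after interchanging the finite sums and applying associativity in \(\A\). The only real point of care, and the main step I would check explicitly, is that the index sets match. This holds because \(s(\eta^{-1}\gamma)=s(\gamma)\) and \(r(\gamma h)=r(\gamma)\). The reindexing in~\eqref{eq-induction-3} similarly needs the fibre bookkeeping \(p(k(\beta)\zeta(\cdots))=\beta\beta^{-1}\alpha^{-1}\gamma\), to confirm that all products are defined.
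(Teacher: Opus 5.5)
Your proposal is correct and follows the approach the paper intends. The paper calls the first two identities immediate from the definition and omits the last two, referring to the same pointwise reindexing computation in Lemma~3.6 of Holkar--Hossain's KMS paper. That computation is what you carry out, including the two facts it depends on, \(s(\eta^{-1}\gamma)=s(\gamma)\) and \(r(\gamma h)=r(\gamma)\). The only slip is cosmetic: in your check of \eqref{eq-induction-4} you swapped the letters \(f\) and \(g\) relative to the statement, which does not affect the argument.
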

\noindent The proof of Equations~\eqref{eq-induction-1} and~\eqref{eq-induction-2} are follows from the definition, while the proof of Equations~\eqref{eq-induction-3} and~\eqref{eq-induction-4} are analogous to that of~\cite[Lemma 3.6]{Holkar-Hossain-2024-KMS-states}, hence we omit the details.

Our next goal is to show that the left action \(*_l\) of \(\Cc(G;\A)\) on \(X\) given by Equation~\eqref{eq-left-action} is nondegenerate and can be extended to \(\Cst(G;\A)\). To prove the nondegeneracy we need some preparation.

\begin{lemma}\label{lem-ind-dens}
	\begin{enumerate}
		\item The span of \(\{f*_l\zeta : f\in \Cc(G;\A) \textup{ and } \zeta \in \Cc(G_Z;\A|_{G_Z})\}\) is dense in \(\Cc(G_Z;\A|_{G_Z})\) in the inductive limit topology.
		\item Let \(\zeta, \eta\in \Cc(G_Z;\A|_{G_Z})\). Then the map \(f\mapsto \inpro{\zeta}{f*_l\eta}\) from \(\Cc(G;\A) \to \Cc(G(Z);\A|_{G(Z)})\) is continuous in the inductive limit topology.  
	\end{enumerate}
\end{lemma}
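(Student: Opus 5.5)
For part (1) we use a partition-of-unity argument of the kind used in \cite[Lemma 3.6]{Holkar-Hossain-2024-KMS-states}. Fix \(\phi\in\Cc(G_Z;\A|_{G_Z})\) with compact support \(K\subseteq G_Z\). It suffices to approximate \(\phi\) uniformly by elements \(f*_l\zeta\) whose supports lie in a fixed compact set. Since \(G_Z\) is closed in \(G\) (Lemma~\ref{lem-closed-of-boundary}), sections in \(\Cc(G_Z;\A|_{G_Z})\) extend to sections in \(\Cc(G;\A)\) by the Tietze-type extension for upper semicontinuous Banach bundles.

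First cover \(r(K)\subseteq\base\) by finitely many open sets and choose a partition of unity \(\{e_j\}\subseteq\Cc(\base)\) with \(\sum_j e_j=1\) on \(r(K)\). Units of \(G\) form an open subset, so \(\Cc(\base;\A|_{\base})\subseteq\Cc(G;\A)\), and for \(f\) supported on \(\base\) we have \(f*_l\phi(\gamma)=f(r(\gamma))\phi(\gamma)\). Next choose an approximate identity \((u_\lambda)\) of the \(\Contz(\base)\)-algebra \(\Contz(\base;\A|_{\base})\), taken from \(\Cc(\base;\A|_{\base})\). Each fibre \(\A_\gamma\) is a left Hilbert \(\A_{r(\gamma)}\)-module, so by Lemma~\ref{lem-app-id-hilbert-mod} (applied on the left) we get \(u_\lambda(r(\gamma))\phi(\gamma)\to\phi(\gamma)\) for each \(\gamma\). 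To make this convergence uniform on \(K\), we use upper semicontinuity of the norm together with compactness of \(K\), via a Dini-type argument. Concretely, the function \(\gamma\mapsto\norm{\phi(\gamma)-u_\lambda(r(\gamma))\phi(\gamma)}^2=\norm{\phi^*\phi(\gamma)-\dots}\) is upper semicontinuous, and it can be controlled through \(\inpro{\phi(\gamma)}{\phi(\gamma)}\) in the left inner product \(\phi(\gamma)\phi(\gamma)^*\in\A_{r(\gamma)}\), which reduces uniformity to the \(\Contz(\base)\)-algebra \(\Contz(\base;\A|_{\base})\). Replacing \(u_\lambda\) by \(u_\lambda\cdot h\), with \(h\in\Cc(\base)\) equal to \(1\) on \(r(K)\), keeps all supports inside a fixed compact set. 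This gives inductive-limit convergence \(u_\lambda *_l\phi\to\phi\), with \(\zeta=\phi\).

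For part (2), write out the formula:
\[
\inpro{\zeta}{f*_l\eta}(h)=\sum_{\gamma\in G_{r(h)}}\sum_{\tau\in G^{r(\gamma)}}\zeta(\gamma)^*f(\tau)\eta(\tau^{-1}\gamma h).
\]
Suppose \(f_i\to f\) in the inductive limit topology, so eventually \(\supp f_i\subseteq L\) for a fixed compact \(L\) and \(f_i\to f\) uniformly. Then the support of \(\inpro{\zeta}{f_i*_l\eta}\) lies in the compact set \((\supp\zeta)^{-1}L\,\supp\eta\cap G(Z)\). Because \(G\) is étale, for a fixed compact \(L\) the number of nonzero terms in the double sum is bounded uniformly in \(h\). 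This bound comes from covering \(\supp\zeta\) and \(L\) by finitely many bisections.

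It follows that
\[
\norm{\inpro{\zeta}{(f_i-f)*_l\eta}(h)}\le N\,\norm{\zeta}_\infty\norm{\eta}_\infty\norm{f_i-f}_\infty,
\]
which tends to \(0\) uniformly. Hence the map in (2) is continuous in the inductive limit topology.

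The main obstacle is the uniformity in part (1), since fibrewise convergence of the approximate unit has to be upgraded to uniform convergence on compacta in an upper semicontinuous bundle. If the Dini argument turns out to be awkward, I would instead use a density argument. Sections of the form \(f*_l\zeta\) with \(f\in\Cc(\base;\A|_{\base})\) and \(\zeta\) supported in a bisection form a \(\Cc(\base)\)-submodule that is fibrewise dense, because of saturation, \(\A_{r(\gamma)}\A_\gamma\) being dense in \(\A_\gamma\). Then \cite[Proposition C.24]{Williams2007Crossed-Prod-Book} gives density in the inductive limit topology.
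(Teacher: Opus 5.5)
Your proposal is correct. For part (1) it follows the paper's route: an approximate identity drawn from \(\Cc(\base;\A|_{\base})\), the identity \(f*_l\phi(\gamma)=f(r(\gamma))\phi(\gamma)\), fibrewise convergence from the left Hilbert \(\A_{r(\gamma)}\)-module structure of \(\A_\gamma\), and supports staying inside \(\supp(\phi)\); moreover, your reduction of uniformity to the norm of \(\Contz(\base;\A|_{\base})\) via \(\phi(\gamma)\phi(\gamma)^*\) (after localizing \(\phi\) to bisections) supplies the pointwise-to-uniform step that the paper only asserts. For part (2) the paper simply defers to Lemma~3.20 of Holkar--Hossain, and your direct estimate (supports in \((\supp(\zeta))^{-1}L\,\supp(\eta)\cap G(Z)\), plus a uniform bound on the number of nonzero terms from finitely many bisections covering \(\supp(\zeta)\) and \(L\)) gives the continuity there.
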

\begin{proof}
\noindent (1). Let \((f_n)_{n\in \N}\) be an approximate identity for \(\Cst(G;\A)\) such that each \(f_n \in \Cc(\base;\A|_{\base})\) (such approximate identity always exists by~\cite[Proposition 2.23]{Holkar-Hossain-2024-KMS-states}). Now for \(\zeta \in \Cc(G_Z;\A|_{G_Z})\),  we have 
\[
 f_n*_l\zeta(\gamma) = \sum_{\alpha \in G^{r(\gamma)}} f_n(\alpha) \zeta(\alpha^{-1}\gamma) = f_n(r(\gamma)) \zeta(\gamma)
\]
for \(\gamma \in G_Z\). Since \(\A_{\gamma}\) is a  full left Hilbert \(\A_{r(\gamma)}\)-module and \((f_n(r(\gamma)))_{n\in \N}\) is an approximate identity for \(\A_{r(\gamma)}\) (by~\cite[Lemma 2.14]{Holkar-Hossain-2024-KMS-states}), thus we have 
\[
 f_n*_l\zeta (\gamma) = f_n(r(\gamma)) \zeta(\gamma) \to \zeta(\gamma)
\]
for \(\gamma \in G_Z\) (by Lemma~\ref{lem-app-id-hilbert-mod}). Since \(\supp(f_n*_l\zeta) \subseteq \supp(\zeta)\), which is a compact set, we have 
\[
 \norm{f_n*_l\zeta-\zeta}_{\infty} = \max_{\gamma \in \supp(\zeta)} \norm{f_n(r(\gamma)) \zeta(\gamma) - \zeta(\gamma)} \to 0
\] 
as \(n\to \infty\). Therefore, \(f_n*_l\zeta \to \zeta\) in the inductive limit topology.

\noindent (2). The proof is similar to that of~\cite[Lemma 3.20]{Holkar-Hossain-2024-KMS-states} and hence omitted.
\end{proof}

To show that the left action \(*_l\) is bounded, let \(\varphi\) be a state on \(\Cst(G(Z); \A|_{G(Z)})\). Consider the Hilbert space completion of the inner product space \((X, \inpro{\cdot}{\cdot}_{\varphi})\), where \(\inpro{\zeta}{\eta}_{\varphi} \defeq \varphi(\inpro{\zeta}{\eta}) \) for \(\zeta,\eta\in X\). We denote this Hilbert space completion by~\(X_{\varphi}\).
Define the vector subspace \(Y\subseteq X_{\varphi}\) generated by \(\{f*_l\zeta : f\in \Cc(G;\A) \textup{ and } \zeta \in \Cc(G_Z;\A|_{G_Z})\}\). Lemma~\ref{lem-ind-dens}(1) shows that \(Y\) is dense in \(X\). Define a representation \(L \) of \(\Cc(G;\A)\) on \(Y\) by 
\[
L(f)(\zeta) = f*_l\zeta
\]
for \(f\in \Cc(G;\A)\) and \(\zeta \in \Cc(G_Z; \A|_{G_Z})\). We now show that \(L\) satisfies the three conditions required to be a pre-representation (see Definition~4.1 of~\cite{Muhly-Williams2008Equivalence-and-disinte-thm-Fell-bundle}) of \(\Cc(G;\A)\) on \(Y\).
\begin{enumerate}
	\item \(L\) is nondegenerate follows from  Lemma~\ref{lem-ind-dens}(1). 
	\item  Lemma~\ref{lem-ind-dens}(2) ensures that, the map \(f \mapsto \inpro{\zeta}{L(f)\eta}_{\varphi}\) is continuous in the inductive limit topology for \(f\in \Cc(G;\A)\) and \(\zeta, \eta \in \Cc(G_Z;\A|_{G_Z})\).
	\item Lemma~\ref{left-aaction-adjointable} ensures that the left action is adjointable.
\end{enumerate}
Therefore, the disintegration theorem for Fell bundles~\cite[Theorem 4.13]{Muhly-Williams2008Equivalence-and-disinte-thm-Fell-bundle}
allows us to extends \(L\) be a
nondegenerate bounded representation of \(\Cst(G;\A)\) on the Hilbert space \(X_{\varphi}\). Therefore,
\[
\varphi(\inpro{f*_l\zeta}{f*_l\zeta}) \leq \norm{f}^2_{\Cst(G;\A)} \varphi(\inpro{\zeta}{\zeta})
\]
for all \(f\in \Cc(G;\A)\) and \(\zeta \in \Cc(G_Z;\A|_{G_Z})\). Since \(\varphi\) was an
arbitrary state on \(\Cst(G(Z); \A|_{G(Z)})\), we have 
\[
\inpro{f*_l\zeta}{f*_l\zeta} \leq \norm{f}^2_{\Cst(G;\A)} \inpro{\zeta}{\zeta}.
\]
Therefore, the left action \(*_l\) of \(\Cc(G; \A)\) on  \(\Cc(G_Z;\A|_{G_Z})\)
extends to a
nondegenerate representation of \(\Cst(G;\A)\)
on \(X\).

\begin{theorem}\label{thm-C-st-corr}
Let \(p\colon \A \to G\) be a Fell bundle over a locally compact Hausdorff second countable \'etale groupoid \(G\) and let \(c\colon G \to \R\) be a \(1\)\nb-cocycle. Then \(\Cc(G_Z;\A|_{G_Z})\) completes to a \(\Cst\)\nb-correspondence 
\[
 X\colon \Cst(G;\A) \to \Cst(G(Z); \A|_{G(Z)})
\]
where \(Z\) is the boundary set given by Equation~\eqref{eq-boundary-set}.
\end{theorem}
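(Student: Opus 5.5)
The proof assembles the pieces established above into the standard construction of a \(\Cst\)\nb-correspondence from a pre-correspondence. First, the right Hilbert module structure is taken from the Muhly--Williams equivalence: \(\Cc(G_Z;\A|_{G_Z})\), with the right action~\eqref{eq-equivalence-2} and the inner product~\eqref{eq-equivalence-3}, completes to the right Hilbert \(\Cst(G(Z);\A|_{G(Z)})\)\nb-module \(X\), the right half of the imprimitivity bimodule recalled from~\cite{Ionescu-Williams-Irre-Ind-rep-Fell-bund}. This part requires no new work; it uses the standing assumption that \(G(Z)\) is \'etale, together with Lemma~\ref{lem-closed-of-boundary}, which makes \(G_Z\) and \(G(Z)\) closed, so that \(G_Z\) is locally compact and the equivalence machinery applies.

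Next, for each \(f\in\Cc(G;\A)\), I would show that \(\zeta\mapsto f*_l\zeta\) extends to an adjointable operator on \(X\). By Lemma~\ref{lem-left-act-comp}, in particular~\eqref{eq-induction-4}, this map is right \(\Cc(G(Z);\A|_{G(Z)})\)\nb-linear. By Lemma~\ref{left-aaction-adjointable} it has the formal adjoint \(\zeta\mapsto f^**_l\zeta\). The key input is the inequality
\[
\inpro{f*_l\zeta}{f*_l\zeta}\leq \norm{f}^2_{\Cst(G;\A)}\inpro{\zeta}{\zeta},
\]
which is obtained in the discussion preceding the theorem. One tests against an arbitrary state \(\varphi\) on \(\Cst(G(Z);\A|_{G(Z)})\), forms \(X_\varphi\), and checks the pre-representation axioms using Lemma~\ref{lem-ind-dens} and Lemma~\ref{left-aaction-adjointable}. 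The disintegration theorem~\cite[Theorem 4.13]{Muhly-Williams2008Equivalence-and-disinte-thm-Fell-bundle} then bounds \(L(f)\) by the universal norm. Since positivity in a \(\Cst\)\nb-algebra is detected by states, the inequality follows.

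Given this bound, \(f*_l\) extends by continuity to a bounded module map \(\pi(f)\) on \(X\). The adjoint relation persists by continuity, so \(\pi(f)\in\mathcal{L}(X)\). By~\eqref{eq-induction-1}--\eqref{eq-induction-3}, together with the relation \(\pi(f)^*=\pi(f^*)\), the assignment \(f\mapsto\pi(f)\) is a \(^*\)\nb-homomorphism \(\Cc(G;\A)\to\mathcal{L}(X)\) bounded by the universal norm. It therefore extends to \(\pi\colon\Cst(G;\A)\to\mathcal{L}(X)\), and \((X,\pi)\) is the desired correspondence. Nondegeneracy, if wanted, follows from Lemma~\ref{lem-ind-dens}(1). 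Inductive limit convergence \(f_n*_l\zeta\to\zeta\) implies convergence in the Hilbert module norm, because the inner product~\eqref{eq-equivalence-3} is continuous from the inductive limit topology to the \(\Cst\)\nb-norm.

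The main obstacle is the boundedness step. One must verify that the pre-representation on \(X_\varphi\) genuinely satisfies the hypotheses of the disintegration theorem. In particular, the continuity in Lemma~\ref{lem-ind-dens}(2) is needed with respect to the inductive limit topology, composed with the state \(\varphi\), and one needs the density of \(Y\) in \(X_\varphi\) and not merely in \(X\). I would handle the latter by noting that the norm of \(X_\varphi\) is dominated by the norm of \(X\), which in turn is controlled by the inductive limit topology.
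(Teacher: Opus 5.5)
Your proposal is correct and follows the paper's own argument. The right Hilbert module comes from the Muhly--Williams equivalence, right-linearity and the formal adjoint come from Lemmas~\ref{lem-left-act-comp} and~\ref{left-aaction-adjointable}, and the bound \(\inpro{f*_l\zeta}{f*_l\zeta}\leq\norm{f}^2_{\Cst(G;\A)}\inpro{\zeta}{\zeta}\) is obtained as in the paper, by testing the pre-representation on \(X_\varphi\) against every state \(\varphi\) and invoking the disintegration theorem. Your extra observation that the required density is in \(X_\varphi\), not merely in \(X\) (handled via \(\norm{\cdot}_{X_\varphi}\leq\norm{\cdot}_X\) and inductive-limit control of the \(X\)-norm), tightens a point the paper states loosely.
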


\begin{proof}
The proof follows from the above discussion, Lemma~\ref{lem-left-act-comp} and the discussion on Page~\pageref{eq-equivalence-1}.
\end{proof}
\begin{remark}
	If \(Z=\{x\}\) for \(x\in \base\), then the last theorem recovers~\cite[Theorem 3.22]{Holkar-Hossain-2024-KMS-states}.
\end{remark}
Let \(\pi\colon \Cst(G(Z);\A|_{G(Z)}) \to \Bound(H)\) be a representation. Then~\cite[Proposition 2.66]{Raeburn-Williams1998Morita-equiv-continuous-trace-cst-alg} ensures that there is a induced representation  
\[
 \Ind(\pi) \colon \Cst(G;\A) \to \Bound(X\otimes H)
\]
by \(\Ind (\pi) (f) (\zeta\otimes h) = f*_l\zeta \otimes h\) for \(f\in \Cc(G;\A), \zeta \in \Cc(G_Z;\A|_{G_Z})\) and \(h\in H\). Recall from~\cite[Pages 33, 34 ]{Raeburn-Williams1998Morita-equiv-continuous-trace-cst-alg} that the inner product on the Hilbert space \(X\otimes H\) is given by 
\[
 \inpro{\zeta\otimes h}{\eta\otimes k} = \inpro{h}{\pi(\inpro{\zeta}{\eta})k}.
\]
Therefore, the above discussion and Theorem~\ref{thm-C-st-corr} gives the next corollary.
\begin{corollary}\label{cor-ind-rep}
Let \(p\colon \A \to G\) be a Fell bundle over a locally compact Hausdorff second countable \'etale groupoid \(G\) and let \(c\colon G \to \R\) be a \(1\)\nb-cocycle. Then every representation \(\pi\colon \Cst(G(Z);\A|_{G(Z)}) \to \Bound(H)\) induces a representation \(\Ind(\pi)\) of \(\Cst(G;\A)\) on the Hilbert space \(X\otimes H\). 
\end{corollary}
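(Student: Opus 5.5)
The statement is essentially the standard Rieffel induction applied to the \(\Cst\)\nb-correspondence of Theorem~\ref{thm-C-st-corr}. I would deduce it in three steps.

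\textbf{Step 1: the balanced tensor product.} Given a representation \(\pi\colon \Cst(G(Z);\A|_{G(Z)})\to \Bound(H)\), I form the algebraic tensor product \(\Cc(G_Z;\A|_{G_Z})\odot H\). On it I put the sesquilinear form
\[
\inpro{\zeta\otimes h}{\eta\otimes k}\defeq\inpro{h}{\pi(\inpro{\zeta}{\eta})k}.
\]
This form is positive semidefinite, because \(X\) is a right Hilbert \(\Cst(G(Z);\A|_{G(Z)})\)\nb-module; see the discussion on Page~\pageref{eq-equivalence-1} and \cite[Lemma 2.65]{Raeburn-Williams1998Morita-equiv-continuous-trace-cst-alg}. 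For positivity I use the usual trick. For \(\sum_i\zeta_i\otimes h_i\), the matrix \((\inpro{\zeta_i}{\zeta_j})_{i,j}\) is positive in \(\Mat_n(\Cst(G(Z);\A|_{G(Z)}))\), and \(\pi\) applied entrywise preserves positivity. I then quotient by the null vectors and complete, which gives the Hilbert space \(X\otimes H\), i.e.\ \(X\otimes_{\Cst(G(Z);\A|_{G(Z)})}H\).

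\textbf{Step 2: boundedness of the left action.} For \(f\in \Cc(G;\A)\), I define \(\Ind(\pi)(f)(\zeta\otimes h)\defeq f*_l\zeta\otimes h\). The key input is the inequality obtained just before Theorem~\ref{thm-C-st-corr}:
\[
\inpro{f*_l\zeta}{f*_l\zeta}\leq \norm{f}^2_{\Cst(G;\A)}\inpro{\zeta}{\zeta}.
\]
More generally, the left action extends to a \(^*\)\nb-homomorphism \(\Cst(G;\A)\to \mathcal{L}(X)\) into the adjointable operators. Adjointability is Lemma~\ref{left-aaction-adjointable}, and boundedness is the estimate above. Then the standard estimate \(T\otimes 1\) on \(X\otimes H\) with \(\norm{T\otimes 1}\leq\norm{T}\) shows that \(\Ind(\pi)(f)\) is well defined on the quotient and bounded by \(\norm{f}\). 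For this I use \cite[Proposition 2.66]{Raeburn-Williams1998Morita-equiv-continuous-trace-cst-alg}, whose proof rests on \(\inpro{T\xi}{T\xi}\le\norm{T}^2\inpro{\xi}{\xi}\) in \(\Mat_n\). Hence \(\Ind(\pi)\) extends by continuity to all of \(\Cst(G;\A)\).

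\textbf{Step 3: the algebraic identities.} Multiplicativity and linearity follow from Equations~\eqref{eq-induction-1}--\eqref{eq-induction-3} of Lemma~\ref{lem-left-act-comp}. The \(^*\)\nb-property follows from Lemma~\ref{left-aaction-adjointable}. Compatibility with the balanced relation \(\zeta*_r g\otimes h=\zeta\otimes\pi(g)h\) follows from \eqref{eq-induction-4}. If one wants nondegeneracy as well, it follows from Lemma~\ref{lem-ind-dens}(1).

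The only delicate point is Step 2: one must make sure that the operator inequality holds in the \(\Cst\)\nb-algebra, not merely after applying states. This follows because a positive element is determined by its values under all states, which is exactly the argument already given before Theorem~\ref{thm-C-st-corr}. Everything else is formal once the correspondence \(X\) is in hand.
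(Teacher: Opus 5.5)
Your proposal is correct and takes the same route as the paper. The paper deduces the corollary in one line by combining the \(\Cst\)\nb-correspondence of Theorem~\ref{thm-C-st-corr} with \cite[Proposition 2.66]{Raeburn-Williams1998Morita-equiv-continuous-trace-cst-alg}; your Steps 1--3 spell out what that citation contains (positivity of the balanced form, the bound \(\norm{T\otimes 1}\leq\norm{T}\) for adjointable \(T\), and the identities from Lemmas~\ref{left-aaction-adjointable} and~\ref{lem-left-act-comp}).
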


We fix an approximate identity \((u_n)_{n\in \N}\) for \(\Cst(G(Z);\A|_{G(Z)})\) such that \(u_n \in \Cc(Z;\A|_{Z})\) for all \(n\in \N\). Such approximate identity always exists by~\cite[Proposition 2.23]{Holkar-Hossain-2024-KMS-states}. Note that \(u_n\) can also be think as sections of the bundle \(p\colon \A|_{G_Z} \to G_Z\). 

\begin{lemma}\label{lem-ind-comp}
For \(f\in \Cc(G;\A)\), \(\inpro{u_n}{f*_lu_m} \to f|_{G(Z)}\) pointwise as \(m,n\to \infty\). Here, \(m,n\to \infty\) means one of the iterated limits \(m\to \infty \) and then \(n\to \infty\) or \(n\to \infty\) and then \(m\to \infty\). 
\end{lemma}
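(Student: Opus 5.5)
Let \(h\in G(Z)\) be fixed; the plan is to compute \(\inpro{u_n}{f*_lu_m}(h)\) explicitly and reduce it to \(u_n(r(h))\,f(h)\,u_m(s(h))\). Since \(u_m\in\Cc(Z;\A|_Z)\), viewed as a section of \(\A|_{G_Z}\), is supported on units in \(Z\subseteq G_Z\), equation~\eqref{eq-left-action} gives, for \(\gamma\in G_Z\),
\[
f*_lu_m(\gamma)=\sum_{\eta\in G^{r(\gamma)}} f(\eta)u_m(\eta^{-1}\gamma)=f(\gamma)\,u_m(s(\gamma)),
\]
because \(\eta^{-1}\gamma\) must be a unit, forcing \(\eta=\gamma\). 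Then by~\eqref{eq-equivalence-3},
\[
\inpro{u_n}{f*_lu_m}(h)=\sum_{\gamma\in G_{r(h)}} u_n(\gamma)^*\,(f*_lu_m)(\gamma h),
\]
and again only the unit \(\gamma=r(h)\) contributes, so
\[
\inpro{u_n}{f*_lu_m}(h)=u_n(r(h))^*\,f(h)\,u_m(s(h)),
\]
where \(r(h),s(h)\in Z\) since \(h\in G(Z)\). I would note that we may take the \(u_n\) self-adjoint, as approximate identities are positive, so the adjoint can be dropped.

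Next I would invoke the fibrewise fact already used in the proof of Lemma~\ref{lem-ind-dens}(1). By \cite[Lemma 2.14]{Holkar-Hossain-2024-KMS-states}, for each \(x\in Z\) the sequence \((u_n(x))_n\) is an approximate identity for the fibre \(\Cst\)-algebra \(\A_x\). The fibre \(\A_h\) is an \(\A_{r(h)}\)–\(\A_{s(h)}\)-imprimitivity bimodule, so Lemma~\ref{lem-app-id-hilbert-mod}, applied to \(\A_h\) as a right Hilbert \(\A_{s(h)}\)-module, gives \(f(h)u_m(s(h))\to f(h)\). The same lemma applied to \(\A_h\) as a left Hilbert \(\A_{r(h)}\)-module (equivalently, to the conjugate module) gives \(u_n(r(h))\,a\to a\) for every \(a\in\A_h\).

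For the iterated limits there are two orders. If \(m\to\infty\) first, then for fixed \(n\), left multiplication by \(u_n(r(h))\) is norm-bounded by \(1\), so the expression converges to \(u_n(r(h))f(h)\), which then tends to \(f(h)\) as \(n\to\infty\). The other order is symmetric. Since \(\norm{u_n}\le 1\), one can even get convergence jointly: write
\[
\norm{u_nau_m-a}\le\norm{u_n(au_m-a)}+\norm{u_na-a}\le \norm{au_m-a}+\norm{u_na-a}.
\]
The only step that needs care is the reduction of the sums to the single term at units. This depends on the supports of \(u_n,u_m\) lying in \(Z\subseteq\base\) and on interpreting \(u_m\) as an element of \(\Cc(G_Z;\A|_{G_Z})\). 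I expect that to be the main (though mild) obstacle; the remainder is routine.
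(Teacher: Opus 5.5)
Your proof is correct and follows the paper's argument essentially verbatim: both reduce \(\inpro{u_n}{f*_lu_m}(h)\) to \(u_n(r(h))\,f(h)\,u_m(s(h))\) because the \(u_n\) are supported on units, and then use that \((u_n(x))_n\) is an approximate identity for \(\A_x\) together with the imprimitivity bimodule structure of \(\A_h\). Your treatment of the adjoint, which the paper drops silently, and your joint-convergence estimate are small refinements of the same route; the adjoint is harmless even without assuming positivity, since \(u_n^*a=(a^*u_n)^*\to a\).
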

\begin{proof}
	For \(\gamma\in G(Z)\), we have 
	\begin{align*}
	\inpro{u_n}{f*_lu_m} (\gamma) &= \sum_{\alpha \in G_{r(\gamma)}} u^*_n(\alpha) f*_lu_m(\alpha\gamma) 
	 = u_n(r(\gamma)) f*_lu_m(\gamma) \\
	 &=  u_n(r(\gamma))  \sum_{\eta \in G^{r(\gamma)}} f(\eta) u_m(\eta^{-1}\gamma) 
	 =  u_n(r(\gamma)) f(\gamma) u_m(s(\gamma)).
	\end{align*}
	Since \((u_n(r(\gamma)))_{n\in\N}\) and  \((u_m(s(\gamma)))_{m\in\N}\) are approximation identity for \(\A_{r(\gamma)}\) and \(\A_{s(\gamma)}\) and \(\A_{\gamma}\) is an imprimitivity bimodule from \(\A_{r(\gamma)} \to \A_{s(\gamma)}\) for \(\gamma\in G(Z)\), taking iterated limit as \(m,n \to \infty\) in the last term above, we obtain
	\[
	 \lim_{m,n\to \infty} \inpro{u_n}{f*_lu_m}(\gamma) = \lim_{m,n\to \infty} u_n(r(\gamma)) f(\gamma) u_m(s(\gamma) = f(\gamma).
	\]
\end{proof}

\begin{lemma}\label{lem-unit-vector}
	Let \(\varphi\) be a state on \(\Cst(G(Z); \A|_{G(Z)})\) with GNS representation \((H, \pi, \xi)\). Then the sequence of vectors \((u_n\otimes \xi)_{n\in \N}\) in the Hilbert space \(X\otimes H\) is convergent and we denote the limit by \(a\). The limit \(a\) is a unit vector.
\end{lemma}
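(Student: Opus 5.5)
We show that \((u_n\otimes\xi)_{n\in\N}\) is Cauchy in \(X\otimes H\) and compute the norm of its limit. By the formula for the inner product on \(X\otimes H\) recalled before Corollary~\ref{cor-ind-rep},
\[
\norm{u_n\otimes\xi-u_m\otimes\xi}^2=\inpro{\xi}{\pi\bigl(\inpro{u_n-u_m}{u_n-u_m}\bigr)\xi}=\varphi\bigl(\inpro{u_n-u_m}{u_n-u_m}\bigr).
\]
So everything reduces to computing the right inner product \(\inpro{u_n}{u_m}\in\Cc(G(Z);\A|_{G(Z)})\) from Equation~\eqref{eq-equivalence-3}. Since \(u_n\) is supported on \(Z\subseteq G_Z\), for \(h\in G(Z)\) the sum \(\sum_{\gamma\in G_{r(h)}}u_n(\gamma)^*u_m(\gamma h)\) can only be nonzero when \(\gamma=r(h)\) is a unit and \(\gamma h=h\) is a unit. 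Thus \(\inpro{u_n}{u_m}\) is the section supported on \(Z\) given by \(x\mapsto u_n(x)^*u_m(x)\). In other words, \(\inpro{u_n}{u_m}=u_n^*u_m\) as elements of \(\Cst(G(Z);\A|_{G(Z)})\), where \(\Cc(Z;\A|_Z)\) is viewed as a subalgebra via the convolution (the same computation as in Lemma~\ref{lem-ind-comp}, with \(f\) replaced by a unit-supported section).

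Expanding, the quantity above becomes \(\varphi(u_n^*u_n)-\varphi(u_n^*u_m)-\varphi(u_m^*u_n)+\varphi(u_m^*u_m)\). Since \((u_n)\) is an approximate identity for \(\Cst(G(Z);\A|_{G(Z)})\), the relevant fact is that \(\varphi(u_k^*u_l)\to\norm{\varphi}=1\) as \(k,l\to\infty\). To see this, write \(\varphi(b)=\inpro{\xi}{\pi(b)\xi}\) and use nondegeneracy of the GNS representation, which gives \(\pi(u_l)\xi\to\xi\) in norm. Then
\[
\varphi(u_k^*u_l)=\inpro{\pi(u_k)\xi}{\pi(u_l)\xi}\to\inpro{\xi}{\xi}=1
\]
jointly in \(k,l\). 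Hence the four terms combine to \(1-1-1+1\to0\), the sequence is Cauchy, and it converges to some \(a\in X\otimes H\).

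Finally, \(\norm{a}^2=\lim_n\norm{u_n\otimes\xi}^2=\lim_n\varphi(u_n^*u_n)=\lim_n\norm{\pi(u_n)\xi}^2=\norm{\xi}^2=1\), so \(a\) is a unit vector.

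The only real obstacle is the identification \(\inpro{u_n}{u_m}=u_n^*u_m\). It needs care with supports: we need \(G_{r(h)}\cap Z=\{r(h)\}\), and \(\gamma h\in Z\) forces \(h\) to be a unit. We also need that the elements of \(\Cc(Z;\A|_Z)\) sit inside \(\Cc(G(Z);\A|_{G(Z)})\) as continuous sections, which uses that \(Z=\base[G(Z)]\) is clopen in the étale groupoid \(G(Z)\). Everything else is the standard fact that the GNS vector is a limit of \(\pi(u_n)\xi\).
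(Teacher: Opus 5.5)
Your proposal is correct and follows the same route as the paper: show \((u_n\otimes\xi)_n\) is Cauchy by expanding \(\norm{u_n\otimes\xi-u_m\otimes\xi}^2\) into the four terms \(\inpro{\xi}{\pi(u_k^*u_l)\xi}\), each tending to \(\inpro{\xi}{\xi}\), and then compute \(\norm{a}^2=\lim_n\inpro{\xi}{\pi(u_n^*u_n)\xi}=1\). Your support computation giving \(\inpro{u_n}{u_m}=u_n^*u_m\), and your use of \(\pi(u_l)\xi\to\xi\) to get joint convergence in \(k,l\), fill in details the paper uses without comment.
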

\begin{proof}
	We show the sequence \((u_n\otimes \xi)_{n\in \N}\) is Cauchy. Consider the norm in \(X\otimes H\)
	\begin{multline*}
		\norm{u_n\otimes \xi - u_m\otimes \xi}^2 = \inpro{u_n\otimes \xi - u_m\otimes \xi}{u_n\otimes \xi - u_m\otimes \xi}	\\
		= \inpro{u_n\otimes \xi}{u_n\otimes \xi} - \inpro{u_m\otimes \xi}{u_n\otimes \xi} -\inpro{u_n\otimes \xi}{u_m\otimes \xi} +\inpro{u_m\otimes \xi}{u_m\otimes \xi}\\
		= \inpro{\xi}{\pi(\inpro{u_n}{u_n})\xi} -\inpro{\xi}{\pi(\inpro{u_m}{u_n})\xi} -\inpro{\xi}{\pi(\inpro{u_n}{u_m})\xi} +\inpro{\xi}{\pi(\inpro{u_m}{u_m})\xi}\\
		= \inpro{\xi}{\pi(u^*_nu_n)\xi} -\inpro{\xi}{\pi(u^*_mu_n)\xi} -\inpro{\xi}{\pi(u^*_nu_m)\xi} +\inpro{\xi}{\pi(u^*_mu_m)\xi}\\
		\to \inpro{\xi}{\xi} -\inpro{\xi}{\xi} -\inpro{\xi}{\xi} +\inpro{\xi}{\xi} = 0 \quad \textup{as } m,n \to \infty.
		\end{multline*}
		Therefore, \((u_n\otimes \xi)_{n\in \N}\) is a Cauchy sequence. Let \(a=\lim_{n\to \infty} u_n\otimes \xi\) be the limit in \(X\otimes H\). The norm of the vector \(a\) given by 
		\begin{align*}
		\norm{a}^2 &= \norm{\lim_{n\to \infty} u_n\otimes\xi}^2 = \lim_{n\to \infty} \inpro{u_n\otimes \xi}{u_n\otimes \xi} = \lim_{n\to \infty} \inpro{\xi}{\pi(\inpro{u_n}{u_n})\xi}\\
		 &= \lim_{n\to \infty} \inpro{\xi}{\pi(u^*_nu_n)\xi} = \inpro{\xi}{\xi} = \norm{\xi}^2 =1.
		\end{align*}
\end{proof}

\begin{lemma}\label{lem-ind-rep-form}
Suppose \(\varphi\) is a state on \(\Cst(G(Z); \A|_{G(Z)})\) with the GNS representation \((H, \pi, \xi)\). Then, for \(f\in \Cc(G;\A)\), we have 
\begin{enumerate}
	\item \(f*_lu_n \to f|_{G_Z}\) in \(\Cc(G_Z;\A|_{G_Z})\) in the inductive limit topology;
	\item  \(\Ind (\pi) (f)(a) = f|_{G_Z}\otimes \xi\) in \(X\otimes H\);
	\item \(\inpro{a}{\Ind(\pi)(f)(a)} = \varphi (f|_{G(Z)})\).
\end{enumerate}
 \end{lemma}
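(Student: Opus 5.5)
For part (1), I compute \(f*_lu_n\) pointwise on \(G_Z\). For \(\gamma\in G_Z\), Equation~\eqref{eq-left-action} gives
\[
f*_lu_n(\gamma)=\sum_{\eta\in G^{r(\gamma)}} f(\eta)u_n(\eta^{-1}\gamma).
\]
Since \(u_n\) is supported on \(Z\subseteq\base\), only the term \(\eta=\gamma\) survives, so \(f*_lu_n(\gamma)=f(\gamma)u_n(s(\gamma))\), where \(s(\gamma)\in Z\).

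The supports \(\supp(f*_lu_n)\subseteq \supp(f)\cap G_Z\) all lie in one fixed compact set. Because \(G_Z\) is closed (Lemma~\ref{lem-closed-of-boundary}), this set is compact in \(G_Z\). Hence convergence in the inductive limit topology reduces to showing
\[
\sup_{\gamma\in \supp(f)\cap G_Z}\norm{f(\gamma)u_n(s(\gamma))-f(\gamma)}\to 0 .
\]
Pointwise convergence follows from Lemma~\ref{lem-app-id-hilbert-mod}, since \(\A_\gamma\) is a right Hilbert \(\A_{s(\gamma)}\)-module and \((u_n(s(\gamma)))_n\) is an approximate identity for \(\A_{s(\gamma)}\) (as in the proof of Lemma~\ref{lem-ind-dens}).

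Upgrading pointwise convergence to uniform convergence is the main obstacle. My first attempt is the \(\Cst\)-identity
\[
\norm{f(\gamma)u_n-f(\gamma)}^2=\norm{(u_n-1)f(\gamma)^*f(\gamma)(u_n-1)}
\]
in the unitization, which reduces the problem to the section \(x\mapsto \sum_{\gamma\in G_x} f(\gamma)^*f(\gamma)\) in \(\Cc(Z;\A|_Z)\), restricted to \(Z\). If \((u_n)\) is an approximate identity for \(\Cst(G(Z);\A|_{G(Z)})\) lying in \(\Cc(Z;\A|_Z)\), then \(u_n\) is also an approximate identity for the \(\Contz(Z)\)-algebra \(\Contz(Z;\A|_Z)\). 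Sup-norm convergence \(\norm{g u_n - g}_\infty\to 0\) for \(g\in\Contz(Z;\A|_Z)\) then follows, and it controls each \(f(\gamma)\) uniformly. If this fails, I fall back on a Dini/partition-of-unity argument: cover \(\supp(f)\cap G_Z\) by finitely many bisections and use upper semicontinuity of the norm on each.

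For part (2), I first note that \(a=\lim u_n\otimes\xi\) by Lemma~\ref{lem-unit-vector}, and that \(\Ind(\pi)(f)\) is bounded. Therefore
\[
\Ind(\pi)(f)(a)=\lim_n f*_lu_n\otimes \xi .
\]
Inductive limit convergence implies convergence in the norm of \(X\), because the \(X\)-norm is dominated by the inductive limit topology through the inner product formula~\eqref{eq-equivalence-3} together with the \(I\)-norm bound. Combined with \(\norm{\zeta\otimes\xi}\le\norm{\zeta}_X\norm{\xi}\), part (1) gives \(f*_lu_n\otimes\xi\to f|_{G_Z}\otimes\xi\).

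For part (3), I write
\[
\inpro{a}{\Ind(\pi)(f)a}=\lim_{n}\lim_m \inpro{\xi}{\pi(\inpro{u_n}{f*_lu_m})\xi}
=\lim_n\lim_m \varphi(\inpro{u_n}{f*_lu_m}).
\]
Lemma~\ref{lem-ind-comp} computes \(\inpro{u_n}{f*_lu_m}=u_n(r(\cdot))f(\cdot)u_m(s(\cdot))\) on \(G(Z)\), which equals \(u_n*(f|_{G(Z)})*u_m\) in \(\Cc(G(Z);\A|_{G(Z)})\). Thus the quantity equals \(\varphi(u_n f|_{G(Z)} u_m)\). Since \((u_n)\) is an approximate identity in \(\Cst(G(Z);\A|_{G(Z)})\), this converges in norm to \(\varphi(f|_{G(Z)})\).
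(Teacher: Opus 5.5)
Your proposal is correct and follows the same skeleton as the paper: the pointwise formula \(f*_lu_n(\gamma)=f(\gamma)u_n(s(\gamma))\) on \(G_Z\), then continuity of \(\Ind(\pi)\) together with \(a=\lim_n u_n\otimes\xi\), then a double limit of inner products. Where you differ is in how the limits are justified, and your justifications are more complete than the paper's.

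In (1), the paper asserts \(\max_{\gamma\in\supp(f)}\norm{f(\gamma)u_n(s(\gamma))-f(\gamma)}\to 0\) using only the fibrewise approximate-identity property. Your reduction to \(k=(f^**f)|_Z\in\Cc(Z;\A|_Z)\) is what actually gives uniformity. It uses \(f(\gamma)^*f(\gamma)\le k(s(\gamma))\), hence \(\norm{f(\gamma)u_n(s(\gamma))-f(\gamma)}^2\le 2\norm{k-ku_n}_\infty\), together with the fact that the \(\Cst\)-norm dominates the sup norm on \(\Cc(Z;\A|_Z)\). The latter means \((u_n)\) is an approximate identity for \(\Contz(Z;\A|_Z)\). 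You can drop the Dini fallback, which would need a monotonicity you do not have.

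In (2), your remark is the ingredient the paper uses tacitly: on sections supported in a fixed compact set, the \(X\)-norm is bounded by a constant times the sup norm (via the \(I\)-norm of \(\inpro{\zeta}{\zeta}\)).

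In (3), the paper passes from the merely pointwise Lemma~\ref{lem-ind-comp} straight to \(\inpro{\xi}{\pi(f|_{G(Z)})\xi}\). Pointwise convergence alone does not justify moving the limit inside \(\pi\). Your identification \(\inpro{u_n}{f*_lu_m}=u_n*f|_{G(Z)}*u_m\) in \(\Cc(G(Z);\A|_{G(Z)})\) gives norm convergence in \(\Cst(G(Z);\A|_{G(Z)})\) and closes that step cleanly.
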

 \begin{proof}
 	\noindent (1). For \(f\in \Cc(G;\A)\) and \(\gamma \in G_Z\), we have
 	\[
 	 f*_lu_n(\gamma)  =\sum_{h\in G^{r(\gamma)}} f(h) u_n(h^{-1}\gamma) = f(\gamma) u_n(s(\gamma)).
 	\]
 	The \(\supp(f*_lu_n) \subseteq \supp(f)\), which is a compact set and 
 	\[
 	 \norm{f*_lu_n-f|_{G_Z}}_{\infty} = \max_{\gamma \in \supp(f)} \norm{f(\gamma)u_n(s(\gamma))-f(\gamma)}.
 	\]
 	Since \(\A_{\gamma}\) is a Hilbert \(\A_{s(\gamma)}\)-module and \((u_n(s(\gamma)))_{n\in \N}\) is an approximation identity for \(\A_{s(\gamma)}\), the last term goes to zero as \(n\to \infty\). Therefore, \(f*_lu_n \to f|_{G_Z}\) in \(\Cc(G_Z;\A|_{G_Z})\) in the inductive limit topology.
 	
 	\noindent (2). For \(f\in \Cc(G;\A)\), we have 
 	\[ \Ind(\pi)(f)(a) = \lim_{n\to \infty} \Ind(\pi)(f) (u_n\otimes \xi) = \lim_{n\to \infty} (f*_lu_n)\otimes \xi = f|_{G_Z}\otimes \xi. 
    \] 
    The first equality follows from the continuity of \(\Ind (\pi)\) and the last one follows from part (1) of the current lemma.
    
    \noindent (3). Using the continuity of the induced representation \(\Ind (\pi)\), we have
    \begin{align*}
    \inpro{a}{\Ind(\pi)(f)(a)} &= \lim_{m,n\to \infty} \inpro{u_n\otimes \xi}{\Ind (\pi)(f) (u_m\otimes \xi)} \\
    &=
     \lim_{m,n\to \infty} \inpro{u_n\otimes\xi}{(f*_lu_m)\otimes\xi}
     = \inpro{\xi}{\pi(\inpro{u_n}{f*_lu_m})\xi}
   \end{align*}
   Using Lemma~\ref{lem-ind-comp} the last term can be written as
   \[
    \inpro{\xi}{\pi(f|_{G(Z)})\xi} = \varphi(f|_{G(Z)}).
   \]
 \end{proof}
 
 \begin{proposition}\label{prop-cyclic-vector}
 If \(\varphi\) is a state on \(\Cst(G(Z); \A|_{G(Z)})\) with the GNS representation \((H, \pi, \xi)\). Then \(a\) is a cyclic vector for the induced representation \(\Ind(\pi) \colon \Cst(G;\A) \to \Bound(X\otimes H)\).
 \end{proposition}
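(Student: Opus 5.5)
We show that the closed span of \(\{\Ind(\pi)(f)(a) : f\in \Cc(G;\A)\}\) is all of \(X\otimes H\). By Lemma~\ref{lem-ind-rep-form}(2), \(\Ind(\pi)(f)(a) = f|_{G_Z}\otimes \xi\) for every \(f\in \Cc(G;\A)\). It therefore suffices to show two things. First, the elementary tensors \(\zeta\otimes \pi(g)\xi\), with \(\zeta\in \Cc(G_Z;\A|_{G_Z})\) and \(g\in \Cc(G(Z);\A|_{G(Z)})\), span a dense subspace of \(X\otimes H\). Second, each such tensor lies in the closure of \(\{f|_{G_Z}\otimes\xi : f\in\Cc(G;\A)\}\).

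The first point is standard. Elementary tensors \(\zeta\otimes h\) with \(\zeta\) in the dense subspace \(\Cc(G_Z;\A|_{G_Z})\) of \(X\) and \(h\) in the dense subspace \(\pi(\Cc(G(Z);\A|_{G(Z)}))\xi\) of \(H\) span a dense subspace, since \(\xi\) is cyclic for \(\pi\). This uses the estimate \(\norm{\zeta\otimes h}\le \norm{\zeta}\norm{h}\) in the balanced tensor product.

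For the second point we use the balancing relation \(\zeta\otimes \pi(g)\xi = (\zeta *_r g)\otimes \xi\) in \(X\otimes H\). Note that \(\zeta*_r g\in \Cc(G_Z;\A|_{G_Z})\). So we are reduced to showing that every \(\psi\in \Cc(G_Z;\A|_{G_Z})\) gives a vector \(\psi\otimes\xi\) that is a limit of vectors \(f|_{G_Z}\otimes\xi\). This is an extension problem.

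\(G_Z = s^{-1}(Z)\) is closed in \(G\) by Lemma~\ref{lem-closed-of-boundary}. Since \(\A\to G\) is an upper semicontinuous Banach bundle over a locally compact Hausdorff space, any compactly supported continuous section of \(\A|_{G_Z}\) over the closed set \(G_Z\) extends to some \(f\in\Cc(G;\A)\). This is a Tietze-type extension for upper semicontinuous bundles with enough sections.

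If we prefer to avoid quoting such an extension theorem, there is an alternative. We approximate \(\psi\) in the inductive limit topology by finite sums of the form \(\sum_i \phi_i\cdot f_i|_{G_Z}\), where \(\phi_i\in\Cc(G)\) and \(f_i\in \Cc(G;\A)\). This uses a partition of unity on \(\supp\psi\) together with enough sections, as in the proof of density of \(\Cc\)-sections. Each such sum is again the restriction of an element of \(\Cc(G;\A)\).

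Inductive-limit convergence in \(\Cc(G_Z;\A|_{G_Z})\) implies convergence in the norm of \(X\). This follows from the continuity of the inner product~\eqref{eq-equivalence-3} in the inductive limit topology together with the \(I\)-norm bound on \(\Cc(G(Z);\A|_{G(Z)})\). Hence \(f|_{G_Z}\otimes\xi\to\psi\otimes\xi\) in \(X\otimes H\). This completes the argument.

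The main obstacle I expect is the extension/approximation step. We must justify carefully that restrictions \(f|_{G_Z}\) of sections in \(\Cc(G;\A)\) are dense in \(\Cc(G_Z;\A|_{G_Z})\) for the inductive limit topology, in the upper semicontinuous setting where the norm is only upper semicontinuous. I would first try to cite the extension result for upper semicontinuous Banach bundles from~\cite[Appendix C]{Williams2007Crossed-Prod-Book}, or the corresponding lemma of Muhly--Williams. Failing that, I would use the partition-of-unity approximation: for each point \(\gamma\in\supp\psi\), choose \(f_\gamma\in\Cc(G;\A)\) with \(f_\gamma(\gamma)=\psi(\gamma)\). By upper semicontinuity, \(\norm{f_\gamma-\psi}<\epsilon\) on a neighbourhood of \(\gamma\) in \(G_Z\). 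We then glue these sections with a partition of unity subordinate to a finite subcover.
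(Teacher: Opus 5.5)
Your proposal is correct and follows the paper's proof essentially step for step. Both arguments get density of the tensors \(\eta\otimes\pi(h)\xi\) from cyclicity of \(\xi\), use the balancing identity \(\eta\otimes\pi(h)\xi=(\eta*_rh)\otimes\xi\), extend \(\eta*_rh\) from the closed set \(G_Z\) by the vector-valued Tietze theorem (Muhly--Williams, Proposition A.5), and finish with Lemma~\ref{lem-ind-rep-form}(2). Your partition-of-unity fallback is also sound: it needs only that restrictions \(f|_{G_Z}\) are dense in the inductive limit topology, plus the \(I\)-norm bound to pass to the norm of \(X\), so it lets you avoid the exact extension theorem.
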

 \begin{proof}
To prove \(a\) is a cyclic vector for \(\Ind(\pi)\), we need to show that 
\[
 M=\textup{span}\{\Ind(\pi)(f)(a) : f\in \Cc(G;\A)\}
\] 	
is dense in \(X\otimes H\). Let \(N\) be the vector subspace of \(X\otimes H\) span by the vectors \(\eta\otimes \pi(h)\xi\) where \(\eta\in \Cc(G_Z;\A|_{G_Z})\) and \(h\in \Cc(G(Z); \A|_{G(Z)})\). Since \(\xi\) is a cyclic vector for the representation \(\pi\) of \(\Cst(G(Z); \A|_{G(Z)})\) on the Hilbert space \(H\), we have \(N\) is dense in \(X\otimes H\). We now show that \(N\subseteq M\), which will prove the proposition. For this, we consider \(\eta\otimes \pi(h)\xi \in N\). Since the tensor product \(\otimes\) is balanced over \(\Cst(G(Z); \A|_{G(Z)})\), we have \(\eta\otimes \pi(h)\xi = \eta *_rh\otimes \xi\) where \(\eta*_rh\in \Cc(G_Z;\A|_{G_Z})\). 

We claim that there exists \(f\in \Cc(G;\A)\) such that \(f|_{G_Z} = \eta*_rh\). Then using this claim and Lemma~\ref{lem-ind-rep-form}(2), we have 
\[
(\eta*_rh)\otimes \xi = f|_{G_Z}\otimes \xi = \Ind(\pi)(f)(a) \in M.
\] 
Therefore, \(N\subseteq M\). Thus to complete this proof we only need to establishes the claim. But Since \(G_Z\) is a closed set of \(G\), this claim is true from vector-valued Tietze extension theorem (see~\cite[Proposition A.5]{Muhly-Williams2008Equivalence-and-disinte-thm-Fell-bundle}). This concludes the proof.
 \end{proof}
 
 \begin{theorem}\label{thm-ind-state}
 Let \(p\colon \A \to G\) be Fell bundle over a locally compact Hausdorff second countable \'etale groupoid \(G\) with a \(1\)\nb-cocycle \(c\colon G\to \R\). If \(\varphi\) is a state on \(\Cst(G(Z); \A|_{G(Z)})\) with the GNS representation \((H,\pi, \xi)\), then there exist a state \(\psi_{\varphi} \) on \(\Cst(G;\A)\) such that 
 \[
  \psi_{\varphi}(f) =\varphi(f|_{G(Z)})
 \]
 for \(f\in \Cc(G;\A)\).
 \end{theorem}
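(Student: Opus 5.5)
The natural candidate is the vector state of the induced representation at the unit vector \(a\) from Lemma~\ref{lem-unit-vector}. Concretely, set
\[
\psi_{\varphi}(b) \defeq \inpro{a}{\Ind(\pi)(b)\,a}, \qquad b\in \Cst(G;\A),
\]
where \(\Ind(\pi)\colon \Cst(G;\A)\to \Bound(X\otimes H)\) is the representation from Corollary~\ref{cor-ind-rep}.

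First we check that \(\psi_\varphi\) is a state. It is a vector functional of a \(^*\)\nb-representation, so it is linear, positive and bounded. Its norm is \(\norm{a}^2\) provided \(\Ind(\pi)\) is nondegenerate, and nondegeneracy is exactly what Proposition~\ref{prop-cyclic-vector} supplies, since \(a\) is cyclic. Lemma~\ref{lem-unit-vector} gives \(\norm{a}=1\), so \(\psi_\varphi\) has norm one and is a state.

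If one prefers to avoid the nondegeneracy argument, there is a direct check. Take an approximate identity \((f_n)\) of \(\Cst(G;\A)\) with \(f_n\in\Cc(\base;\A|_{\base})\). By Lemma~\ref{lem-ind-rep-form}(3), \(\psi_\varphi(f_n)=\varphi(f_n|_{G(Z)})\). The restrictions \(f_n|_{Z}\) form an approximate identity for \(\Cst(G(Z);\A|_{G(Z)})\): fibrewise they are approximate units, and the argument of Lemma~\ref{lem-ind-dens}(1) applies with inductive-limit convergence. Hence \(\varphi(f_n|_{G(Z)})\to 1\), and so \(\norm{\psi_\varphi}=1\).

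Second, the formula \(\psi_\varphi(f)=\varphi(f|_{G(Z)})\) for \(f\in\Cc(G;\A)\) is exactly Lemma~\ref{lem-ind-rep-form}(3). Note that \(f|_{G(Z)}\in\Cc(G(Z);\A|_{G(Z)})\) because \(G(Z)\) is closed (Lemma~\ref{lem-closed-of-boundary}), so the right-hand side makes sense.

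The main obstacle lies inside Lemma~\ref{lem-ind-rep-form}(3): the iterated limit there needs to pass from pointwise convergence of \(\inpro{u_n}{f*_lu_m}\) (Lemma~\ref{lem-ind-comp}) to convergence in \(\Cst(G(Z);\A|_{G(Z)})\). I would handle it as follows. The first step is the formula
\[
\inpro{u_n}{f*_lu_m}(\gamma)=u_n(r(\gamma))\,f(\gamma)\,u_m(s(\gamma)),
\]
whose supports all lie in the compact set \(\supp(f)\cap G(Z)\). Next, the fibrewise convergence is uniform on that compact set by the same argument as in Lemma~\ref{lem-ind-rep-form}(1). This gives inductive-limit convergence, hence \(I\)\nb-norm convergence, hence convergence in the universal norm. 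Finally, continuity of \(\varphi\) gives the claimed limit.
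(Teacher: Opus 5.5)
Your proposal is correct and is essentially the paper's proof. In both, \(\psi_\varphi\) is the vector state of \(\Ind(\pi)\) at the unit cyclic vector \(a\) (Lemma~\ref{lem-unit-vector} and Proposition~\ref{prop-cyclic-vector}), and \(\psi_\varphi(f)=\varphi(f|_{G(Z)})\) is exactly Lemma~\ref{lem-ind-rep-form}(3).

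Your two extra remarks make explicit what the paper passes over: cyclicity, hence nondegeneracy, is what gives \(\norm{\psi_\varphi}=\norm{a}^2=1\); and the pointwise limit of Lemma~\ref{lem-ind-comp} must be upgraded, at each stage of the iterated limit, to inductive-limit and hence \(\Cst\)-norm convergence. Note, however, that uniformity on \(\supp(f)\cap G(Z)\) does not follow from fibrewise convergence alone; the paper's Lemma~\ref{lem-ind-rep-form}(1) also asserts it without argument. It is easy to supply: \(\norm{f(\gamma)u-f(\gamma)}^2\leq 2\norm{k(s(\gamma))u-k(s(\gamma))}\) with \(k=(f^* * f)|_{Z}\in\Cc(Z;\A|_Z)\), and \((u_n)\) is an approximate unit for such \(k\) in the supremum norm.
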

 \begin{proof}
 	Since \(\Ind(\pi)\) is a representation of \(\Cst(G;\A)\) on \(X\otimes H\), and \(a\) is a cyclic vector by Lemma~\ref{lem-unit-vector} and Proposition~\ref{prop-cyclic-vector}, the triple \((X\otimes H, \Ind(\pi), a)\) gives us a state \(\psi_{\varphi}\) on \(\Cst(G;\A)\) given by 
 	\[
 	 \psi_{\varphi}(f) = \inpro{a}{\Ind(\pi)(f)a} = \varphi(f|_{G(Z)})
 	\]
 	for \(f\in \Cc(G;\A)\). The second equality follows from Lemma~\ref{lem-ind-rep-form}(3).
 \end{proof}

\section{Ground states on Fell bundle \(\Cst\)-algebras}\label{sec-main-ground-st}
Let \((A,\sigma)\) be a real dynamical system. Recall the definition of ground states from Section~\ref{subsec-KMS-ground-st}. 
Bratteli--Robinson~\cite[Proposition 5.3.19]{Bratteli-Robinson1981Oper-alg-Quan-sta-mech-part-2} give the following equivalent characterization of a ground state, which we will use to prove our main result. A state \(\varphi\) on \(A\) is a \(\sigma\)\nb-ground state if and only if for every compactly supported smooth function \(F\) on \(\R\) supported in \((-\infty, 0)\) we have 
\begin{equation}\label{eq-ground-st-equiv-cond}
\varphi(\sigma_{\check{F}}(a)^*\sigma_{\check{F}}(a)) = 0 \quad \textup{for all } a\in A,
\end{equation}
where \(\check{F}\) is the inverse Fourier transform of \(F\), i.e., 
\[
 \check{F}(x) = \frac{1}{2\pi}\int_{\R} F(y)\e^{-ixy} \dd y
\]
and 
\[
 \sigma_{\check{F}}(a) = \int_{R} \check{F}(t)\sigma_t(a)\dd t.
\]
For a Fell bundle \(\Cst\)\nb-algebra \(A=\Cst(G;\A)\), the last equation became 
\begin{equation}\label{eq-dyna-for-Fell-Four}
\sigma^c_{\check{F}}(f)(\gamma) = \int_{\R} \check{F}(t) \sigma^c_t(f) (\gamma) \dd (t) =  \int_{\R} \check{F}(t) \e^{itc(\gamma)} f(\gamma)\dd (t) = F(c(\gamma)) f(\gamma)
\end{equation}
for \(\gamma \in G\) and \(f\in \Cc(G;\A)\).

\begin{proposition}\label{prop-one-dir-ground}
	Let \(p\colon \A \to G\) be a Fell bundle over a locally compact Hausdorff second countable \'etale groupoid \(G\) and let \(c\colon G\to \R\) be a \(1\)\nb-cocycle. Let \(\sigma^c\) be the real dynamics on \(\Cst(G;\A)\) associated with \(c\) as in Equation~\eqref{equ:real-dyna}. If \(Z\neq \emptyset\) and the boundary groupoid \(G(Z)\) is \'etale, then every state \(\varphi\) on \(\Cst(G(Z);\A|_{G(Z)})\) gives a \(\sigma^c\)-ground state \(\psi_{\varphi}\) on \(\Cst(G;\A)\) given by  
\begin{equation*}
	\psi_{\varphi}(f) = \varphi(f|_{G(Z)})
\end{equation*}
for \(f\in \Cc(G;\A)\).
\end{proposition}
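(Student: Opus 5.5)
By Theorem~\ref{thm-ind-state}, the formula \(\psi_\varphi(f)=\varphi(f|_{G(Z)})\) already defines a state on \(\Cst(G;\A)\). It remains to show that \(\psi_\varphi\) is a \(\sigma^c\)-ground state. I will use the Bratteli--Robinson criterion~\eqref{eq-ground-st-equiv-cond}: for every smooth compactly supported \(F\) with \(\supp F\subseteq(-\infty,0)\), we need \(\psi_\varphi(\sigma^c_{\check F}(a)^*\sigma^c_{\check F}(a))=0\) for all \(a\in\Cst(G;\A)\).

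The map \(a\mapsto \sigma^c_{\check F}(a)\) is bounded, with norm at most \(\norm{\check F}_{L^1}\). The map \(a\mapsto \psi_\varphi(b^*b)\) is continuous. So it suffices to check the identity for \(a=f\in\Cc(G;\A)\). By~\eqref{eq-dyna-for-Fell-Four}, \(g\defeq\sigma^c_{\check F}(f)\) is the section \(\gamma\mapsto F(c(\gamma))f(\gamma)\), which again lies in \(\Cc(G;\A)\) since \(F\circ c\) is continuous and bounded. Hence
\[
\psi_\varphi(g^**g)=\varphi\bigl((g^**g)|_{G(Z)}\bigr),
\]
and it is enough to show that \((g^**g)(h)=0\) for every \(h\in G(Z)\).

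For \(h\in G(Z)\) we have
\[
(g^**g)(h)=\sum_{\alpha\beta=h} g(\alpha^{-1})^*g(\beta)=\sum_{\beta\in G^{r(h)}} g(\beta^{-1}h)^*\,g(\beta),
\]
after reparametrizing. Every term contains the factor \(g(\beta)\) with \(r(\beta)=r(h)\in Z\). By the definition~\eqref{eq-boundary-set} of \(Z\), \(c(\beta)\le 0\). The key point is the sign: is \(F(c(\beta))\) forced to vanish? Since \(F\) is supported in \((-\infty,0)\), vanishing is not automatic from \(c\le 0\). So I should instead use the other factor, or recheck which convention makes \(F\) vanish. Write the sum as \(\sum_{\alpha\in G_{s(h)}}g(\alpha)^*g(\alpha h)\)-type expressions, or equivalently \(g^**g(h)=\sum_{\gamma\in G_{r(h)}} g(\gamma)^*g(\gamma h)\). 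Here \(s(\gamma)=r(h)\in Z\), so \(c(\gamma)\ge 0\) by the second description in~\eqref{eq-boundary-set}. Hence \(F(c(\gamma))=0\), because \(\supp F\subseteq(-\infty,0)\). Therefore \(g(\gamma)=0\) for every \(\gamma\in G_Z\), so \(g|_{G_Z}=0\), and every term in the sum vanishes.

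The main obstacle is keeping the conventions consistent: the sign of the dynamics \(\e^{itc}\), the Fourier transform \(\check F\), and which side of the product lands in \(G_Z\). With the paper's conventions, the computation in~\eqref{eq-dyna-for-Fell-Four} gives exactly \(F(c(\gamma))\). The relevant fact is then that \(g\) vanishes on \(G_Z=s^{-1}(Z)\), where \(c\ge0\). This also shows directly that \(\Ind(\pi)(g)a=g|_{G_Z}\otimes\xi=0\) by Lemma~\ref{lem-ind-rep-form}(2), and hence \(\psi_\varphi(g^*g)=\norm{\Ind(\pi)(g)a}^2=0\). I would present this second version, since it avoids the convolution computation entirely. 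With either argument, criterion~\eqref{eq-ground-st-equiv-cond} holds, and \(\psi_\varphi\) is a \(\sigma^c\)-ground state.
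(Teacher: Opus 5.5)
Your argument is correct, and it is shorter than the paper's. The paper works inside the induced representation. It defines unitaries \(U_t\) on \(X\) by multiplication with \(\e^{itc}\), using \(c=0\) on \(G(Z)\) to see that they respect the right module structure. It checks that \(W_t=U_t\otimes 1\) implements \(\sigma^c\) under \(\Ind(\pi)\) and fixes the cyclic vector \(a\). It then computes \(\Ind(\pi)(\sigma^c_{\check F}(f))a=W_{\check F}\Ind(\pi)(f)a=(F\circ c)|_{G_Z}f|_{G_Z}\otimes\xi=0\).

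You instead use \eqref{eq-dyna-for-Fell-Four}, already recorded in the paper, to note that \(\sigma^c_{\check F}\) maps \(\Cc(G;\A)\) into itself as the multiplier \(F\circ c\), which vanishes on \(G_Z=s^{-1}(Z)\). From there you have two endings:
\begin{enumerate}
\item \((g^**g)|_{G(Z)}=0\), so the formula of Theorem~\ref{thm-ind-state} gives \(\psi_\varphi(g^**g)=0\);
\item Lemma~\ref{lem-ind-rep-form}(2), applied to \(g\) itself, gives \(\Ind(\pi)(g)a=g|_{G_Z}\otimes\xi=0\).
\end{enumerate}

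Your route shows that, once Theorem~\ref{thm-ind-state} has produced the state, the ground-state property is purely a support statement. It dispenses with the construction of \(U_t\) and \(W_t\) and with moving \(\Ind(\pi)\) through the integral. Your continuity reduction from \(\Cst(G;\A)\) to \(\Cc(G;\A)\) also makes explicit a step the paper leaves tacit. The paper's route gives something extra, though it is not needed here: a spatial implementation of \(\sigma^c\) on the GNS space of \(\psi_\varphi\) by multiplication with \(\e^{itc}\), with \(c\geq0\) on \(G_Z\), fixing the cyclic vector. That is the positive-energy picture of a ground state.

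When you write it up, delete the first reparametrization \(\sum_{\beta\in G^{r(h)}}g(\beta^{-1}h)^*g(\beta)\). It is not equal to \((g^**g)(h)\): from \(\alpha\beta=h\) one gets \(\beta\in G_{s(h)}\) and \(\alpha^{-1}=\beta h^{-1}\). This error is what produced the spurious sign worry. The correct expression is \((g^**g)(h)=\sum_{\gamma\in G_{r(h)}}g(\gamma)^*g(\gamma h)\). There \(s(\gamma)=r(h)\in Z\) and \(s(\gamma h)=s(h)\in Z\), so both factors vanish.
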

\begin{proof}
 Let \((H, \pi, \xi)\) be the GNS representation of \(\varphi\). By Corollary~\ref{cor-ind-rep}, \(\Ind(\pi)\colon \Cst(G;\A) \to \Bound(X\otimes H)\) is a representation, where \(X\) is the \(\Cst\)\nb-correspondence from \(\Cst(G;\A) \to \Cst(G(Z); \A|_{G(Z)})\) given by Theorem~\ref{thm-C-st-corr}. Recall from Proposition~\ref{prop-cyclic-vector} that the induced representation \(\Ind(\pi)\) has a cyclic vector \(a = \lim_{n\to \infty} u_n\otimes \xi\). Moreover, Theorem~\ref{thm-ind-state} gives the existence of a state \(\psi_{\varphi}\) on \(\Cst(G;\A)\) such that 
\[
\psi_{\varphi}(f) = \varphi(f|_{G(Z)}).
\]
We need to show that \(\psi_{\varphi}\) is a \(\sigma^c\)-ground state on \(\Cst(G;\A)\).

For \(t\in \R\), define an operator \(U_t\) on \(X\) by 
\[
U_t(g)(\gamma) = \e^{itc(\gamma)}g(\gamma)
\]
for \(g\in \Cc(G_{Z}; \A|_{G_Z})\). Since the \(1\)-cocycle \(c\) vanishes on \(G(Z)\), the operator \(U_t\) compatible with the inner product given by Equation~\eqref{eq-equivalence-3} and right action \(*_r\) given by Equation~\eqref{eq-equivalence-2}. Hence, we obtain a one-parameter group of unitary operators \((U_t)_{t\in \R}\) on the Hilbert \(\Cst(G(Z); \A|_{G(Z)})\)-module \(X\). Moreover, the unitary \(U_t\) implemented the dynamics \(\sigma^c\), i.e.,
\begin{align*}
	U_t(f*_l\zeta) (\gamma) &= \e^{itc(\gamma)}f*_l\zeta(\gamma) 
	= \e^{itc(\gamma)} \sum_{\eta\in G^{r{\gamma}}} f(\eta)\zeta(\eta^{-1}\gamma)\\
	&= \sum_{\eta\in G^{r(\gamma)}} \e^{itc(\eta)} \e^{itc(\eta^{-1}\gamma)} f(\eta)\zeta(\eta^{-1}\gamma) \\
	&=\sum_{\eta\in G^{r(\gamma)}} \sigma^c_t(f)(\eta)U_t(\zeta)(\gamma)
	= \sigma^c_t(f)*_lU_t(\zeta)(\gamma)
\end{align*}
for \(f\in \Cc(G;\A), \zeta\in \Cc(G_Z;\A|_{G_Z})\) and \(\gamma \in G_Z\). Define the unitary \(W_t\defeq U_t\otimes 1\) on the Hilbert space \(X\otimes H\). Then, we have 
\begin{align}\label{eq-Ind-W-imple}
	W_t\Ind(\pi)(f)(\zeta\otimes h) &= W_t((f*_l\zeta)\otimes h) = (\sigma^c_t(f)*_lU_t(\zeta)\otimes) h \nonumber\\
	&= \Ind(\pi) (\sigma^c_t(f))(U_t(\zeta)\otimes h) = \Ind(\pi) (\sigma^c_t(f)) W_t(\zeta\otimes h)
\end{align}
for \(f\in \Cc(G;\A), \zeta \in \Cc(G_Z;\A|_{G_Z})\) and \(h\in H\).
Thus, the dynamics \(\sigma^c\) implemented by \(W_t\). Since \(c=0\) on \(G(Z)\), we have \(U_t(u_n) = u_n\). Thus, we have 
\begin{equation}\label{eq-a-inv-under-W}
	W_ta= W_t\lim_{n\to \infty} u_n\otimes \xi = \lim_{n\to \infty} W_t(u_n\otimes \xi) = \lim_{n\to \infty} u_n\otimes \xi = a.
\end{equation}
 Let  \(F\) be a smooth function supported on \((-\infty, 0)\). Using Lemma~\ref{lem-ind-rep-form}(3), we have 
\begin{align}\label{eq-GNS-psi}
	\psi_{\varphi} (\sigma^c_{\check{F}}(f)^*\sigma^c_{\check{F}}(f)) &= \inpro{a}{\Ind(\pi)(\sigma^c_{\check{F}}(f)^*\sigma^c_{\check{F}}(f))a} \nonumber\\
	&= \inpro{\Ind(\pi)(\sigma^c_{\check{F}}(f))a}{\Ind(\pi)(\sigma^c_{\check{F}}(f))a} =\norm{\Ind(\pi)(\sigma^c_{\check{F}}(f))a}^2.
\end{align}
Equation~\eqref{eq-Ind-W-imple} and the definition of \(\sigma^c_{\check{F}}\) give us 
\begin{align}\label{eq-compt-of-Ind}
	\Ind(\pi) (\sigma^c_{\check{F}}(f)) (a) &= \int_{\R} \check{F}(t) W_t\Ind(\pi)(f)W_t^*(a)\dd t \nonumber\\
	&= \int_{\R} \check{F}(t) W_t\Ind(\pi) (f) (a) \dd t \nonumber\\
	&= W_{\check{F}} \Ind(\pi)(f) (a) 
\end{align}
for \(f\in \Cst(G;\A)\).
Here, the second equality follows from Equation~\eqref{eq-a-inv-under-W} and third one follows from
\[
W_{\check{F}} = \int_{\R} \check{F}(t)W_t\dd t.
\]
 Lemma~\ref{lem-ind-rep-form}(2) give us 
\[
\Ind(\pi)(f)(a) = f|_{G_Z} \otimes \xi
\]
for \(f\in \Cc(G;\A)\). Applying the unitary \(W_t\) we obtain
\begin{equation}\label{eq-formula-for-Ind-W}
	W_t\Ind(\pi)(f)(a) = U_t\otimes 1(f|_{G_Z} \otimes \xi) = U_t(f|_{G_Z}) \otimes \xi = \e^{itc(\cdot)} f|_{G_Z}(\cdot) \otimes \xi.
\end{equation} 
The last equality follows from the definition of \(U_t\). Now using the definition of \(W_{\check{F}}\) we obtain 
\begin{align*}
W_{\check{F}}\Ind(\pi)(f)(a) &= \int_{\R} \check{F}(t)W_t\Ind(\pi)(f)(a) \dd t \\
&= \int_{\R} \check{F} (t) \e^{itc(\cdot)} f|_{G_Z}(\cdot) \otimes \xi \dd t \\
&= F\circ c(\cdot) f|_{G_Z} (\cdot) \otimes \xi = (F\circ c)|_{G_Z}f|_{G_Z} \otimes \xi.
\end{align*}
The second equality follows from Equation~\eqref{eq-formula-for-Ind-W} and third one from the Fourier inversion formula. As the cocycle \(c\geq 0\) on \(G_Z\), we have \( W_{\check{F}}\Ind(\pi)(f)(a) = 0\) for all \(f\in \Cc(G;\A)\). By Equation~\eqref{eq-compt-of-Ind}, \(
\Ind(\pi)(\sigma_{\check{F}}^c(f))a=0\) for \(f\in \Cc(G;\A)\). 
Together with Equation~\eqref{eq-GNS-psi}, this gives
\[
\psi_{\varphi}\bigl(
\sigma_{\check{F}}^c(f)^*
\sigma_{\check{F}}^c(f)\bigr)=0.
\]
By the characterization of ground states in
Equation~\eqref{eq-ground-st-equiv-cond}, \(\psi_{\varphi}\) is a
\(\sigma^c\)-ground state.
\end{proof}

\begin{theorem}\label{thm-main-ground-st}
	Let \(p\colon \A \to G\) be a Fell bundle over a locally compact Hausdorff second countable \'etale groupoid \(G\) and let \(c\colon G\to \R\) be a \(1\)\nb-cocycle. Let \(\sigma^c\) be the real dynamics on \(\Cst(G;\A)\) associated with \(c\) as in Equation~\eqref{equ:real-dyna}. If \(Z\neq \emptyset\) and the boundary groupoid \(G(Z)\) is \'etale, then there is an affine homeomorphism between the state space of \(\Cst(G(Z);\A|_{G(Z)})\) and the \(\sigma^c\)\nb-ground state space of \(\Cst(G;\A)\). This assignment sends a state \(\varphi\) on \(\Cst(G(Z); \A|_{G(Z)})\) to a unique \(\sigma^c\)-ground state \(\psi_{\varphi}\) on \(\Cst(G;\A)\) given by 
	\begin{equation}\label{eq-relation-state-ground}
	\psi_{\varphi}(f) = \varphi(f|_{G(Z)})
	\end{equation}
	for \(f\in \Cc(G;\A)\). Moreover, if \(Z=\emptyset\), then \(\Cst(G;\A)\) has no \(\sigma^c\)\nb-ground states.
\end{theorem}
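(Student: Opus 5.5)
Proposition~\ref{prop-one-dir-ground} already gives the map \(\varphi\mapsto\psi_\varphi\) from the state space of \(\Cst(G(Z);\A|_{G(Z)})\) into the \(\sigma^c\)-ground states. It is affine, since \(\psi_\varphi(f)=\varphi(f|_{G(Z)})\) is affine in \(\varphi\). Injectivity follows from the vector-valued Tietze extension theorem \cite[Proposition A.5]{Muhly-Williams2008Equivalence-and-disinte-thm-Fell-bundle}. Every \(g\in\Cc(G(Z);\A|_{G(Z)})\) is \(f|_{G(Z)}\) for some \(f\in\Cc(G;\A)\), because \(G(Z)\) is closed by Lemma~\ref{lem-closed-of-boundary}. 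Hence \(\psi_\varphi\) determines \(\varphi\) on a dense subalgebra. For continuity in the weak\(^*\) topologies, I will first show that the restriction map \(R\colon f\mapsto f|_{G(Z)}\) extends to a contractive (indeed completely positive) map \(\Cst(G;\A)\to\Cst(G(Z);\A|_{G(Z)})\). Granting this, \(\psi_\varphi=\varphi\circ R\) is weak\(^*\)-continuous in \(\varphi\) on bounded sets, and the inverse is also continuous once surjectivity is shown. For contractivity, Theorem~\ref{thm-ind-state} gives \(|\varphi(f|_{G(Z)})|\le\norm{f}\) for every state \(\varphi\), which suffices.

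The main work is \textbf{surjectivity}: every ground state \(\psi\) has the form \(\psi_\varphi\). Let \(\psi\) be a \(\sigma^c\)-ground state with GNS triple \((L,K,\xi)\). By the Muhly--Williams disintegration theorem, \(L\) is (equivalent to) the integrated form of a strict representation \((\mu,\Hilm,\hat\pi)\), so \(L(f)\) is given by \eqref{equ-vector-valued-int-dis-rep} and \(\xi\) is a section in \(L^2(\Hilm,\mu)\). The key claim is that \(\xi\) vanishes \(\mu\)-a.e.\ off \(Z\).

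To prove the claim I will use \eqref{eq-ground-st-equiv-cond} with \eqref{eq-dyna-for-Fell-Four}. For \(F\) supported in \((-\infty,0)\) we get \(L(\sigma^c_{\check F}(f))\xi=0\), i.e.
\[
\sum_{\gamma\in G^x}F(c(\gamma))\,\pi(f(\gamma))\xi(s(\gamma))\Delta_\mu(\gamma)^{-1/2}=0
\]
for a.e.\ \(x\). I will take \(f\) supported in a small bisection \(U\) on which \(c<-\epsilon\), with \(F\equiv1\) on \(c(U)\). This gives \(\pi(f(\gamma))\xi(s(\gamma))=0\) for a.e.\ \(x=r(\gamma)\), \(\gamma\in U\), and by quasi-invariance for \(\mu\)-a.e.\ \(s(\gamma)\in s(U)\). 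Letting \(f(\gamma)\) range over enough sections, saturation gives that the span of \(\A_\gamma^*\A_\gamma\) is dense in \(\A_{s(\gamma)}\), so \(\pi\) of an approximate unit of \(\A_{s(\gamma)}\) kills \(\xi(s(\gamma))\). Nondegeneracy then forces \(\xi(y)=0\) for a.e.\ \(y\in s(U)\). Countably many such bisections cover \(\{\gamma: c(\gamma)<0\}\), by second countability. Their sources cover \(\{y:\exists\gamma\in G_y,\ c(\gamma)<0\}=\base\setminus Z\) by \eqref{eq-boundary-set}, so \(\xi\) is supported on \(Z\). I expect this to be the main obstacle, mainly the measure-theoretic care with null sets and the use of nondegeneracy of the fibrewise \(\pi\).

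Given the claim, \(\psi(f)=\langle\xi,L(f)\xi\rangle\) only involves \(\gamma\) with \(r(\gamma),s(\gamma)\in Z\), i.e.\ \(\gamma\in G(Z)\). So \(\psi(f)\) depends only on \(f|_{G(Z)}\). The restriction of \(\mu\), \(\Hilm\), \(\hat\pi\) to \(Z\) and \(G(Z)\) defines a (possibly non-quasi-invariant, so I would instead directly define) functional \(\varphi(g)\defeq\psi(f)\) for any extension \(f\) of \(g\). It is well defined by the above, and positive because \(R(f^**f)=R(f)^**R(f)\) when \(c\ge0\) on \(G_Z\)... more safely, I will check positivity directly via the formula: \(\varphi(g^**g)=\norm{L(f)\xi}^2\) restricted to \(Z\). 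Boundedness follows from \(|\varphi(g)|\le\norm{f}\) minimized over extensions, and normalization from approximate units. Then \(\psi=\psi_\varphi\). Finally, if \(Z=\emptyset\), the claim forces \(\xi=0\), contradicting \(\norm{\xi}=1\), so no ground states exist.
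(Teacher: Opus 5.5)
You follow the paper's route for injectivity (Tietze), for disintegrating the GNS representation of \(\psi\), and for showing that \(\xi\) vanishes \(\mu\)-a.e.\ off \(Z\). You handle the vanishing step more carefully than the paper does. One point needs a word: the a.e.\ nondegeneracy of \(\pi|_{\A_y}\) that you invoke follows from nondegeneracy of \(L\), because every \(L(f)\zeta(x)\) lies in \([\pi(\A_x)\Hilm_x]\).

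\textbf{The gap is the boundedness of \(\varphi\) in your last paragraph.} From \(\varphi(g)=\psi(f)\) you only get \(|\varphi(g)|\le\inf\{\norm{f}_{\Cst(G;\A)} : f|_{G(Z)}=g\}\). A state on \(\Cst(G(Z);\A|_{G(Z)})\), however, needs \(|\varphi(g)|\le\norm{g}_{\Cst(G(Z);\A|_{G(Z)})}\). What you established about \(R\) is \(\norm{R(f)}\le C\norm{f}\), so the universal norm over \(G(Z)\) is controlled by that over \(G\). You need the reverse: every \(g\) must admit an extension \(f\) with \(\norm{f}\le C\norm{g}\). Tietze controls only sup norms. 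Working with preimages of \(g\) under the extended \(R\) is circular, since identifying \(\psi\) with \(\varphi\circ R\) beyond \(\Cc(G;\A)\) already presupposes that \(\varphi\) is continuous. As written, \(\varphi\) is only a positive functional on the dense \(^*\)-subalgebra \(\Cc(G(Z);\A|_{G(Z)})\), and such functionals need not extend. Your normalization via approximate units also presupposes \(\norm{\varphi}\le1\).

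The missing observation is that your worry about quasi-invariance is unfounded. For Borel \(h\ge0\) on \(G(Z)\), extended by \(0\) to \(G\), you have \(\nu(h)=\int_Z\sum_{\gamma\in G(Z)^x}h(\gamma)\,\dd\mu(x)\) and \(\nu^{-1}(h)=\int_Z\sum_{\gamma\in G(Z)_x}h(\gamma)\,\dd\mu(x)\). So the measures on \(G(Z)\) induced by \(\mu|_Z\) are just \(\nu|_{G(Z)}\) and \(\nu^{-1}|_{G(Z)}\), which are equivalent with Radon--Nikodym derivative \(\Delta_\mu|_{G(Z)}\). Hence \((\mu|_Z,\Hilm|_Z,\hat\pi|_{\A|_{G(Z)}})\) is a strict representation of \(\A|_{G(Z)}\). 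Its integrated form \(L_Z\) is \(I\)-norm bounded and so extends to \(\Cst(G(Z);\A|_{G(Z)})\). Then \(\varphi(g)=\inpro{\xi}{L_Z(g)\xi}\) gives positivity and \(|\varphi(g)|\le\norm{g}\) at once. Finally \(\norm{\varphi}=1\) follows from \(\varphi(e_n|_Z)=\psi(e_n)\to1\) for an approximate unit \(e_n\in\Cc(\base;\A|_{\base})\). This is exactly the paper's route. Your ``direct'' positivity formula \(\varphi(g^**g)=\int_Z\norm{L(f)\xi(x)}^2\,\dd\mu(x)\) is correct, but it is the same quasi-invariance calculation, so nothing is saved by avoiding the restricted strict representation.

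Two smaller points. First, \(|\varphi(R(f))|\le\norm{f}\) for all states bounds only the numerical radius of \(R(f)\), so it gives \(\norm{R(f)}\le2\norm{f}\). That suffices for weak\(^*\)-continuity. Genuine contractivity and complete positivity follow from \(\pi(R(f))=V^*\Ind(\pi)(f)V\), with the isometry \(Vh=\lim_n u_n\otimes h\). Second, continuity of the inverse should be proved directly. If \(\psi_{\varphi_i}\to\psi_\varphi\), then \(\varphi_i(g)\to\varphi(g)\) on \(\Cc(G(Z);\A|_{G(Z)})\) via Tietze extensions, and density together with \(\norm{\varphi_i}=1\) finishes the argument. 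Do not argue by compactness, since the state space of a non-unital \(\Cst\)-algebra is not weak\(^*\)-compact.
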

\begin{proof}
	Let \(\psi\) be a \(\sigma^c\)-ground state on \(\Cst(G;\A)\). Suppose \((H, L, \xi )\) be the GNS representation of \(\psi\). Applying Muhly--Williams disintegration theorem~\cite{Muhly-Williams2008Equivalence-and-disinte-thm-Fell-bundle} to \(L\), we obtain a strict representation \((\mu, \Hilm, \hat{\pi})\) where \(\mu\) is a quasi-invariant measure on \(\base\), \(\Hilm = \{\Hilm_x\}_{x\in \base}\) is a \(\mu\)-measurable field of Hilbert spaces and \(\hat{\pi}\) is a \(*\)-functor. Moreover, \(L\) is equivalent to the integrated form of \((\mu, \Hilm, \hat{\pi})\) (see Equation~\eqref{equ-int-dis-rep}). We identify \(H\) with the direct integral \(\int_{\base}^{\oplus} \Hilm_x \dd \mu(x)\) and \(\xi\) can be identified with the vector field \((\xi(x))_{x\in \base}\). Then for \(f\in \Cc(G;\A)\), we have
	\begin{equation}\label{eq-dis-int-formula}
    L(f)\xi(x) = \sum_{\gamma\in G^x} \pi(f(\gamma)) \xi(s(\gamma)) \Delta^{-\frac{1}{2}}_{\mu} (\gamma)
	\end{equation}
	where \(\Delta_{\mu}\) is the modular function of the quasi-invariant measure \(\mu\).
	
	We first prove that \(\xi(x) = 0\) for \(\mu\)-a.e. \(x\in \base \setminus Z\). For this, let \(y\in \base\setminus Z\). By definition of the boundary set \(Z\), there exists \(\gamma^{\prime} \in G_y\) such that \(c(\gamma^{\prime}) <0\). Since \(G\) is \'etale, choose an open bisection \(U\) containing \(\gamma^{\prime}\) such that \(r|_{U} \colon U\to r(U)\) and \(s|_{U} \colon U\to s(U)\) are homeomorphisms and \(c(U)\subseteq (c(y)-\delta , c(y)+\delta)\) for some \(0<\delta <-c(\gamma^{\prime})\). Fix a compactly supported smooth function \(F\) on \(\R\) with \(\supp(F) \subset (-\infty, 0)\) and \(F \equiv 1\) on \([c(\gamma^{\prime})-\delta, c(\gamma^{\prime})+\delta]\). For \(f\in \Cc(G;\A)\) with \(\supp(f)\subseteq U\), the Equation~\eqref{eq-dis-int-formula} becomes
	\[
	 L(f)\xi(x) =  \begin{cases}
	 	\pi(f(r^{-1}(x)))\xi(s(r^{-1}(x))) \Delta_{\mu}^{-\frac{1}{2}}(r^{-1}(x)) & \textup{ if } x\in r(U),\\
	 	0 & \textup{ if } x\notin r(U).
	 \end{cases} 
	\]
	Equation~\eqref{eq-dyna-for-Fell-Four} gives us \(\sigma^c_{\check{F}}(f)(\gamma) =  F(c(\gamma))f(\gamma) = f(\gamma)\), thus using the last equation we have  
	\begin{align*}
	\norm{L(\sigma_{\check{F}}(f))\xi}^2 &= \int_{r(U)} \norm{\pi(f(r^{-1}(x)))}^2\norm{\xi(s(r^{-1}(x)))}^2 \Delta^{-1}_{\mu} (r^{-1}(x)) \dd \mu(x)\\
	 &=\int_{s(U)} \norm{\pi(f(s^{-1}(x)))}^2\norm{\xi(x)}^2 \dd \mu(x).
	\end{align*}
	The last equality follows from the fact that \(\mu\) is a quasi-invariant measure with Radon--Nikodym derivative \(\Delta_{\mu}\). 
	
	Since \(\psi\) is a \(\sigma^c\)-ground state, Equation~\eqref{eq-ground-st-equiv-cond} gives us
	\begin{align*}
		0= \psi((\sigma^c_{\check{F}})^*(f)\sigma^c_{\check{F}}(f)) &= \inpro{\xi}{L(\sigma^c_{\check{F}}(f)^*\sigma^c_{\check{F}}(f))\xi} = \inpro{L(\sigma^c_{\check{F}}(f))\xi}{L(\sigma^c_{\check{F}}(f))\xi}\\
		&= \norm{L(\sigma^c_{\check{F}}(f))\xi}^2 = \int_{s(U)} \norm{\pi(f(s^{-1}(x)))}^2\norm{\xi(x)}^2 \dd \mu(x).
		\end{align*}
		Since \(f\) was arbitrary section supported on \(U\), it follows that \(\xi(x) =0 \) for \(\mu\)-a.e. \(x\) in the neighborhood \(s(U)\) of \(y\). Since \(y\in \base \setminus Z\) was arbitrary, we have \(\xi(x) = 0\) for \(\mu\)-a.e. \(x\in \base\setminus Z\). 
		
		Since \(\xi\) is a unit cyclic vector, \(\mu(Z)>0\)  and \(Z\neq \emptyset\). And if \(Z=\emptyset\), there can not exists a ground state, thus we assume \(Z\neq \emptyset\).
		
		Let \(\mu|_{Z}\) be the restriction of the measure \(\mu\) on \(Z\). We consider the bundle of Hilbert spaces \(\Hilm|_{Z} \defeq \{\Hilm_x\}_{x\in Z}\) and identify \(H_Z\defeq \int_{Z}^{\oplus} \Hilm_x \dd\mu(x)\) with a subspace of \(\int_{\base}^{\oplus}  \Hilm_x \dd\mu(x)\). Then \((\Hilm|_{Z}, \hat{\pi}|_Z, \mu|_Z)\) is a strict representation of the Fell bundle \(p\colon\ \A|_{G(Z)} \to G(Z)\), where \(\hat{\pi}|_Z(a) = (r(p(a)), \pi(a), s(p(a)))\) for \(a\in \A|_{G(Z)}\). Since \(\xi(x) = 0\) for \(\mu\)-a.e. \(x\in \base \setminus Z\), we have \(\xi\in \Hilm|_Z\). Thus, the integrated form of the strict representation \(\Hilm|_{Z} \defeq \{\Hilm_x\}_{x\in Z}\) gives a representation of \(\Cst(G(Z); \A|_{G(Z)})\) on the Hilbert space \(H_Z\). This representation together with the cyclic vector \(\xi\) determines the require state \(\varphi\) on \(\Cst(G(Z);\A|_{G(Z)})\). This state is given by
		\begin{equation}\label{eq-dis-int-phi}
		\varphi (g) = \int_{Z} \sum_{\gamma\in G^x} \inpro{\xi(x)}{\pi(g(\gamma))\xi(s(\gamma))} \Delta^{-\frac{1}{2}}_{\mu} (\gamma) \dd \mu(x).
	   \end{equation}
	   for \(g\in \Cc(G(Z); \A|_{G(Z)})\).
    On the other hand for \(f\in\Cc(G;\A)\), we have 
    \begin{multline*}
     \psi(f) = \inpro{\xi}{L(f)\xi} = \int_{\base} \inpro{\xi(x)}{L(f)\xi(x)} \dd\mu(x) = \int_{Z} \inpro{\xi(x)}{L(f)\xi(x)} \dd\mu(x)\\
      = \int_{Z} \sum_{\gamma\in G^x} \inpro{\xi(x)}{\pi(f(\gamma))\xi(s(\gamma))} \Delta^{-\frac{1}{2}}_{\mu} (\gamma)\dd \mu(x)
     = \varphi (f|_{G(Z)}).
      \end{multline*}
   The first equality of the second line follows from Equation~\eqref{eq-dis-int-formula} and the second one follows from Equation~\eqref{eq-dis-int-phi}.
   The state \(\varphi\) is unique because each \(f\in \Cc(G(Z); \A|_{G(Z)})\) can be extended to a compactly supported section of the bundle \(p\colon \A\to G\) (by vector-valued Tietze extension theorem~\cite[Proposition A.5]{Muhly-Williams2008Equivalence-and-disinte-thm-Fell-bundle}), thus \(\varphi\) is the only state satisfying Equation~\eqref{eq-relation-state-ground}.

 Conversely, Proposition~\ref{prop-one-dir-ground} shows that every
 state \(\varphi\) on \(\Cst(G(Z);\A|_{G(Z)})\) gives a
 \(\sigma^c\)\nb-ground state \(\psi_{\varphi}\) on \(\Cst(G;\A)\)
 satisfying
 \[
 \psi_{\varphi}(f)=\varphi(f|_{G(Z)})
 \]
 for \(f\in \Cc(G;\A)\). Also, every \(\sigma^c\)-ground
 state arises in this way and that the corresponding state
 \(\varphi\) is unique. Thus, this correspondence is bijective. The correspondence  is affine, as
 \[
 \psi_{\lambda\varphi_1+(1-\lambda)\varphi_2}
 =
 \lambda\psi_{\varphi_1}
 +
 (1-\lambda)\psi_{\varphi_2}
 \]
 for \(0\leq \lambda \leq 1\).
 Moreover, the correspondence is weak\(^*\)-continuous by the Equation~\eqref{eq-relation-state-ground}.
 Since the state spaces are weak\(^*\)-compact and Hausdorff, the
 bijection is a homeomorphism. This completes the proof.
 \end{proof}
 
 \begin{remark}
 	Proposition~4.11 of~\cite{Holkar-Hossain-2024-KMS-states} provides a natural class of states on \(\Cst(G(Z); \A|_{G(Z)})\). Indeed, any pair \((\mu,\{\varphi_x\}_{x\in Z})\), consisting of a probability measure \(\mu\) on \(Z\) and a \(\mu\)\nb-measurable field of states \(\{\varphi_x\}_{x\in Z}\) on \(\Cst(G_x^x;\A|_{G_x^x})\), gives rise to an integrated state on \(\Cst(G(Z);\A|_{G(Z)})\). Consequently, by Theorem~\ref{thm-main-ground-st}, each such integrated state determines a \(\sigma^c\)\nb-ground state on \(\Cst(G;\A)\).
 \end{remark}

\noindent Recall the definition of KMS\(_{\infty}\) states from Section~\ref{subsec-KMS-ground-st}. 
\begin{corollary}\label{coro-infi-stat}
	Let \(p\colon \A \to G\) be a Fell bundle over a locally compact Hausdorff second countable \'etale groupoid \(G\) and let \(c\colon G\to \R\) be a \(1\)\nb-cocycle. Let \(\sigma^c\) be the real dynamics on \(\Cst(G;\A)\) associated with \(c\) as in Equation~\eqref{equ:real-dyna}. Assume \(Z\neq \emptyset\) and \(c^{-1}(0)\) is \'etale. If the \(\sigma^c\)-ground state \(\psi_{\varphi}\) on \(\Cst(G;\A)\) is a KMS\(_{\infty}\) state, then \(\varphi\) is tracial.
\end{corollary}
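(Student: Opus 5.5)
Write \(\psi_\varphi=\lim_i\varphi_i\) in the weak\(^*\) topology, where each \(\varphi_i\) is a KMS\(_{\beta_i}\) state and \(\beta_i\to\infty\). By Theorem~\ref{thm-KMS-state}, each \(\varphi_i\) has the form
\[
\varphi_i(f)=\int_{\base}\sum_{\gamma\in G^x_x}\varphi_{i,x}(f(\gamma)\delta_\gamma)\,\dd\mu_i(x).
\]
From this formula I will extract the following key fact: \(\varphi_i(f)\) depends only on the restriction of \(f\) to the isotropy bundle, and it vanishes whenever \(f\) is supported on a bisection avoiding isotropy. I will then establish a twisted trace identity for KMS states and push it to the limit. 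The identity is that for \(f,g\in\Cc(G;\A)\) with \(g\) supported in a bisection \(U\) on which \(c\) is constant, equal to \(c_U\), we have
\[
\varphi_i(f*g)=\e^{-\beta_i c_U}\varphi_i(g*f).
\]
This holds because such a \(g\) is analytic, with \(\sigma_{i\beta}(g)=\e^{-\beta c_U}g\).

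In general \(c\) is not locally constant, so I will use the hypothesis that \(c^{-1}(0)\) is \'etale. The plan is to reduce to sections supported on \(c^{-1}(0)\). I approximate \(g\in\Cc(c^{-1}(0);\A)\), extended by Tietze (\cite[Proposition A.5]{Muhly-Williams2008Equivalence-and-disinte-thm-Fell-bundle}), by sections \(g_\epsilon\) supported where \(|c|<\epsilon\). Then \(\sigma_{i\beta}(g_\epsilon)\) is close to \(g_\epsilon\) only up to a factor \(\e^{\beta\epsilon}\), and this is the crux. Since \(\beta_i\to\infty\), the estimate is not uniform. I will therefore avoid it as follows: on \(c^{-1}(0)\), which is an open subgroupoid when it is \'etale (as in \cite[Lemma 1.3]{Laca-Larsen-Neshveyev-Ground-states}), the KMS condition gives exactly \(\varphi_i(f*g)=\varphi_i(g*f)\) for \(g\in\Cc(c^{-1}(0);\A)\). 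I want to check directly, using the quasi-invariance of \(\mu_i\) with derivative \(\e^{-\beta_i c}\) together with condition (2) of Theorem~\ref{thm-KMS-state}, that the cocycle factor disappears when \(g\) lives on \(c^{-1}(0)\). Because elements \(\gamma\) with \(c(\gamma)=0\) contribute Radon--Nikodym factor \(1\), the identity \(\varphi_i(f*g)=\varphi_i(g*f)\) should follow for all \(f\in\Cc(G;\A)\). This computation is the step I expect to be the main obstacle; if it fails I fall back on an approximation argument using analytic elements \(\sigma_{\check F}\) with \(F\) supported near \(0\).

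Passing to the weak\(^*\) limit gives \(\psi_\varphi(f*g)=\psi_\varphi(g*f)\) for all \(g\in\Cc(c^{-1}(0);\A)\) and \(f\in\Cc(G;\A)\). Now take \(h,k\in\Cc(G(Z);\A|_{G(Z)})\). Since \(c=0\) on \(G(Z)\), we have \(G(Z)\subseteq c^{-1}(0)\), and \(G(Z)\) is closed in \(c^{-1}(0)\) by Lemma~\ref{lem-closed-of-boundary}. By Tietze, extend \(h\) to \(\tilde h\in\Cc(G;\A)\) and \(k\) to \(\tilde k\in\Cc(c^{-1}(0);\A)\). Then
\[
(\tilde h*\tilde k)|_{G(Z)}=h*k,
\]
because for \(\gamma\in G(Z)\) with \(\gamma=\alpha\beta\), we have \(\alpha,\beta\in G(Z)\) whenever \(\tilde h(\alpha)\tilde k(\beta)\) contributes nontrivially to the part seen by \(\psi_\varphi\). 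Indeed, \(\psi_\varphi\) only sees \(G(Z)\), and an element \(\beta\in c^{-1}(0)\) with \(r(\beta)\) or \(s(\beta)\) in \(Z\) has both endpoints in \(Z\), since \(c(\beta)=0\) and \(Z\) is invariant under \(c^{-1}(0)\). This invariance needs a short check: if \(s(\beta)\in Z\) and \(\eta\in G_{r(\beta)}\), then \(c(\eta)=c(\eta\beta)\ge0\). The same reasoning handles \(\tilde k*\tilde h\). Applying Theorem~\ref{thm-main-ground-st} then gives
\[
\varphi(h*k)=\psi_\varphi(\tilde h*\tilde k)=\psi_\varphi(\tilde k*\tilde h)=\varphi(k*h),
\]
and density shows that \(\varphi\) is tracial.
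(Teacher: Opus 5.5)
Your argument is correct and is essentially the paper's proof. Elements of \(\Cc(c^{-1}(0);\A|_{c^{-1}(0)})\) (with \(c^{-1}(0)\) clopen) are \(\sigma^c\)-fixed and so centralize every KMS\(_{\beta_i}\) state, this passes to the weak\(^*\) limit, and the \(c^{-1}(0)\)-invariance of \(Z\) together with Tietze extension transfers the trace property to \(\varphi\). The only difference is packaging: the paper uses the surjective restriction homomorphism \(\rho\colon \Cst(c^{-1}(0);\A|_{c^{-1}(0)})\to\Cst(G(Z);\A|_{G(Z)})\) with both factors in \(c^{-1}(0)\), whereas you check \((\tilde h*\tilde k)|_{G(Z)}=h*k\) directly with only one factor there.

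The step you flag as the main obstacle is immediate. Take \(a=g\) in the KMS condition and use \(\sigma^c_{i\beta}(g)=g\). So the detour through Theorem~\ref{thm-KMS-state} and the fallback approximation are unnecessary. Your twisted identity also has the sign of the exponent reversed, which is harmless at \(c_U=0\).
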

\begin{proof}
Let \(\varphi\) be a state on \(\Cst(G(Z); \A|_{G(Z)})\) and let \(\psi_{\varphi}\) be the associated \(\sigma^c\)-ground state given by Theorem~\ref{thm-main-ground-st}. Assume \(\psi_{\varphi}\) is a KMS\(_{\infty}\) state. The definition of the dynamics \(\sigma^c\) and the KMS condition ensures that the restriction of \(\psi_{\varphi}\) on \(\Cc(c^{-1}(0); \A|_{c^{-1}(0)}) \subseteq \Cst(G;\A)\) is tracial. Since the boundary groupoid \(G(Z)\) is the reduction of \(c^{-1}(0)\) on the boundary set \(Z\), which is also \(c^{-1}(0)\)\nb-invariant, the restriction map \(f\mapsto f|_{G(Z)}\) on \(\Cc(c^{-1}(0))\) can be extended to homomorphism \(\rho\colon \Cst(c^{-1}(0); \A|_{c^{-1}(0)}) \to \Cst(G(Z); \A|_{G(Z)})\). By vector-valued Tietze extension theorem (see~\cite[Proposition A.5]{Muhly-Williams2008Equivalence-and-disinte-thm-Fell-bundle}) gives us the surjectivity of \(\rho\). Since the restriction of \(\psi_{\varphi}\) to \(\Cst(c^{-1}(0); \A|_{c^{-1}(0)})\) is same as \(\varphi\circ \rho\), the state \(\varphi\) is tracial.
\end{proof}

\begin{remark}\label{rmk-Infinity}
	Corollary~\ref{coro-infi-stat} shows that every KMS\(_\infty\) state arising from
	the boundary set \(Z\) restricts to a tracial state on
	\(\Cst(G(Z);\A|_{G(Z)})\). In particular, whenever
	\(\Cst(G(Z);\A|_{G(Z)})\) admits non-tracial states, the
	corresponding ground states are not KMS\(_\infty\) states.
\end{remark}

\section{Crystallization of Fell bundle \(\Cst\)-algebras}\label{sec-crystal}
In this section, we first recall the crystallization of a \(\Cst\)\nb-dynamical system \((A, \sigma)\) and its relation to ground states, following~\cite{Laca-Neshveyev-Yamashita-Crystallization}. We then identify the crystal associated to the \(\Cst\)\nb-algebra of a Fell bundle.

Let \((A,\sigma)\) be a real \(\Cst\)-dynamical system. For \(\lambda \in \R\), define
\[
A_{\lambda} \defeq \{a\in A: \sigma_t(a) =\e^{it\lambda} a \textup{ for all } t\in\R\}.
\]
The dynamics \(\sigma\) is called \emph{almost periodic} if the \(^*\)-subalgebra generated by \(A_{\lambda}\) for \(\lambda \in \R\) is dense in \(A\). We assume the dynamics \(\sigma\) is almost periodic. Let \(I_{\lambda}\) be the ideal \(\overline{A_{\lambda}A^*_{\lambda}}\) of the fixed-point algebra \(A_0\) and set 
\begin{equation}\label{eq-ideal-crystal}
I = \overline{\sum_{\lambda>0} I_{\lambda}}.
\end{equation}
The \(\Cst\)-algebra  \(A_c\defeq A_0/I\) is called the \emph{crystal} of \((A,\sigma)\) (see~\cite[Defintion 1.1]{Laca-Neshveyev-Yamashita-Crystallization}).

Let \(\Gamma \subset \mathbb{R}\) be the additive subgroup generated by those \(\lambda \in \mathbb{R}\) for which \(A_\lambda \neq 0\). We consider \(\Gamma\) as a discrete group. The inclusion \(\Gamma \hookrightarrow \R\) induces, by duality, a homomorphism from \(\R \to \widehat{\Gamma}\). The image of \(\R\) is dense in \(\hat{\Gamma}\). Thus, the map \(\R \to A\) by  \(t\mapsto \sigma_t(a)\) extends to an action of \(\widehat{\Gamma}\) on \(A\). We continue to denote this action by~\(\sigma\). This action gives rise to a conditional expectation \(E \colon A \to A_0\) defined by
\[
E(a)=\int_{\widehat{\Gamma}} \sigma_{\chi}(a) \dd\chi,
\]
Moreover, \(
E(A_\lambda)=0\) for \(\lambda\neq 0\).

\begin{proposition}[{\cite[Proposition 1.2 ]{Laca-Neshveyev-Yamashita-Crystallization}}]
Let \((A, \sigma)\) be an almost periodic dynamics. Then there is a completely positive contraction \(\theta \colon A\to A_c\) given by \(\theta (a) = E(a)+I\). If \(A_c\neq 0\), then \(\varphi\mapsto \varphi\circ\theta\) gives a bijective correspondence between the state space of \(A_c\) and the \(\sigma\)-ground state space of \(A\). If \(A_c=0\), then \(A\) has no \(\sigma\)-ground states.
\end{proposition}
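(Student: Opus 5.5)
The plan is to prove the Laca--Neshveyev--Yamashita statement directly from the spectral characterization of ground states in Equation~\eqref{eq-ground-st-equiv-cond}, using the almost periodic structure. Throughout, \(E\colon A\to A_0\) is the conditional expectation obtained by averaging over the compact group \(\widehat{\Gamma}\), and \(I\) is the ideal of \(A_0\) in Equation~\eqref{eq-ideal-crystal}. Since \(E\) is a conditional expectation and the quotient map \(A_0\to A_c\) is a \(^*\)\nb-homomorphism, the composite \(\theta=q\circ E\) is a completely positive contraction. It remains to show that \(\varphi\mapsto\varphi\circ\theta\) is a bijection onto the ground states.

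\textbf{Every \(\varphi\circ\theta\) is a ground state.} For \(a\in A_\lambda\) we have \(\sigma_z(a)=\e^{iz\lambda}a\). For \(b\in A_\mu\) and \(x\in A\), the number \(\varphi\circ\theta(x\sigma_z(b))\) equals \(\e^{iz\mu}\varphi(q(E(xb)))\). This is bounded on the upper half plane when \(\mu\geq 0\). If \(\mu<0\), then \(E(xb)\) equals \(E(x_{-\mu}b)\), where \(x_{-\mu}\) is the spectral component of \(x\) in \(A_{-\mu}\). We have \(x_{-\mu}b\in A_{-\mu}A_{\mu}\), and
\[
x_{-\mu}b\in \overline{A_{-\mu}A_{-\mu}^*}=I_{-\mu}\subseteq I,
\]
since \(A_{\mu}=A_{-\mu}^*\). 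Hence the value vanishes. By density of the spectral subspaces and linearity, this gives the ground state condition, first for \(x\) in the algebraic span and then by continuity. Alternatively, one can check Equation~\eqref{eq-ground-st-equiv-cond} directly: \(\sigma_{\check F}(a)\) has spectrum in \((-\infty,0)\), so \(E(\sigma_{\check F}(a)^*\sigma_{\check F}(a))\) is a sum of terms \(a_\lambda^*a_\lambda\) with \(\lambda<0\). Each such term lies in \(I_{-\lambda}\), so it is killed by \(q\).

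\textbf{Every ground state \(\psi\) factors through \(\theta\).} First, \(\psi\) is \(\sigma\)\nb-invariant, hence \(\widehat{\Gamma}\)\nb-invariant by density of \(\R\) in \(\widehat\Gamma\). Therefore \(\psi=\psi\circ E\). Next, apply Equation~\eqref{eq-ground-st-equiv-cond} to \(a^*\) with \(a\in A_\lambda\), \(\lambda>0\), choosing \(F\) equal to \(1\) near \(-\lambda\). This gives \(\psi(aa^*)=0\). By Cauchy--Schwarz, \(\psi\) then vanishes on \(A_0aa^*A_0\), hence on \(I_\lambda\), and by continuity on \(I\). So \(\psi|_{A_0}\) descends to a state \(\varphi\) on \(A_c\) with \(\psi=\varphi\circ\theta\). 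In particular, if \(A_c=0\), then \(\psi|_{A_0}=0\), which contradicts \(\psi\circ E=\psi\) being a state; so no ground state exists.

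\textbf{Injectivity and well-definedness.} Injectivity follows because \(\theta\) is surjective onto \(A_c\), as \(\theta(A_0)=A_0/I\). The main obstacle is Step~1 on general elements rather than spectral ones. There I would rely on Equation~\eqref{eq-ground-st-equiv-cond} with the Fourier-multiplier computation, which only requires that \(\sigma_{\check F}(a)\) lie in the closed span of the \(A_\lambda\) with \(\lambda<0\). I would verify this first on the algebraic span and extend by norm continuity of \(\sigma_{\check F}\).
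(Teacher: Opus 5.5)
The paper does not prove this proposition; it quotes it from Laca--Neshveyev--Yamashita, so there is no internal proof to compare against. Your argument is a correct, self-contained proof along the standard lines. You characterize the ground states as exactly the states with \(\psi=\psi\circ E\) that vanish on \(I\), and you get this from the Fourier criterion \eqref{eq-ground-st-equiv-cond}, together with \(\sigma_{\check F}(a)=F(\lambda)a\) for \(a\in A_\lambda\) and \(A_\lambda^*=A_{-\lambda}\).

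There are two small points to tighten.

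\textbf{Step 1, first version.} Your first version checks boundedness of \(z\mapsto\psi(x\sigma_z(b))\) only for spectral \(b\) and then appeals to ``density and continuity''. This does not by itself reach an arbitrary analytic \(b\). Boundedness on the upper half-plane is not preserved under norm limits in \(b\), and \(\sigma_z(b_n)\) need not converge to \(\sigma_z(b)\) when \(\textup{Im}(z)>0\). The alternative you give via \eqref{eq-ground-st-equiv-cond} is what actually closes Step 1. There the condition is a norm-continuous equality. Moreover, \(\sigma_{\check F}(a)\) lies in the closed span of the \(A_\lambda\) with \(\lambda<0\), by approximating \(a\) from the span of the spectral subspaces. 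That span is the whole \(^*\)-algebra they generate, since \(A_\lambda A_\mu\subseteq A_{\lambda+\mu}\).

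\textbf{Norm one.} You should record why the functionals involved are states.
\begin{enumerate}
\item \(\varphi\circ\theta\) has norm one because \(\theta|_{A_0}\) is the quotient map onto \(A_c\).
\item The functional \(\varphi\) induced by a ground state \(\psi\) has norm one because \(\psi=\psi\circ E\) and \(E\) is a contraction onto \(A_0\), so \(\|\psi|_{A_0}\|=\|\psi\|=1\).
\end{enumerate}
The second is the same observation you already use to rule out ground states when \(A_c=0\).
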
  

To characterize the crystal of a Fell bundle \(\Cst\)-algebra \(\Cst(G;\A)\), we need the following result of Ionescu--Williams~\cite[Theorem 3.7]{Ionescu-Williams2012Remarks-on-ideal-stru-Fell-bundle-cst-alg}.

\begin{proposition}[Ionescu--Williams]\label{prpo-exact-seq-Will}
Let \(p\colon \A\to G\) be a Fell bundle over a locally compact Hausdorff second countable groupoid \(G\) and let \(Y\subseteq \base\) be an open invariant subset. Then there exists a short exact sequence
\[
0 \longrightarrow \Cst(G(Y); \A|_{G(Y)}) \xlongrightarrow{\iota} \Cst(G;\A) \xlongrightarrow{r} \Cst(G(\base\setminus Y); \A|_{G(\base\setminus Y)})\longrightarrow 0.
\]
 \end{proposition}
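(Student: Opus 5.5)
The plan is to obtain the exact sequence from the closed restriction map together with a density argument identifying its kernel with the ideal coming from \(G(Y)\). Write \(F\defeq\base\setminus Y\). Since \(Y\) is open and invariant, \(F\) is closed and invariant. Hence \(G(Y)=G|_Y\) is an open subgroupoid and \(G(F)=G|_F\) is a closed subgroupoid, and \(G\) is their disjoint union.

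\textbf{Step 1: the inclusion \(\iota\).} Extension by zero gives a \(^*\)\nb-homomorphism \(\Cc(G(Y);\A|_{G(Y)})\to\Cc(G;\A)\). Convolution is preserved because, by invariance, any factorization \(\gamma=\alpha\beta\) with \(\gamma\in G(Y)\) has \(\alpha,\beta\in G(Y)\). This map is \(I\)\nb-norm contractive, so every \(I\)\nb-norm bounded representation of \(\Cc(G;\A)\) restricts to one of \(\Cc(G(Y);\A|_{G(Y)})\). Therefore \(\iota\) extends to \(\Cst(G(Y);\A|_{G(Y)})\) and its image is an ideal \(J\) of \(\Cst(G;\A)\). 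Ideality follows since \(\Cc(G;\A)*\Cc(G(Y))\subseteq\Cc(G(Y))\), again by invariance.

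\textbf{Step 2: injectivity of \(\iota\).} Let \(\pi\) be any representation of \(\Cst(G(Y);\A|_{G(Y)})\). Disintegrate it via \cite[Theorem 4.13]{Muhly-Williams2008Equivalence-and-disinte-thm-Fell-bundle} into a strict representation \((\mu,\Hilm,\hat\pi)\) over \(Y\). Regard \(\mu\) as a measure on \(\base\) supported in \(Y\); it remains quasi-invariant because \(Y\) is invariant. Integrating via Equation~\eqref{equ-int-dis-rep} then gives a representation of \(\Cst(G;\A)\) extending \(\pi\). Consequently the universal norms agree and \(\iota\) is isometric.

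\textbf{Step 3: the restriction map \(r\).} Restriction \(f\mapsto f|_{G(F)}\) is a \(^*\)\nb-homomorphism, since for \(\gamma\in G(F)\) all factorizations lie in \(G(F)\). It is \(I\)\nb-norm decreasing, so it extends to \(\Cst(G;\A)\). It is surjective by the vector-valued Tietze extension theorem \cite[Proposition A.5]{Muhly-Williams2008Equivalence-and-disinte-thm-Fell-bundle}, which has image dense and closed. Moreover \(r\circ\iota=0\), so \(J\subseteq\ker r\).

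\textbf{Step 4: \(\ker r\subseteq J\) (main obstacle).} I would argue via representations. Take an irreducible (or any) representation \(L\) of \(\Cst(G;\A)\) vanishing on \(J\) and disintegrate it as \((\mu,\Hilm,\hat\pi)\). Vanishing on \(\Cc(G(Y))\) forces the Hilbert bundle to be \(\mu\)\nb-a.e. zero over \(Y\), using the approximate-unit argument on bisections as in the proof of Theorem~\ref{thm-main-ground-st}. Hence \(\mu\) may be taken to be concentrated on \(F\), and \(L\) factors through restriction to \(G(F)\), that is, through \(r\). Thus every representation of \(\Cst(G;\A)/J\) factors through \(r\), which gives \(\ker r\subseteq J\).

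The delicate points are these: \(\ker r\) must be computed in the completion rather than on \(\Cc\); \(\mu\) must be checked to be quasi-invariant when restricted to \(F\); and \(L(f)\) must depend only on \(f|_{G(F)}\), which follows from Equation~\eqref{equ-vector-valued-int-dis-rep} once \(\xi\) lives over \(F\). Together these steps give exactness at the middle term.
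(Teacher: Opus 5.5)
Your proof is correct, but it takes a different route from the paper. The paper gives no direct argument. It obtains the sequence as a special case of Ionescu--Williams' ideal theorem \cite[Theorem 3.7]{Ionescu-Williams2012Remarks-on-ideal-stru-Fell-bundle-cst-alg}, applied to the \(G\)-invariant ideal \(I=\Contz(Y;\A|_Y)\) of \(\Contz(\base;\A|_{\base})\). It then identifies the Fell bundle associated to \(I\) with \(\A|_{G(Y)}\), and the quotient Fell bundle with \(\A|_{G(\base\setminus Y)}\).

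You instead prove this special case directly:
\begin{enumerate}
\item Extension by zero and restriction are \(I\)-norm contractive \(^*\)-homomorphisms, because both \(Y\) and \(\base\setminus Y\) are invariant.
\item Isometry of \(\iota\) and the inclusion \(\ker r\subseteq J\) both come from the Muhly--Williams disintegration theorem, by extending or restricting the quasi-invariant measure along the invariant sets.
\item Surjectivity of \(r\) comes from the Tietze extension.
\end{enumerate}

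The citation route is shorter and more general: it covers arbitrary invariant ideals of \(\Contz(\base;\A|_{\base})\) and groupoids with Haar systems. However, it leaves implicit the identification of the ideal and quotient Fell bundle algebras with the restricted-bundle algebras. For instance, compactly supported sections of the ideal bundle only need to vanish on \(G(\base\setminus Y)\); they need not have compact support inside \(G(Y)\), so an approximation argument is still required. Your route avoids that identification and uses only tools already in the paper, at the cost of some measure-theoretic care.

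Two points deserve to be written out. In Step~4, the nondegeneracy of \(L\) forces the fibre representations of \(\A_x\) on \(\Hilm_x\) to be nondegenerate for \(\mu\)-a.e.\ \(x\). Only then does \(L(\Cc(Y;\A|_Y))=0\) give \(\Hilm_x=0\) for a.e.\ \(x\in Y\); the bisection argument from the main theorem is not really what is needed. In Step~2, take \(\mu\) to be a probability measure, replacing it by an equivalent one if necessary. This keeps its extension by zero to \(\base\) a Radon measure.
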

 
 \begin{remark}
 	The short exact sequence in Proposition~\ref{prpo-exact-seq-Will} is a special
 	case of the ideal structure result of Ionescu--Williams~\cite[Theorem 3.7]{Ionescu-Williams2012Remarks-on-ideal-stru-Fell-bundle-cst-alg}.
 	Indeed, let \(Y\) be an open invariant subset of \(\base\) and
 	consider the \(G\)\nb-invariant ideal \(	I=\Contz(Y;\A|_Y)\)
 	of \(\Contz(\base;\A|_{\base})\). The Fell bundle associated to \(I\)
 	is naturally identified with the restriction \(\A|_{G(Y)}\), while the
 	quotient Fell bundle is naturally identified with
 	\(\A|_{G(\base \setminus Y)}\). Thus, the short exact sequence in
 	Proposition~\ref{prpo-exact-seq-Will} follows directly from the main result of~\cite{Ionescu-Williams2012Remarks-on-ideal-stru-Fell-bundle-cst-alg}.
 \end{remark}
 
 
 \noindent For the next result, we assume that \(c\) is a locally constant \(1\)\nb-cocycle, i.e., \(c\colon G\to \R\) is a continuous groupoid homomorphism when \(\R\) equipped with the discrete topology.
 \begin{theorem}
 Let \(p\colon \A\to G\) be a Fell bundle over a locally compact Hausdorff second countable \'etale groupoid \(G\) and let \(c\colon G\to \R\) be a locally constant \(1\)-cocycle with the associated boundary set \(Z\). Let \(\sigma^c\) be the associated dynamics given by Equation~\eqref{equ:real-dyna}. Then the crystal \(A_c\) of \((\Cst(G;\A), \sigma^c)\) is canonically isomorphic to \(\Cst(G(Z)); \A|_{G(Z)})\). In particular \(A_c\neq 0\) if and only if \(Z\neq \emptyset\).  
 \end{theorem}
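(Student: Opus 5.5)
The plan is to identify the fixed-point algebra, the spectral subspaces and the ideal \(I\) of Equation~\eqref{eq-ideal-crystal} concretely in terms of the clopen level sets of \(c\). Then Proposition~\ref{prpo-exact-seq-Will} will compute the quotient \(A_0/I\).

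\textbf{Step 1: spectral subspaces.} Write \(A=\Cst(G;\A)\) and \(H\defeq c^{-1}(0)\). Since \(c\) is locally constant, each level set \(c^{-1}(\lambda)\) is clopen in \(G\). So \(H\) is an open and closed subgroupoid, in particular \'etale, and \(\Cc(H;\A|_H)\) sits inside \(\Cc(G;\A)\) by extension by zero. A compactly supported section meets only finitely many level sets. Hence every \(f\in\Cc(G;\A)\) is a finite sum \(\sum_\lambda f\cdot 1_{c^{-1}(\lambda)}\), and by Equation~\eqref{equ:real-dyna} each summand lies in \(A_\lambda\). Thus \(\sigma^c\) is almost periodic.

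Using the conditional expectation \(E\) (which on \(\Cc(G;\A)\) is restriction to \(H\)) and the projections \(a\mapsto\int_{\widehat\Gamma}\overline{\chi(\lambda)}\sigma_\chi(a)\,\dd\chi\), I would show that \(A_\lambda\) is the closure of \(\Cc(c^{-1}(\lambda);\A)\). In particular \(A_0\) is the closure of \(\Cc(H;\A|_H)\). I would identify this closure with \(\Cst(H;\A|_H)\): the inclusion is isometric because \(E\) is a contractive conditional expectation onto it, and I plan to cite or adapt the standard open-subgroupoid argument here.

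\textbf{Step 2: the invariant open set.} Set \(Y\defeq\base\setminus Z\). By Lemma~\ref{lem-closed-of-boundary}, \(Y\) is open. It is \(H\)\nb-invariant: if \(\eta\in H\) and \(\gamma\in G^{s(\eta)}\), then \(c(\eta\gamma)=c(\gamma)\), so \(r(\eta)\in Z\) if and only if \(s(\eta)\in Z\). Moreover \(G(Z)\subseteq H\), since for \(\gamma\in G(Z)\) we get \(c(\gamma)\le 0\) from \(r(\gamma)\in Z\) and \(c(\gamma)\ge0\) from \(s(\gamma)\in Z\). Hence \(H(Z)=G(Z)\). Applying Proposition~\ref{prpo-exact-seq-Will} to \(H\) and \(Y\) then gives
\[
A_0/\Cst(H(Y);\A|_{H(Y)})\cong \Cst(G(Z);\A|_{G(Z)}),
\]
and the quotient map is implemented by \(f\mapsto f|_{G(Z)}\). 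It therefore remains to prove \(I=\Cst(H(Y);\A|_{H(Y)})\).

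\textbf{Step 3: \(I\subseteq \Cst(H(Y);\A|_{H(Y)})\).} Take \(\lambda>0\) and \(f,g\) supported in \(c^{-1}(\lambda)\). For \(\eta\in H\), every nonzero term of \(f*g^*(\eta)\) has the form \(f(\alpha)g(\beta)^*\) with \(\alpha\in G^{r(\eta)}\) and \(c(\alpha)=\lambda>0\). Hence \(r(\eta)\notin Z\), and \(f*g^*\) is supported in \(H(Y)\). Taking closures of spans gives the inclusion.

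\textbf{Step 4: the reverse inclusion (main obstacle).} Fix \(x\in Y\). There is \(\alpha\in G^x\) with \(c(\alpha)=\lambda>0\), and a bisection \(U\ni\alpha\) contained in \(c^{-1}(\lambda)\). For \(f,g\in\Cc(U;\A)\), the product \(f*g^*\) is the section \(r(\gamma)\mapsto f(\gamma)g(\gamma)^*\) over \(r(U)\). By saturation, \(\A_\gamma\A_\gamma^*\) is dense in \(\A_{r(\gamma)}\). A partition-of-unity and inductive-limit-topology argument, in the spirit of Lemma~\ref{lem-ind-dens}, should then show that \(I\) contains \(\Cc(r(U);\A)\). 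Covering \(Y\), this gives \(\Contz(Y;\A|_Y)\subseteq I\).

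Since \(I\) is an ideal of \(A_0\), it contains the ideal generated by \(\Contz(Y;\A|_Y)\), which is \(\Cst(H(Y);\A|_{H(Y)})\). I would check this last identification by approximating sections on bisections of \(H(Y)\) by products \(k*f\) with \(k\in\Contz(Y;\A|_Y)\), using an approximate identity as in Lemma~\ref{lem-ind-rep-form}(1).

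\textbf{Step 5: conclusion.} Combining Steps 2--4 gives \(A_c\cong\Cst(G(Z);\A|_{G(Z)})\), canonically via \(\theta(f)=E(f)+I\mapsto f|_{G(Z)}\). If \(Z=\emptyset\), the right-hand side is \(0\). If \(Z\neq\emptyset\), the algebra is nonzero, because fibres over units are nonzero for a saturated bundle with nonzero fibres; alternatively, this follows from Theorem~\ref{thm-main-ground-st} together with the ground-state correspondence of Laca--Neshveyev--Yamashita.
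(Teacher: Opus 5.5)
Your proposal is correct and has the same architecture as the paper's proof. Both identify \(A_0\) with \(\Cst(c^{-1}(0);\A|_{c^{-1}(0)})\), apply Proposition~\ref{prpo-exact-seq-Will} to \(c^{-1}(0)\) and the invariant open set \(\base\setminus Z\), and show that \(I\) is the kernel of restriction to \(G(Z)=c^{-1}(0)(Z)\).

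The real difference is in the reverse inclusion. The paper applies Lemma~A.4 of Muhly--Williams to \(J=\textup{span}\{s_1*s_2^*\}\) on all of \(c^{-1}(0)\setminus G(Z)\). It proves fibrewise density at every arrow \(\gamma\) through the factorization \(\gamma=\eta(\gamma^{-1}\eta)^{-1}\) with \(\eta,\gamma^{-1}\eta\in c^{-1}(\lambda)\), and it has to check a \(\Contz\)-module property on arrows via a bisection argument. You only need density at unit fibres over \(\base\setminus Z\). There the module property \(h\cdot(f*g^*)=((h\circ r)f)*g^*\) is immediate, and the fibres are handled by density of \(\textup{span}\,\A_\gamma\A_\gamma^*\) in \(\A_{r(\gamma)}\). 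You then get all of \(\Cc(H(Y);\A|_{H(Y)})\) for free, because \(I\) is an ideal of \(A_0\) and \(k_n*f\to f\) for an approximate identity \((k_n)\) of \(\Contz(Y;\A|_Y)\). This is slightly cleaner than the paper's route. Your Step~3 also proves the inclusion \(I\subseteq\ker\rho\) explicitly, which the paper leaves implicit.

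There are two small caveats.

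First, in Step~1, a contractive conditional expectation from \(\Cst(G;\A)\) onto the closure of \(\Cc(H;\A|_H)\) does not by itself show that the norm inherited from \(\Cst(G;\A)\) equals the universal norm of \(\Cst(H;\A|_H)\). The nontrivial inequality needs representations of \(H\) to be induced up to \(G\). So you genuinely need the inclusion theorem for clopen subgroupoids, which the paper cites as Theorem~3.23 of its inclusion reference.

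Second, the nonvanishing claim rests, in both your version and the paper's, on the unit fibres \(\A_x\) with \(x\in Z\) being nonzero. Your alternative via Theorem~\ref{thm-main-ground-st} and Laca--Neshveyev--Yamashita reduces to the same fact. Combined with Laca--Neshveyev--Yamashita, it only yields that \(A_c\neq0\) if and only if \(\Cst(G(Z);\A|_{G(Z)})\neq0\) (when \(Z\neq\emptyset\)).
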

\begin{proof}
Set \(K=c^{-1}(0)\).
Since \(c\) is locally constant, \(K\) is a clopen subgroupoid of \(G\). Theorem~3.23 of~\cite{Hossain-Inclusion-Fell-bund} 	ensures that the closure of \(\Cc(K; \A|_K)\) in \(\Cst(G;\A)\) can be identified with \(\Cst(K;\A|_{K})\). It follows from the definition that \(A_0\) is the closure of \(\Cc(K;\A|_{K})\) in \(\Cst(G;\A)\), thus \(A_0 =\Cst(K;\A|_{K})\). Since the boundary set \(Z\) is invariant under the action of the subgroupoid \(K\), we have \(G(Z)= K(Z)\) (by~\cite[Proposition I.3.16]{Renault1980A-gpd-appr-to-cst-alg}). 
Consider the \(^*\)-homomorphism \(\rho\colon \Cst(K;\A|_{K}) \to \Cst(G(Z); \A|_{G(Z)})\) by
\(\rho(f) = f|_{G(Z)}\) for \(f\in \Cc(K;\A)\). Since \(Z\) is invariant, applying Proposition~\ref{prpo-exact-seq-Will} to the open invariant set \(\base\setminus Z\), we obtain the short exact sequence
\[
0\longrightarrow
\Cst(K(K^{(0)}\setminus Z);\A|_{K(K^{(0)}\setminus Z)}) \longrightarrow \Cst(K;\A|_K) \xrightarrow{\rho}
\Cst(G(Z);\A|_{G(Z)}) \longrightarrow 0.
\]
Since \(\rho\) is surjective, we have 
\[
 \Cst(G(Z); \A|_{G(Z)}) \cong \frac{\Cst(K; \A|_{K})}{\textup{ker}(\rho)} \cong \frac{A_0}{\textup{ker}(\rho)}.
\]  
Thus, we need to prove that \(\ker(\rho) =I\), where \(I\) is given by Equation~\eqref{eq-ideal-crystal}. Let \(Y\defeq  K\setminus K(Z) =K\setminus G(Z)\). By the above short exact sequence we can see that the kernel \(\ker(\rho)\) contains \(\Cc(Y;\A|_{Y})\) as a dense subspace. To complete the proof we need to prove that the algebraic ideal 
\[
 J = \textup{span}\{s_1*s^*_2: s_1,s_2\in \Cc(c^{-1}(\lambda), \A|_{c^{-1}(\lambda)}) \textup{ for } \lambda >0\} 
\]
of \(\Cst(K;\A|_{K})\) is dense in \(\Contz(Y;\A|_Y)\). We use~\cite[Lemma A. 4]{Muhly-Williams2008Equivalence-and-disinte-thm-Fell-bundle} to prove this density part. Let \(g\in \Contz(Y)\). Since \(G\) is an \'etale groupoid, using a partition of unity argument we assume that \(g\) supported in a bisection \(U\). Thus, \(h=g\circ (r|_U)^{-1} \in \Contz(r(U))\). Now for \(s_1*s_2^*\in J\) supported in a bisection, we have 
\[
 g\cdot (s_1*s^*_2) (\gamma) = g(\gamma) (s_1*s^*_2)(\gamma) = h(r(\gamma)) (s_1*s^*_2) (\gamma) = (hs_1)*s^*_2(\gamma),
\]
where \(hs_1(\gamma)  =h(r(\gamma))s_1(\gamma)\) for \(\gamma \in U\) and \(h\in  \Contz(r(U))\). Therefore, \( g\cdot (s_1*s^*_2) \in J\). 

Now we establishes fibrewise density. Fix a point \(\gamma \in Y = c^{-1}(0)\setminus G(Z)\). From the definition of \(G(Z)\), it follows that \(r(\gamma) \notin Z\) and there exists \(\eta \in G^{r(\gamma)}\) with \(c(\eta) =\lambda >0\). 
Choose small bisections \(U\ni \gamma\) and \(V\ni\eta\). Then \(U^{-1}V\) is a bisection containing \(\gamma^{-1}\eta\) and \(c(U^{-1}V) = 0+\lambda = \lambda\). Let \(b\in \A_{\gamma}\). Since the Fell bundle \(\A\) is saturated,
\[
 \A_{\eta}\A^*_{\gamma^{-1}\eta} =\textup{span}\{ac^*: a\in \A_{\eta} \textup{ and } c\in \A_{\gamma^{-1}\eta} \}
\] 
 is dense in \(\A_{\gamma}\). Thus, there exists \(a_1,a_2,\cdots,a_n\in \A_{\gamma}\) and \(c_1,c_2,\cdots,c_n\in \A_{\gamma^{-1}\eta}\) such that 
\begin{equation}\label{eq-crys-dense}
\Big\|b- \sum_{i=1}^{n}a_ic_i^*\Big\|<\epsilon
\end{equation}
for any given \(\epsilon >0\). Again, the Fell bundle \(\A\) has enough section, choose continuous sections \(t_i\in \Cc(V;\A|_{V})\) and \(k_i\in \Cc(U^{-1}V;\A|_{U^{-1}V})\) such that \(t_i(\eta) = a_i\) and \(k_i(\gamma^{-1}\eta) = c_i\) for \(i=1,2,\cdots,n\). Thus, Equation~\eqref{eq-crys-dense} gives us
\[
 \Big\|b- \sum_{i=1}^{n}t_i*k_i^*(\gamma)\Big\| <\epsilon.
\]
Therefore, by~\cite[Lemma A. 4]{Muhly-Williams2008Equivalence-and-disinte-thm-Fell-bundle}, we have \(\overline{J} = \Contz(Y;\A|_{Y})\). Combining with \(I=\overline{J}\), we have \(I= \overline{\Cc(Y;\A|_{Y})} =\ker(\rho) \). Consequently,
\[
A_c =\frac{A_0}{I} =\frac{\Cst(K;\A|_K)}{\ker(\rho)}
\cong \Cst(G(Z);\A|_{G(Z)}).
\]
Finally, if \(Z\neq\emptyset\), then
\(\Cst(G(Z);\A|_{G(Z)})\neq0\), since the unit fibres
\(\A_x\) are nonzero for \(x\in Z\). Hence \(A_c\neq0\).
Conversely, if \(Z=\emptyset\), then \(G(Z)=\emptyset\) and
\(\Cst(G(Z);\A|_{G(Z)})=0\), so \(A_c=0\).
 \end{proof}

\section{Applications and examples}\label{sec-application}
In this section, we apply the main result, Theorem~\ref{thm-main-ground-st}, to characterize ground states on several classes of \(\Cst\)-algebras, including twisted \(\Cst\)-algebras of Deaconu--Renault groupoids, twisted higher-rank graph \(\Cst\)-algebras, and groupoid crossed products.

\subsection{Ground states on twisted groupoid \(\Cst\)-algebras}
Let \(G\) be a locally compact Hausdorff \'etale groupoid and let
\(\tau\in Z^2(G;\mathbb T)\) be a \(2\)-cocycle. Consider \(
\A=G\times \C\)
equipped with the product topology. Then \(\A\) is a Fell bundle over
\(G\), with bundle map \(
p\colon\A\to G\) given by \(p(\gamma,z) =\gamma\)
and operations are given by 
\[
(\gamma, z)(\eta, w) = (\gamma\eta, \tau(\gamma, \eta)zw) \quad \textup{and} \quad (\gamma, z)^* = (\gamma^{-1}, \overline{\tau(\gamma, \gamma^{-1})}\overline{z}).
\] 
This is called the \emph{Fell line bundle} associated to
\(\tau\). The Fell bundle \(\Cst\)\nb-algebra
\(\Cst(G;\A)\) is canonically isomorphic to the twisted groupoid
\(\Cst\)\nb-algebra \(\Cst(G;\tau)\) (see~\cite[Lemma 4.1]{Afsar-Sims2021KMS-state-on-Cst-alg-Fell-bundle-over-gpd}).

Let \(c\colon G\to\R\) be a \(1\)-cocycle, and let
\(\sigma^c\) be the associated real dynamics on \(\Cst(G;\tau)\). Let
\(Z\subseteq \base\) be the boundary set associated to \(c\). The
restriction of \(\tau\) to \(G(Z)^{(2)}\) is again a \(2\)-cocycle, which we
continue to denote by \(\tau\). Moreover, the restricted Fell bundle
\(\A|_{G(Z)}\) is the Fell line bundle associated to this
restricted cocycle. Consequently,
\[
\Cst(G(Z);\A|_{G(Z)})
\cong \Cst(G(Z);\tau).
\]
Thus, as a consequence of Theorem~\ref{thm-main-ground-st}, we obtain the
following.

\begin{corollary}\label{coro-ground-gpd-alg}
	Let \(G\) be a locally compact Hausdorff second countable \'etale groupoid and
	let \(\tau\in Z^2(G;\mathbb T)\) be a \(2\)-cocycle.  Let
	\(c\colon G\to\mathbb R\) be a \(1\)-cocycle  with the associated real dynamics \(\sigma^c\) given by \( \sigma^c_t(f)(\gamma)
	=
	e^{itc(\gamma)}f(\gamma)\) for
	\(f\in \Cc(G; \tau)\) and \(\gamma\in G\).
	If \(Z\neq \emptyset\) and \(G(Z)\) is \'etale, then there is an affine homeomorphism between the state space of \(\Cst(G(Z);\tau)\) and the
	\(\sigma^c\)-ground state space of \(\Cst(G;\tau)\).
	
	More precisely, this affine homeomorphism sends a state
	\(\varphi\) on \(\Cst(G(Z);\tau)\) to the unique
	\(\sigma^c\)-ground state \(\psi_\varphi\) on \(\Cst(G;\tau)\) satisfying
	\[
	\psi_\varphi(f)
	=
	\varphi(f|_{G(Z)}),
	\quad f\in \Cc(G; \tau).
	\]
	If \(Z=\emptyset\), then \(C^*(G;\tau)\) has no
	\(\sigma\)-ground states.
\end{corollary}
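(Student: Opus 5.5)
The plan is to deduce Corollary~\ref{coro-ground-gpd-alg} directly from Theorem~\ref{thm-main-ground-st}, applied to the Fell line bundle \(\A=G\times\C\) associated to \(\tau\). Three identifications are needed, and then the theorem's conclusion is transported along them.

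First, I recall the canonical isomorphism \(\Theta\colon\Cst(G;\A)\cong\Cst(G;\tau)\) from~\cite[Lemma 4.1]{Afsar-Sims2021KMS-state-on-Cst-alg-Fell-bundle-over-gpd}. On compactly supported sections it sends \(f(\gamma)=(\gamma,\tilde f(\gamma))\) to \(\tilde f\in\Cc(G;\tau)\). The multiplication and involution on \(\A\) are built from \(\tau\), so under \(\tilde f\leftrightarrow f\) the convolution and involution on \(\Cc(G;\A)\) become exactly the twisted ones on \(\Cc(G;\tau)\). I would also check that the Fell bundle is saturated: each fibre is \(\C\), and products of nonzero elements are nonzero, so this is clear. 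Since \(\Theta\) is the identity on scalar parts, it intertwines the two dynamics, because both are multiplication by \(\e^{itc(\gamma)}\) pointwise. Hence \(\Theta\) carries \(\sigma^c\)-ground states on \(\Cst(G;\A)\) bijectively, affinely and homeomorphically onto \(\sigma^c\)-ground states on \(\Cst(G;\tau)\).

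Second, I would observe that \(\A|_{G(Z)}\) is the Fell line bundle of the restricted cocycle \(\tau|_{G(Z)^{(2)}}\). This restricted cocycle is still continuous and still satisfies the cocycle identities, because \(G(Z)\) is a subgroupoid. The same lemma, applied to the étale groupoid \(G(Z)\), then gives \(\Cst(G(Z);\A|_{G(Z)})\cong\Cst(G(Z);\tau)\). The restriction maps \(f\mapsto f|_{G(Z)}\) on \(\Cc(G;\A)\) and \(\tilde f\mapsto\tilde f|_{G(Z)}\) on \(\Cc(G;\tau)\) correspond to each other under these identifications.

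With these in place, composing the affine homeomorphism of Theorem~\ref{thm-main-ground-st} with the two isomorphisms gives the stated correspondence together with the formula \(\psi_\varphi(f)=\varphi(f|_{G(Z)})\). Uniqueness transfers directly, and the case \(Z=\emptyset\) transfers verbatim.

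The main obstacle is the precise compatibility of the isomorphism with the dynamics and with restriction. That \(\Theta\) is isometric for the universal norms is already contained in the cited lemma, so little remains to be checked there. The only delicate point is that the bundle must be exactly the trivial line bundle with the stated operations, so that sections and scalar functions are genuinely identified with no extra sign or conjugation conventions. I would verify this once on \(\Cc\) and then extend by continuity.
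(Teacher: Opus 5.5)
Your proposal is correct and is essentially the paper's own argument: identify \(\Cst(G;\tau)\) and \(\Cst(G(Z);\tau)\) with the Fell bundle algebras of the Fell line bundle \(\A=G\times\C\) and of its restriction \(\A|_{G(Z)}\), then transport Theorem~\ref{thm-main-ground-st} along these identifications, including the case \(Z=\emptyset\). The paper leaves three checks implicit that you carry out: saturation, that the isomorphism intertwines the two copies of \(\sigma^c\), and that it commutes with restriction to \(G(Z)\); your ``delicate point'' about the involution is settled by \(\tau(\gamma,\gamma^{-1})=\tau(\gamma^{-1},\gamma)\), which follows from the cocycle identity for \((\gamma,\gamma^{-1},\gamma)\) together with normalization.
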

\begin{proof}
	Consider the Fell line bundle \(\A=G\times\C\) associated to
	the \(2\)-cocycle \(\tau\). Its restriction to \(G(Z)\) is the Fell line
	bundle associated to \(\tau|_{G(Z)^{(2)}}\), and we have
	\[
	\Cst(G(Z);\A|_{G(Z)})
	\cong \Cst(G(Z);\tau).
	\]
	Thus, the result follows immediately from
	Theorem~\ref{thm-main-ground-st}.
	Under this identification, the state \(\varphi\) on \(\Cst(G(Z); \tau)\) is extended to a \(\sigma^c\)-ground state
	\[
	\psi_\varphi(f)=\varphi(f|_{G(Z)})
	\]
	for \(f\in \Cc(G; \tau)\).
	If \(Z=\emptyset\), the final assertion follows from the corresponding
	statement in Theorem~\ref{thm-main-ground-st}.
\end{proof}

\subsection{Ground states on twisted \(\Cst\)-algebras of Deaconu--Renault groupoids}
Let \(X\) be a locally compact Hausdorff space and \(T\colon X\to X\) a local homeomorphism. Then
\[
G_T= \{(x,m-n,y) \in X\times \Z\times X: m,n\geq 0 \textup{ and } T^m(x) =T^n(y)\}
\]
has an \'etale groupoid structure given by
\begin{itemize}
	\item \((x,m-n,y)\) and \((z,p-q,w)\) are composable if and only if \(y=z\) and the composiation is given by  \((x,m-n,y)(y, p-q,w) = (x, m+p-(n+q), w)\);
	\item \((x,m-n,y)^{-1} = (y,n-m,x)\).
\end{itemize}
The range and source maps are given by \(r(x,m-n,y) = (x,0,x)\) and \(s(x,m-n,y) = (y,0,y)\). The unit space of \(G_T\) can be identified with \(X\). This groupoid is called \emph{Deaconu--Renault groupoid} (see~\cite{Deaconu-1995-Groupoid-associated-endomor}). Note that if \(X\) is second countable then so is \(G_T\). 
For a minimal Deaconu--Renault system, Proposition~4.2 of~\cite{Armstong-Sims-Simplicity-twist-Deaconu-Renault-gpd} produces a collection of continuous \(2\)-cocycle \(Z^2(G_T, \mathbb{T})\) from the bicharacter of \(P_T\) where 
\[
P_T= \{p\in \Z : (x, p, x) \in G_T \textup{ for all } x\in X\}
\] 
is a subgroup of \(\Z\).
 
Fix \(\omega \in Z^2(G_T, \mathbb{T})\). Consider the the canonical \emph{gauge cocycle} \(c\colon G_T\to \R\) by \(c(x,m-n,y) = m-n\). This cocycle induces a dynamics \(\sigma^c\) on \(\Cst(G_T; \omega)\) by 
\[
\sigma^c_t(f)(x,m-n,y) = \e^{it(m-n)} f(x,m-n,y)
\]
for \(f\in \Cc(G_T; \omega)\). The boundary set \(Z\) for \(c\) is 
\[
Z=\{y\in X :c(\gamma) \geq 0 \textup{ for } \gamma \in (G_T)_y\}.
\]
Let \(y\in X\) and \(n\in \N\). Consider an arrow \(\gamma = (T^n(y), -n, y) \in G_T\) and \(s(\gamma) = y\). Then \(c(\gamma) = -n <0\). Thus, \(y\notin Z\), consequently \(Z=\emptyset\).

\noindent Therefore, Corollary~\ref{coro-ground-gpd-alg} ensures that for the canonical gauge cocycle \(c\), the twisted \(\Cst\)\nb-algebra \(\Cst(G_T; \omega)\) has no \(\sigma^c\)\nb-ground states.

\subsubsection{A more interesting cocycle} 
To obtain a nonempty boundary set, we consider cocycle from a \emph{potential} \(F\). Let
\(F\colon X\to\R\) be a continuous function, and for \(n\in\N\) define
\[
F_0(x)=0 \quad \textup{and} \quad 
F_n(x)=\sum_{j=0}^{n-1}F(T^j(x)), \quad \textup{for } n\geq 1.
\]
Then \(
c_F(x,m-n,y)=F_m(x)-F_n(y) \)
is a continuous \(1\)-cocycle on \(G_T\), and hence determines a
one-parameter automorphism group \(\sigma^F\) on \(\Cst(G_T; \omega)\) by
\begin{equation}\label{eq-dyna-potential}
\sigma^F_t(f)(x,m-n,y)
=
e^{it(F_m(x)-F_n(y))}f(x,m-n,y),
\end{equation}
for \( f\in \Cc(G_T; \omega).\) 
Our goal is to understand the boundary set 
\[
Z_F= \{y\in X : c_F(\gamma) \geq 0 \textup{ for } \gamma \in (G_T)_y\}.
\]
Note that for \(y\in X\), an element of the source fibre \((G_T)_y\) is of the form \((x, m-n,y)\) with \(T^m(x) = T^n(y)\). Thus,
\begin{equation}\label{eq-DR-boun-1}
	y\in Z_F \iff F_n(y)\leq F_m(x) \textup{ whenever } T^n(y) = T_m(x).
\end{equation}
For \(z\in X\), consider the set \(H_z=\{(x,n) \in X\times \N : T^n(x) = z \}\). Define 
\begin{align*}
	M(z) &= \inf \{ F_n(x) : (n,x) \in H_z\}\\
	      &=\inf \{F_n(x) : n\in \N \textup{ and } T^n(x) = z\}.
\end{align*}
Note that \((z,0) \in H_z\) and \(F_0(z) =0\), thus \(M(z) \leq 0\).

Let \(y\in Z_F\) and fix \(n\in \N\). Set \(z= T^n(y)\). Then the pair \((y,n) \in H_z\). Since \(y\in Z_F\), Equation~\eqref{eq-DR-boun-1} gives us \(F_n(y)\leq F_m(x)\) for \(T^n(y) = T^m(x)\). Thus, we have 
\[
 F_n(y) \leq F_m(x) \quad \textup{for every pair } (x,m) \textup{ with } z= T^m(x).
\] 
Now taking infimum over all such \((x,m)\), we obtain \(	F_n(y)\leq M(z).\)
Again by definition we have \(M(z) \leq F_n(y)\), consequently we have 
\begin{equation}\label{eq-DR-boun-2}
	 F_n(y) = M(z) =M(T^n(y)) \quad \textup{ for } n\in \N.
\end{equation}

Conversely, suppose that Equation~\eqref{eq-DR-boun-2} holds for a fix \(y\in X\). Let \(\gamma_y = (x, m-n,y) \in (G_T)_y\). Then by definition of \(M(z)\), we have \(M(z)\leq F_m(x)\) and by the assumption we have \(F_n(y)\leq F_m(x)\). Therefore, Equation~\eqref{eq-DR-boun-1} gives us \(y\in Z_F\). Therefore, we have proved the following 
\begin{equation}\label{eq-DR-boun-3}
	y\in Z_F \iff F_n(y) = M(T^n(y)) \quad \textup{ for all } n\in \N.
\end{equation}
Hence, the boundary groupoid \(G_T(Z_F)\) can be identified with
\begin{equation}\label{eq-DR-boun-4}
	G_T(Z_F) = \{(x,m-n,y) : x,y \in Z_F, T^m(x) = T^n(y) \textup{ and } F_n(y) = F_m(x)\}.
\end{equation}
Thus, we have proved the following result.
\begin{corollary}\label{coro-ground-st-DR-gpd}
		Let \(X\) be a locally compact Hausdorff second countable space and
	let \(T\colon X\to X\) be a local homeomorphism. Let \(G_T\) be the associated Deaconu--Renault groupoid and \(\omega \in Z^2(G_T; \mathbb{T})\). Suppose \(F\colon G_T\to \R\) is a continuous function with the associated \(1\)\nb-cocycle \(c_F(x,m-n,y)=F_m(x)-F_n(y) \). Then the boundary set \(Z_F\) is characterize by Equation~\eqref{eq-DR-boun-3}. 

	 If \(Z_F\neq\emptyset\), there is an affine homeomorphism between the \(\sigma_F\)\nb-ground state space of \(\Cst(G_T; \omega)\) and state space of \(\Cst(G_T(Z_F); \omega)\), where \(G_T(Z_F)\) is the boundary groupoid given by Equation~\eqref{eq-DR-boun-4}.
\end{corollary}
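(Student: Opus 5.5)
The corollary is a specialisation of Corollary~\ref{coro-ground-gpd-alg}, hence ultimately of Theorem~\ref{thm-main-ground-st}, to the Deaconu--Renault groupoid \(G_T\) with the potential cocycle \(c_F\). The plan has three steps: make the standing hypotheses explicit, take the boundary set description from the discussion preceding the statement, and then check that \(G_T(Z_F)\) is \'etale so the earlier corollary applies.

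\emph{Hypotheses.} The potential is read as a continuous function \(F\colon X\to\R\). Then \(F_n\) is continuous, since it is a finite sum of the continuous functions \(F\circ T^j\). The map \((x,m-n,y)\mapsto F_m(x)-F_n(y)\) is well defined on \(G_T\), because \(F_{m+k}(x)-F_{n+k}(y)=F_m(x)-F_n(y)\) whenever \(T^m(x)=T^n(y)\). It is continuous for the usual topology of \(G_T\), being locally of the form \((x,y)\mapsto F_m(x)-F_n(y)\) on basic open sets with fixed \(m,n\). It is a homomorphism by a direct telescoping check. Also, \(G_T\) is second countable because \(X\) is.

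\emph{Boundary set.} The characterization~\eqref{eq-DR-boun-3} of \(Z_F\) was established in the paragraphs preceding the statement, via \eqref{eq-DR-boun-1} and \eqref{eq-DR-boun-2}. I would simply cite that argument. The same computation gives the description~\eqref{eq-DR-boun-4} of \(G_T(Z_F)\): an arrow with both endpoints in \(Z_F\) satisfies \(c_F\ge0\) and \(c_F\le0\), so \(c_F=0\) on it.

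\emph{Main obstacle: \'etaleness of \(G_T(Z_F)\).} Corollary~\ref{coro-ground-gpd-alg} requires \(G_T(Z_F)\) to be \'etale, and this is the step I expect to be the real difficulty. By Lemma~1.3 of~\cite{Laca-Larsen-Neshveyev-Ground-states}, quoted in Section~\ref{subsec-KMS-ground-st}, it suffices that the kernel \(c_F^{-1}(0)\) is \'etale. In general this kernel is only closed, not open, so it need not be \'etale in the relative topology. My first attempt would be a direct argument on \(G_T(Z_F)\) in the relative topology. On \(G_T(Z_F)\) the range map is injective on the traces of basic bisections, and it is continuous. What must be checked is that it is open onto open subsets of \(Z_F\). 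For this I would use that on a basic bisection \(U\) with fixed \(m,n\), the arrows in \(G_T(Z_F)\cap U\) are exactly those \((x,m-n,y)\in U\) with \(x,y\in Z_F\), since the equation \(F_n(y)=F_m(x)\) then holds automatically. Thus \(r\) carries \(U\cap G_T(Z_F)\) homeomorphically onto \(r(U)\cap Z_F\cap r\bigl(s^{-1}(Z_F)\cap U\bigr)\), and the question reduces to whether \(r(s^{-1}(Z_F)\cap U)\cap Z_F\) is relatively open in \(Z_F\). If this cannot be verified in general, I would add the \'etaleness of \(G_T(Z_F)\) (or of \(c_F^{-1}(0)\)) as a standing hypothesis, as in Theorem~\ref{thm-main-ground-st}, noting that it holds for instance when \(F\) is locally constant, since then \(c_F\) is locally constant and \(c_F^{-1}(0)\) is clopen.

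\emph{Conclusion.} With the hypotheses in place, apply Corollary~\ref{coro-ground-gpd-alg} with \(G=G_T\), \(\tau=\omega\) and \(c=c_F\). This gives an affine homeomorphism \(\varphi\mapsto\psi_\varphi\), with \(\psi_\varphi(f)=\varphi(f|_{G_T(Z_F)})\), between the state space of \(\Cst(G_T(Z_F);\omega)\) and the \(\sigma^F\)-ground state space of \(\Cst(G_T;\omega)\). Here \(\omega\) denotes the restricted \(2\)-cocycle.
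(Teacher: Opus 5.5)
Your route is the paper's. Its proof of this corollary is the derivation of \eqref{eq-DR-boun-1}--\eqref{eq-DR-boun-4} given just before the statement, followed by a tacit application of Corollary~\ref{coro-ground-gpd-alg}. Your checks that \(c_F\) is a well-defined continuous cocycle (with \(F\colon X\to\R\), as is clearly intended) only add detail. What needs fixing is the hedge in your third step. The paper never checks that \(G_T(Z_F)\) is \'etale, and this cannot be proved in general. So the hypothesis you propose to add is necessary: without it the corollary as printed is not even well posed.

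Here is a concrete failure. Take \(X=\{0,1\}^{\N}\) with \(T\) the one-sided shift. Let \(Y\) be the closed set of sequences with at most one coordinate equal to \(1\), and let \(F(x)=\sum_{0\le i<j}2^{-j}x_ix_j\), which is continuous, nonnegative and vanishes exactly on \(Y\).

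\begin{enumerate}
\item Since \(F\ge 0\), we have \(M\equiv 0\). Then \eqref{eq-DR-boun-3} gives \(Z_F=\{y: T^jy\in Y\text{ for all } j\ge 0\}=Y\neq\emptyset\), using \(T(Y)\subseteq Y\).
\item Write \(\mathbf{0}\) for the zero sequence and \(\chi_k\) for the sequence whose only \(1\) is in coordinate \(k\). The set \(V=\{(Tx,-1,x): x_0=1\}\) is an open bisection of \(G_T\).
\item Since \(\chi_0\) is the only point of \(Y\) with \(x_0=1\), we get \(V\cap G_T(Z_F)=\{(\mathbf{0},-1,\chi_0)\}\). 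So this arrow is isolated in \(G_T(Z_F)\), while its range \(\mathbf{0}=\lim_k\chi_k\) is not isolated in \(Z_F\).
\end{enumerate}

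In your notation, \(r(s^{-1}(Z_F)\cap V)\cap Z_F=\{\mathbf{0}\}\) is not relatively open, so \(G_T(Z_F)\) is not \'etale. Concretely, the indicator function of that arrow lies in \(\Cc(G_T(Z_F))\), but summing it over range fibres gives the indicator of \(\{\mathbf{0}\}\), which is discontinuous. Hence counting measures do not form a Haar system, and \(\Cst(G_T(Z_F);\omega)\) is not defined in the paper's sense.

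The corollary should therefore assume that \(G_T(Z_F)\) (or \(c_F^{-1}(0)\)) is \'etale. As you note, this holds for instance when \(F\) is locally constant. With that hypothesis your argument is complete.
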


\begin{example}[Strictly positive potential]
Let \(X\) be a locally compact Hausdorff second countable space and \(F\colon X\to \R\) a continuous map with \(F(x)>0\) for all \(x\in X\). For \(y\in X\) and \(n\in \N\), set \(\gamma_y(T^n(y),-n,y) \in (G_T)_y\). Then \(c_F(\gamma_y) = -F_n(y) <0\) for all \(n\in \N\). Thus, \(y\notin Z_F\) and hence \(Z_F=\emptyset\). Therefore, there does not exist any \(\sigma^F\)\nb-ground states on \(\Cst(G_T; \omega)\). 
\end{example}

\subsection{Ground states on twisted \(\Cst\)-algebras of higher-rank graphs}
Higher-rank graphs (or, \(k\)-graphs) were introduced by Kumjian and Pask
in~\cite{Kumjian-Pask2000Higher-Rank-graph-cst-alg} as a
higher-dimensional generalization of Cuntz--Krieger algebras. A
\emph{\(k\)-graph} is a countable category \(\Lambda\) together with a
functor \(
d\colon \Lambda\to\N^k \)
satisfying the factorization property: for every
\(\lambda\in\Lambda\) with \(d(\lambda)=m+n\), there exist unique
\(\mu,\gamma\in\Lambda\) such that \(
s(\mu)=r(\gamma), d(\mu)=m, d(\gamma)=n\) and 
\(\lambda=\mu\gamma.\)
The vertex set of the \(k\)-graph \(\Lambda\), denoted by \(\Lambda^0\), can be
identified with \(d^{-1}(0)\). For \(p\in \N^k\), define \(\Lambda^p \defeq \{\lambda \in \Lambda : d(\lambda) = p\}\) and \(\Lambda\) is said to be finite if \(\Lambda^p\) is finite for all \(p\in \N^k\). Given vertices \(u, v\in \Lambda^0\), we write \(v\Lambda u\defeq \{\lambda \in \Lambda : r(\lambda) = v  \textup{ and } s(\lambda) = u\}\). The \(k\)\nb-graph \(\Lambda\) is called \emph{strongly connected} if \(v\Lambda u \neq \emptyset\) for all vertices \(u,v\in \Lambda^0\). A \(k\)\nb-graphs \(\Lambda\) has \emph{no sources} if \(v\Lambda^p \neq \emptyset\) for all \(p\in \N^k\) and \(\Lambda\) is called \emph{row-finite} if \(v\Lambda^p\) is finite for all \(p\in \N^k\). 

Let \(
\Lambda^{(2)}
=
\{(\lambda,\mu)\in\Lambda\times\Lambda:s(\lambda)=r(\mu)\}.\)
A map \(\omega\colon\Lambda^{(2)}\to\mathbb{T}\) is called a
\(\mathbb T\)-valued \(2\)-cocycle on \(\Lambda\) if
\[
\omega(r(\lambda),\lambda)
=\omega(\lambda,s(\lambda)) =1 \quad \textup{and} \quad 
\omega(\lambda,\mu)\omega(\lambda\mu,\nu) =
\omega(\lambda,\mu\nu)\omega(\mu,\nu)
\]
for all composable triples \((\lambda,\mu,\nu)\). We denote the set of
all such \(2\)-cocycles by \(Z^2(\Lambda,\mathbb{T})\). For a row-finite
\(k\)-graph \(\Lambda\) and \(\omega\in Z^2(\Lambda,\mathbb{T})\), the
associated \emph{twisted \(k\)-graph \(\Cst\)-algebra} is denoted by
\(\Cst(\Lambda,\omega)\); we refer the reader to~\cite[Section~5]{Kumjian-Pask-Sims2015Twisted-Higher-rank-graph-alg}
for its definition.

Let \(
\Omega_k = \{(m,n)\in\N^k\times\mathbb N^k:m\leq n\}\).
Then \(\Omega_k\) is a \(k\)\nb-graph with structure maps \(r(p,q) = (p,p)\), \(s(p,q) = (q,q)\), \((p,q)(m,n) = (p,n)\) and \(d(p,q) = p-q\).
The vertex set \(\Omega_k^0\) can be identified with \(\N^k\).
The \emph{infinite-path space} of a \(k\)-graph \(\Lambda\) is
\[
\Lambda^\infty
= \{x\colon\Omega_k\to\Lambda:
x\textup{ is a degree maps preserving functor}\}.
\]
For \(l\in\mathbb N^k\), the shift map \( 
\kappa^l\colon\Lambda^\infty\to\Lambda^\infty \)
is defined by \(\kappa^l(x)(p,q)  = x(p+l, q+l)\).

Suppose that \(\Lambda\) is row-finite \(k\)-graph and has no sources. The
\emph{infinite-path groupoid} of \(\Lambda\) is
\[
G_{\Lambda}= \big\{(x,l,y) \in  \Lambda^\infty\times\Z^k\times\Lambda^\infty:  l= p-q \textup{ for some } p,q\in \N^{k} \textup{ and } \kappa^p(x) =\kappa^q(y)\big\}
\]
The groupoid operations are given by
\[
r(x,l,y)=x,~s(x,l,y)=y,~ 
(x,l,y)(y,l',z)
=(x,l+l',z), \textup{ and }
(x,l,y)^{-1}=(y,-l,x).
\]
Thus, the unit space \(G_\Lambda^{(0)}\) of the path groupoid  can be identified with \(\Lambda^\infty\).

For \(\lambda,\mu\in\Lambda\) satisfying
\(s(\lambda)=s(\mu)\), define
\[
Z(\lambda,\mu)=
\left\{
(\lambda x,d(\lambda)-d(\mu),\mu x) \in G_{\Lambda}:
x\in\Lambda^\infty,\ r(x)=s(\lambda)
\right\}.
\]
The collection \(\{Z(\lambda,\mu): \lambda, \mu \in \Lambda \textup{ and } s(\lambda)=s(\mu)\}\)
forms a basis for a topology on \(G_{\Lambda}\).
Moreover, \(G_\Lambda\) is an \'etale groupoid; see
\cite[Proposition~2.8]{Kumjian-Pask2000Higher-Rank-graph-cst-alg}.

Let \(
\Lambda{_s *_s} \Lambda := \{(\lambda,\mu) \in \Lambda \times \Lambda : s(\lambda)=s(\mu)\}\) and let \(\mathcal{P} \subseteq \Lambda {_s *_s} \Lambda\) be such that \((\lambda,s(\lambda)) \in \mathcal{P}\) for every \(\lambda \in \Lambda\) and
\[
G_{\Lambda} = \bigsqcup_{(\lambda,\mu)\in \mathcal{P}} Z(\lambda,\mu).
\]
We can always choose such \(\mathcal{P}\) by \cite[Lemma~6.6]{Kumjian-Pask-Sims2015Twisted-Higher-rank-graph-alg}. Fix \(\alpha \in G_{\Lambda}\), let \((\lambda_\alpha,\mu_\alpha)\) denote the unique element of \(\mathcal{P}\) with \(\alpha \in Z(\lambda_\alpha,\mu_\alpha)\), and define \(f\colon G_{\Lambda} \to \mathbb{Z}^k\) by \(f(x,n,y)=n\). 
Let \(\omega\in Z^2(\Lambda,\mathbb T)\) be a \(2\)\nb-cocycle. By
\cite[Lemma~6.3]{Kumjian-Pask-Sims2015Twisted-Higher-rank-graph-alg},
for every composable pair
\((\alpha,\beta)\in G_\Lambda^{(2)}\), there exist
\(\nu,\xi,\zeta\in\Lambda\) and \(y\in\Lambda^\infty\) such that
\[
\mu_\alpha\nu = \lambda_\beta\xi, \quad
\lambda_\alpha\nu = \lambda_{\alpha\beta}\zeta, \quad
\mu_\beta\xi = \mu_{\alpha\beta}\zeta,
\]
and
\begin{align*}
	\alpha &= (\lambda_\alpha\nu y,f(\alpha),\mu_\alpha\nu y),\\
	\beta  &= (\lambda_\beta\xi y,f(\beta),\mu_\beta\xi y),\\
	\alpha\beta &= (\lambda_{\alpha\beta}\zeta y, f(\alpha\beta),
	\mu_{\alpha\beta}\zeta y).
\end{align*}
Using these data, we define \(\tau_\omega\colon G_\Lambda^{(2)}\to\mathbb{T}\)
by
\begin{equation}\label{eq-2-cocycle-gpd}
\tau_\omega(\alpha,\beta)
		=
		\omega(\lambda_\alpha,\nu)
		\overline{\omega(\mu_\alpha,\nu)}
		\omega(\lambda_\beta,\xi)
		\overline{\omega(\mu_\beta,\xi)}
	\overline{\omega(\lambda_{\alpha\beta},\zeta)}
		\omega(\mu_{\alpha\beta},\zeta).
\end{equation}
By~\cite[Lemma~6.3]{Kumjian-Pask-Sims2015Twisted-Higher-rank-graph-alg}, \(\tau_\omega\) defines a continuous \(2\)-cocycle on \(G_\Lambda\), which independent of
the choices of \(\nu,\xi,\zeta\). Moreover,
\cite[Theorem~6.5]{Kumjian-Pask-Sims2015Twisted-Higher-rank-graph-alg}
shows that the cocycles obtained from different choices of
\(\mathcal{P}\) are cohomologous. Finally, by
\cite[Corollary~7.8]{Kumjian-Pask-Sims2015Twisted-Higher-rank-graph-alg},
there exists an isomorphism
\[
\Cst(\Lambda,\omega) \cong
\Cst(G_\Lambda,\tau_\omega).
\]
For a \(k\)-graph \(\Lambda\) and \(1\leq i\leq k \), let \(A_i\in \Mat_{\Lambda^0}\) be the matrix with entries \(A_i(v,u) = |v\Lambda^{e_i}u|\). Let \(\rho(A_i)\) be the spectral radius of \(A_i\). Define a map \(c\colon G_{\Lambda} \to \R\) by 
\begin{equation}\label{eq-cocycle-graph}
c(x,n,y) = \sum_{i=1}^{k} n_i\ln(\rho(A_i)).
\end{equation}
The map \(c\) is locally constant and thus \(c\) is a continuous \(1\)\nb-cocyle. For \(\tau\in Z^2(G_\Lambda, \mathbb{T})\), the cocycle \(c\) induces a dynamics \(\sigma^c\) on \(\Cst(G_{\Lambda}; \tau)\) given by 
\[
 \sigma^c_t(f)(x,n,y) = \e^{itc(x,n,y)} f(x,n,y)
\] 
for \(f\in \Cc(G_{\Lambda}; \tau)\). The dynamics is referred to as the \emph{preferred dynamics} in~\cite{an-Huef-Laca-Raeburn-Sims-KMS-states-k-graph-periodicity}.

\begin{lemma}\label{lem-char-of-bound-set-k-graph}
	Let \(\Lambda\) be a strongly connected row-finite \(k\)-graph with no sources, and let \(c\) be the \(1\)-cocycle on \(G_{\Lambda}\) given by Equation~\eqref{eq-cocycle-graph}. Then the boundary set \(Z=\Lambda^{\infty}\) if and only if \(\rho(A_i) = 1\) for all \(i=1,2,\cdots,k\).
\end{lemma}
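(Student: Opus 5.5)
The plan is to prove the two directions separately, using the description \(Z=\{y\in\Lambda^\infty : c(\gamma)\geq 0 \textup{ for all } \gamma\in (G_\Lambda)_y\}\) from Equation~\eqref{eq-boundary-set}. Throughout, write \(\theta_i\defeq \ln(\rho(A_i))\), so that \(c(x,n,y)=\sum_i n_i\theta_i\). The one place the hypotheses on \(\Lambda\) enter is through \(\rho(A_i)\) itself: strong connectivity and the absence of sources make each \(A_i\) a nonzero matrix with every row nonzero, so \(\rho(A_i)>0\) and \(\theta_i\) is a well-defined real number. (If \(\Lambda^0\) is infinite, \(\rho(A_i)\) should be read as whatever spectral radius the paper uses; all that is needed is \(0<\rho(A_i)<\infty\).)

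\emph{The direction \(\Leftarrow\).} Suppose \(\rho(A_i)=1\) for every \(i\). Then every \(\theta_i\) is zero, so \(c\equiv 0\) on \(G_\Lambda\). The condition \(c\geq 0\) on \((G_\Lambda)_y\) is therefore trivially satisfied for every \(y\), and hence \(Z=\Lambda^\infty\).

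\emph{The direction \(\Rightarrow\).} Suppose \(Z=\Lambda^\infty\), fix \(i\), and pick any \(y\in\Lambda^\infty\). I will produce arrows in \((G_\Lambda)_y\) whose degree is \(\pm e_i\).
\begin{enumerate}
\item Take \(x=\kappa^{e_i}(y)\). Then \(\kappa^{0}(x)=\kappa^{e_i}(y)\), so \((x,-e_i,y)\in G_\Lambda\) with source \(y\). Its cocycle value is \(c=-\theta_i\), and \(y\in Z\) forces \(-\theta_i\geq 0\).
\item Since \(\Lambda\) has no sources, \(r(y)\Lambda^{e_i}\neq\emptyset\). Choose an edge \(\lambda\) there and set \(x=\lambda y\). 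Then \(\kappa^{e_i}(x)=y\), so \((x,e_i,y)\in G_\Lambda\) has source \(y\) and cocycle value \(\theta_i\). This forces \(\theta_i\geq 0\).
\end{enumerate}
Together these give \(\theta_i=0\), that is, \(\rho(A_i)=1\).

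The only potential obstacle is checking the path-groupoid conventions: with \(l=p-q\) and \(\kappa^p(x)=\kappa^q(y)\), the arrows \((\kappa^{e_i}y,-e_i,y)\) and \((\lambda y,e_i,y)\) must genuinely lie in \(G_\Lambda\). Both follow immediately from the definition of \(G_\Lambda\) and from the formula \(\kappa^{e_i}(\lambda y)=y\) for \(d(\lambda)=e_i\). Note also that only one point \(y\in Z\) was used in the direction \(\Rightarrow\), so in fact \(Z\neq\emptyset\) already forces \(\rho(A_i)=1\) for all \(i\).
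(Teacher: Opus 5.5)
Your proof is correct and is essentially the paper's argument: for \(\Leftarrow\) you use \(c\equiv 0\), and for \(\Rightarrow\) you use the two arrows \((\kappa^{e_i}(y),-e_i,y)\) and \((\lambda y,e_i,y)\) in \((G_\Lambda)_y\) to get \(\ln(\rho(A_i))\le 0\) and \(\ln(\rho(A_i))\ge 0\). The only differences are that the paper first works with a general \(p\in\N^k\) before taking \(p=e_i\), whereas you go straight to \(e_i\), and that your side remark (a single point of \(Z\) already forces \(\rho(A_i)=1\)) is exactly the argument the paper uses later in the proof of Theorem~\ref{thm-ground-k-graph}(2).
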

\begin{proof}
	Suppose that \(\rho(A_i) = 1\) for all \(i=1,2,\cdots,k\).
	For \(g=(y,m-n,x)\in G_\Lambda\), Equation~\eqref{eq-cocycle-graph}
	gives
	\[
	c(g)=\sum_{i=1}^k(m_i-n_i)\ln(\rho(A_i))=0.
	\]
	Thus, \(c(g)\geq0\) for every \(g\in G_\Lambda\). Hence, every unit
	\(x\in Z\). Therefore, \(Z=\Lambda^{\infty}\).
	
	Conversely, suppose that \(Z=\Lambda^{\infty}\).
	Let \(x\in\Lambda^\infty\) and \(p\in\N^k\). Then \(g_p=(\kappa^p(x),-p,x)\in G_\Lambda\) and 
	since \(s(g_p)=x\in Z\), we have
	\[
   c(g_p) = -\sum_{i=1}^k p_i\ln(\rho(A_i))\geq0.
	\]
	Thus, we have
	\begin{equation}\label{eq-boundary-inequality-1}
		\sum_{i=1}^k p_i\ln(\rho(A_i))\leq0.
	\end{equation}
	On the other hand, since \(\Lambda\) has no sources, for every \(p\in\N^k\) we may choose
	\(\lambda\in\Lambda^p\) such that \(s(\lambda)=r(x)\). Then
	\(\lambda x\in\Lambda^\infty\) and \(\kappa^p(\lambda x)=x.\)
	Consequently, \(g^p=(\lambda x,p,x)\in G_\Lambda.\)
	Again \(s(g^p)=x\in Z\), so \(c(g^p)\geq0\). Thus, we have
	\begin{equation}\label{eq-boundary-inequality-2}
		\sum_{i=1}^k p_i\ln(\rho(A_i))\geq0.
	\end{equation}
	Combining Equations~\eqref{eq-boundary-inequality-1} and~\eqref{eq-boundary-inequality-2}, we obtain
	\[
	\sum_{i=1}^k p_i\ln(\rho(A_i)) = 0
	\]
	for all \(p\in \N^k\).
	Taking \(p=e_i\), the \(i\)-th standard basis vector of
	\(\N^k\), gives \(\ln(\rho(A_i)) =0 \) and hence \(\rho(A_i) =1\) for all \(i=1,2,\cdots,k\).
\end{proof}

\noindent The next theorem gives a complete characterization of the ground state for twisted \(k\)-graph algebras for preferred dynamics. 

\begin{theorem}\label{thm-ground-k-graph}
	Let \(\Lambda\) be a strongly connected row-finite \(k\)-graph with no source and \(\omega\in Z^2(\Lambda, \mathbb{T})\). Let \(c\) be a \(1\)\nb-cocycle on \(G_{\Lambda}\) given by Equation~\eqref{eq-cocycle-graph} and let \(\sigma^c\) be the preferred dynamics on \(\Cst(G_{\Lambda}; \tau_\omega)\).
	Then the following holds.
	\begin{enumerate}
		\item  If \(\rho(A_i) = 1\) for \(i=1,2,\cdots,k\), then any state on \(\Cst(\Lambda; \omega)\) is a \(\sigma^c\)\nb-ground state on \(\Cst(\Lambda; \omega)\).
		\item If \(\rho(A_i)\neq 1\) for at least one \(i\), then \(\Cst(\Lambda; \omega)\) has no \(\sigma^c\)\nb-ground states.
	\end{enumerate}
\end{theorem}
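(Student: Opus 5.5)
The plan is to reduce everything to Corollary~\ref{coro-ground-gpd-alg}, using the isomorphism \(\Cst(\Lambda,\omega)\cong \Cst(G_\Lambda,\tau_\omega)\) of~\cite[Corollary~7.8]{Kumjian-Pask-Sims2015Twisted-Higher-rank-graph-alg}. Under this isomorphism the preferred dynamics on \(\Cst(\Lambda,\omega)\) corresponds to \(\sigma^c\) on \(\Cst(G_\Lambda;\tau_\omega)\), so it is enough to work on the groupoid side. The cocycle \(c\) in Equation~\eqref{eq-cocycle-graph} is locally constant, so \(c^{-1}(0)\) is a clopen subgroupoid of the \'etale groupoid \(G_\Lambda\) and is therefore \'etale. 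By~\cite[Lemma~1.3]{Laca-Larsen-Neshveyev-Ground-states}, \(G_\Lambda(Z)\) is then \'etale, so the hypotheses of Corollary~\ref{coro-ground-gpd-alg} hold whenever \(Z\neq\emptyset\).

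For part (1), assume \(\rho(A_i)=1\) for all \(i\). Then \(c\equiv 0\), and Lemma~\ref{lem-char-of-bound-set-k-graph} gives \(Z=\Lambda^\infty\). Hence \(G_\Lambda(Z)=G_\Lambda\) and the restriction map \(f\mapsto f|_{G(Z)}\) is the identity. Corollary~\ref{coro-ground-gpd-alg} then says that every state \(\varphi\) of \(\Cst(G_\Lambda;\tau_\omega)\) gives a ground state \(\psi_\varphi=\varphi\), so every state is a ground state. One can also see this directly: \(\sigma^c\) is trivial, so \(z\mapsto\varphi(a\sigma_z(b))=\varphi(ab)\) is constant and hence bounded.

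For part (2), assume some \(\rho(A_i)\neq1\). We claim \(Z=\emptyset\), after which the last assertion of Corollary~\ref{coro-ground-gpd-alg} (or of Theorem~\ref{thm-main-ground-st}) finishes the proof. Lemma~\ref{lem-char-of-bound-set-k-graph} only gives \(Z\neq\Lambda^\infty\), so I would repeat its argument at a single unit. Take any \(x\in\Lambda^\infty\). Then \(g_{e_i}=(\kappa^{e_i}(x),-e_i,x)\) lies in \((G_\Lambda)_x\) and has \(c(g_{e_i})=-\ln\rho(A_i)\). Since \(\Lambda\) has no sources, we can pick \(\lambda\in\Lambda^{e_i}\) with \(s(\lambda)=r(x)\), and then \(g^{e_i}=(\lambda x,e_i,x)\) lies in \((G_\Lambda)_x\) with \(c(g^{e_i})=\ln\rho(A_i)\). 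As \(\ln\rho(A_i)\neq0\), one of these two arrows has negative cocycle value, so \(x\notin Z\). Since \(x\) was arbitrary, \(Z=\emptyset\).

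The main point needing care is this last step: Lemma~\ref{lem-char-of-bound-set-k-graph} as stated does not by itself give \(Z=\emptyset\), so the pointwise argument above is needed. A smaller check is that the isomorphism with \(\Cst(G_\Lambda,\tau_\omega)\) is equivariant, which holds because it carries the generators \(s_\lambda\) to \(1_{Z(\lambda,s(\lambda))}\), and \(c\) is constant equal to \(\sum_i d(\lambda)_i\ln\rho(A_i)\) on these sets.
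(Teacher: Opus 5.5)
Your proof is correct and follows the paper's route. You transfer to \(\Cst(G_\Lambda;\tau_\omega)\) and use Lemma~\ref{lem-char-of-bound-set-k-graph} to get \(G_\Lambda(Z)=G_\Lambda\) in part (1); in part (2) you show \(Z=\emptyset\) pointwise via the arrows \((\kappa^{e_j}(x),-e_j,x)\) and \((\lambda x,e_j,x)\), and both parts then invoke Corollary~\ref{coro-ground-gpd-alg}. The differences are cosmetic: you merge the paper's two cases \(\rho(A_j)>1\) and \(\rho(A_j)<1\) into one argument, and you add harmless extra checks (that \(G_\Lambda(Z)\) is \'etale, that the isomorphism is equivariant, and that \(\sigma^c\) is trivial in part (1)).
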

\begin{proof}
	Since \(\Cst(\Lambda;\omega) \cong \Cst(G_\Lambda;\tau_\omega)\),
we can apply Corollary~\ref{coro-ground-gpd-alg} to describe the ground states.

\noindent (1). Suppose that \(\rho(A_i)=1\) for every \(i =1,2,\cdots,k\). Then by Lemma~\ref{lem-char-of-bound-set-k-graph}
\(Z=\Lambda^\infty\), and hence the boundary groupoid \(
G_\Lambda(Z)=G_\Lambda\).
Thus, Corollary~\ref{coro-ground-gpd-alg} identifies the
\(\sigma^c\)-ground state space with the state space of
\(\Cst(G_\Lambda;\tau_\omega)\). Consequently, every state on
\(\Cst(\Lambda;\omega)\) is a \(\sigma^c\)-ground state.

\noindent (2). Suppose \(\rho(A_j) \neq 1\) for some \(j\). We complete the proof in two separate cases.

\textbf{Case I:} Assume \(\rho(A_j)>1\). 
Let \(x\in \Lambda^{\infty}\). The shift \(\kappa^{e_j}(x)\) is well-defined as the \(k\)\nb-graph has no source. And we have \(g=(\kappa^{e_j}(x), -e_j, x)\in G_{\Lambda}\) and \(s(g) =x\), where \(e_j\) is standard basis vector for \(\N^k\). Then \(c(g) = -\ln(\rho(A_j)) <0\). Thus, \(x\notin Z\). Since \(x\in\Lambda^\infty\) was arbitrary, \(Z=\emptyset\).
Therefore, the boundary groupoid \(G_{\Lambda}(Z) = \emptyset\). Hence, by Corollary~\ref{coro-ground-gpd-alg}, \(\Cst(\Lambda; \omega)\) has no \(\sigma^c\)-ground states.

\textbf{Case II:} Assume \(\rho(A_j)<1\). Let \(x\in \Lambda^{\infty}\). Since \(\Lambda\) has no source, choose $\lambda \in \Lambda^{e_j}$ with \(s(\lambda) =r(x)\). Then \(g_x=(\lambda x, e_j, x) \in G_{\Lambda}\) as \(\kappa^{e_j}(\lambda x) =x\). Now \(s(g_x) =x\) and \(c(g_x) = \ln(\rho(A_j)) <0\). Therefore, \(x\notin Z\). As, \(x\in \Lambda^{\infty}\) was arbitrary, \(Z=\emptyset\). Then by Corollary~\ref{coro-ground-gpd-alg}, \(\Cst(\Lambda; \omega)\) has no \(\sigma^c\)-ground states.
\end{proof}

\subsection{Ground states on groupoid crossed products}
In this subsection, we will apply Theorem~\ref{thm-main-ground-st} to the Fell bundle associated with a groupoid crossed product.
\begin{definition}[{\cite[Defintion 3.46]{Goehle2009Phd-Thesis-Gpd-cros-Prod}}]
A groupoid dynamical system is a triple \((A,G, \alpha)\), where \(A\) is a \(\Contz(\base)\)\nb-algebra and \(\alpha = \{\alpha_{\gamma}\}_{\gamma\in G}\) is an action of \(G\) on \(A\) satisfying
\begin{enumerate}
	\item for each \(\gamma \in G\), \(\alpha_{\gamma}\colon A(s(\gamma)) \to A(r(\gamma))\) is a \(^*\)-isomorphism;
	\item \(\alpha_{\gamma\eta} = \alpha_{\gamma}\circ\alpha_{\eta}\);
	\item  the map from \(G\times_{s, \base,p}\A\to \A\) given by \((\gamma, a) \mapsto \alpha_{\gamma}(a)\) is continuous, where \(p\colon\A\to \base\) is the upper semicontinuous bundle of \(\Cst\)\nb-algebras associated to the \(\Contz(\base)\)\nb-algebra \(A\).  
\end{enumerate}
\end{definition}
We fix a groupoid dynamical system \((A,G, \alpha)\), where \(G\) is an \'etale groupoid. Let \(q\colon r^*\A \to G\) be the \emph{pull back} bundle along with the range map, i.e.,
\[
r^*\A = \{(\gamma,a) \in G\times \A : r(\gamma) = p(a)\}
\] 
and \(q(\gamma,a) = \gamma\). Then~\cite[Example 2.1]{Muhly-Williams2008Equivalence-and-disinte-thm-Fell-bundle} ensures that \(r^*\A\) is a Fell bundle over \(G\) and the groupoid crossed product \(A\rtimes_{\alpha}G\) is isomorphic to the Fell bundle \(\Cst\)\nb-algebra \(\Cst(G;r^*\A)\) (by~\cite[Example 2.8]{Muhly-Williams2008Equivalence-and-disinte-thm-Fell-bundle}).

Let \(c\colon G\to \R\) be a \(1\)\nb-cocycle and let \(Z\) be the associated boundary set. 
For \(\gamma\in G(Z)\), the fibre of the restricted Fell bundle is \((r^*\A|_{G(Z)})_\gamma=\A_{r(\gamma)}\).
Therefore, \(r^*\A|_{G(Z)}\) is precisely the Fell bundle associated to the
restricted groupoid dynamical system \((A|_Z,G(Z),\alpha|_{G(Z)} ).\)
Thus, on the dense subalgebra of compactly supported sections, we
have the canonical identification
\[
\Cc(G(Z);r^*\A|_{G(Z)})
\cong
\Cc(G(Z),\A|_Z).
\]
 Hence, by the
disintegration theorem for Fell bundles
\cite[Theorem 4.13]{Muhly-Williams2008Equivalence-and-disinte-thm-Fell-bundle}
and the disintegration theorem for groupoid crossed products
\cite[Theorem 7.12]{Muhly-Williams2008Renaults-equivalence-thm-for-gpd-cros-prd},
this identification extends to a canonical \(\Cst\)\nb-algebra isomorphism
\[
\Cst(G(Z);r^*\A|_{G(Z)})
\cong
A|_Z\rtimes_{\alpha|_{G(Z)}}G(Z).
\]
\begin{corollary}[Corollary of Theorem~\ref{thm-main-ground-st}]
	Let \((A,G, \alpha)\) be a groupoid dynamical system where \(G\) is a locally compact Hausdorff second countable \'etale groupoid and let \(c\colon G\to \R\) be a \(1\)\nb-cocycle. Suppose \(\sigma^c\) is the real dynamics on \(A\rtimes_{\alpha}G\) associated with~\(c\) as in Equation~\eqref{equ:real-dyna}. If \(Z\neq \emptyset\) and the boundary groupoid \(G(Z)\) is \'etale, then there is an affine homeomorphism between the state space of \(A|_Z\rtimes_{\alpha|_{G(Z)}}G(Z)\) and the \(\sigma^c\)\nb-ground state space of \(A\rtimes_{\alpha}G\). This assignment sends a state \(\varphi\) on \(A|_Z\rtimes_{\alpha|_{G(Z)}}G(Z)\) to a \(\sigma^c\)\nb-ground state \(\psi_{\varphi}\) on \(A\rtimes_{\alpha}G\) satisfying Equation~\eqref{eq-relation-state-ground}. If \(Z=\emptyset\), then \(A\rtimes_{\alpha}G\) has no \(\sigma^c\)\nb-ground states.
\end{corollary}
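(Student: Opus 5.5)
The plan is to reduce the statement directly to Theorem~\ref{thm-main-ground-st}, applied to the Fell bundle \(q\colon r^*\A\to G\). By~\cite[Example 2.1, Example 2.8]{Muhly-Williams2008Equivalence-and-disinte-thm-Fell-bundle}, \(r^*\A\) is a saturated Fell bundle over \(G\) and there is an isomorphism \(\Phi\colon A\rtimes_\alpha G\to \Cst(G;r^*\A)\). This isomorphism is the identity on the common dense subalgebra \(\Cc(G;r^*\A)\). It is worth checking that saturation holds: each fibre \(\A_{r(\gamma)}\) is a \(\Cst\)-algebra, and \(\A_{r(\gamma)}\A_{r(\gamma)}\) is dense in it. The first concrete step is to check that \(\Phi\) intertwines the two dynamics. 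On both sides \(\sigma^c_t\) acts on compactly supported sections by multiplication by \(\e^{itc(\gamma)}\), and \(\Phi\) is the identity on such sections. Hence \(\Phi\circ\sigma^c_t=\sigma^c_t\circ\Phi\) on a dense subalgebra, and therefore everywhere by continuity. It follows that \(\psi\mapsto\psi\circ\Phi\) is an affine weak\(^*\)-homeomorphism between the \(\sigma^c\)-ground state spaces. Ground states are defined purely in terms of the dynamics, so they are preserved under equivariant isomorphisms.

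Next, Theorem~\ref{thm-main-ground-st} gives an affine homeomorphism between the state space of \(\Cst(G(Z);r^*\A|_{G(Z)})\) and the ground state space of \(\Cst(G;r^*\A)\), via \(\psi_\varphi(f)=\varphi(f|_{G(Z)})\). It also says that no ground states exist when \(Z=\emptyset\). The hypotheses carry over unchanged, because the boundary set \(Z\) depends only on \(c\) and \(G\). To finish, I will compose with the isomorphism \(\Cst(G(Z);r^*\A|_{G(Z)})\cong A|_Z\rtimes_{\alpha|_{G(Z)}}G(Z)\) obtained just before the statement, and call this isomorphism \(\Psi\). It too is the identity on \(\Cc(G(Z);\A|_Z)\). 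Composing the three affine homeomorphisms yields the required one. For the formula: if \(f\in\Cc(G;r^*\A)\), then \(f|_{G(Z)}\) is the corresponding element of \(\Cc(G(Z),\A|_Z)\). So the composite sends a state \(\varphi\) on \(A|_Z\rtimes G(Z)\) to the ground state \(f\mapsto\varphi(f|_{G(Z)})\), which is Equation~\eqref{eq-relation-state-ground}.

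I expect the main obstacle to be justifying the identification \(\Psi\) at the level of full \(\Cst\)-norms, and not only on the dense \(^*\)-algebras. My first approach is the one sketched before the statement. By the Fell-bundle disintegration theorem~\cite[Theorem 4.13]{Muhly-Williams2008Equivalence-and-disinte-thm-Fell-bundle}, every pre-representation of \(\Cc(G(Z);r^*\A|_{G(Z)})\) is the integrated form of a strict representation \((\mu,\Hilm,\hat\pi)\). Restricting \(\hat\pi\) to the unit fibres \(\A_x\) and to the arrows produces a covariant representation of \((A|_Z,G(Z),\alpha)\). The converse passage uses~\cite[Theorem 7.12]{Muhly-Williams2008Renaults-equivalence-thm-for-gpd-cros-prd}. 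This matching shows that the two universal norms coincide. Alternatively, one can check directly that the Muhly--Williams isomorphism for \(G\) restricts compatibly to the closed invariant subgroupoid \(G(Z)\) via the restriction maps. Once \(\Psi\) is in hand, the rest is formal bookkeeping.
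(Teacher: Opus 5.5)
Your proposal is correct and is essentially the paper's argument: transport Theorem~\ref{thm-main-ground-st} along the Muhly--Williams isomorphism \(A\rtimes_\alpha G\cong\Cst(G;r^*\A)\), and identify \(\Cst(G(Z);r^*\A|_{G(Z)})\) with \(A|_Z\rtimes_{\alpha|_{G(Z)}}G(Z)\) via the two disintegration theorems; your equivariance and saturation checks just make explicit what the paper leaves implicit. One caution about your fallback route: \(Z\) is in general only \(c^{-1}(0)\)-invariant, not \(G\)-invariant (any arrow \(\gamma\) with \(c(\gamma)>0\) has \(r(\gamma)\notin Z\)), so \(f\mapsto f|_{G(Z)}\) is not a homomorphism on \(\Cc(G;r^*\A)\), and the simplest way to obtain \(\Psi\) is instead to apply Muhly--Williams' Example~2.8 directly to the restricted system \((A|_Z,G(Z),\alpha|_{G(Z)})\), whose pull-back bundle is exactly \(r^*\A|_{G(Z)}\).
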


\medskip
\paragraph{\itshape Acknowledgements:}
We are grateful to S. Sundar for many fruitful discussions, which led to this article.



\end{document}